\title{Small eigenvalues of the rough and Hodge Laplacians under fixed volume}
\author{\Large Colette Ann\'e and Junya Takahashi}
\date{31 January 2022}
\newtheorem{thm}{Theorem}[section]
\newtheorem{lem}[thm]{Lemma}
\newtheorem{cor}[thm]{Corollary}
\newtheorem{rem}[thm]{Remark}
\renewcommand{\labelenumi}{$(\arabic{enumi})$} 
\numberwithin{equation}{section}
\newcommand{\rest}{{\!}\upharpoonright} 
\newcommand{\dint}{\displaystyle \int}
\newcommand{\dlim}{\displaystyle \lim}
\newcommand{\dsum}{\displaystyle \sum}
\def\diam{\mathop{\mathrm{diam}}\nolimits}
\def\grad{\mathop{\mathrm{grad}}\nolimits}
\def\vol{\mathop{\mathrm{vol}}\nolimits}
\def\a{\mathop{\mathrm{\alpha}}\nolimits}
\def\b{\mathop{\mathrm{\beta}}\nolimits}
\def\e{\mathrm{\varepsilon}}
\def\vphi{\mathop{\mathrm{\varphi}}\nolimits}
\def\D{\mathop{\mathrm{\Delta}}\nolimits}
\def\N{\mathop{\mathrm{{\Bbb N}}}\nolimits}
\def\R{\mathop{\mathrm{{\Bbb R}}}\nolimits}
\renewcommand\S{\mathrm{\Bbb S}}
\def\D{\mathop{\mathrm{{\Bbb D}}}\nolimits}
\begin{document}

\maketitle

\begin{abstract}
 For each degree $p$ and each natural number $k\geq 1$, we construct on any closed manifold 
 a family of Riemannian metrics, with fixed volume such that the $k^{\text{th}}$ 
 positive eigenvalue of the rough or the Hodge Laplacian acting on differential 
 $p$-forms converge to zero. 
 In particular, on the sphere, we can choose these Riemannian metrics 
 as those of non-negative sectional curvature.
 This is a generalization of the results by Colbois and Maerten in 2010
 to the case of higher degree forms. \\

\noindent 
 {\bf R\'esum\'e.} 
 Pour chaque degr\'e $p$ et chaque entier naturel $k\geq 1$, nous construisons,
 sur toute vari\'et\'e compacte, une famille de m\'etriques riemanniennes \`a volume
 fix\'e telle que la $k^{\text{i\`eme}}$ valeur propre strictement positive du 
 Laplacien brut ou du Laplacien de Hodge agissant sur les formes diff\'erentielles 
 de degr\'e $p$ converge vers z\'ero.
 En particulier, sur la sph\`ere, nous pouvons choisir des m\'etriques \`a courbure 
 sectionnelle positive. 
 Ce r\'esultat g\'en\'eralise aux plus hauts degr\'es celui de Colbois et Maerten de 2010.
 \end{abstract}

\footnotetext{$2020$ \it{ Mathematics Subject Classification.} 
 Primary $58J50$; Secondary $35P15,$ $53C21$, $58C40$.
 {\it Key Words and Phrases.} 
 rough Laplacian, Hodge-Laplacian, differential forms, eigenvalues.
}



 \section{Introduction}

 We study the eigenvalue problems of two elliptic differential operators 
 acting on differential $p$-forms on a connected oriented closed Riemannian 
 manifold $(M^m,g)$ of dimension $m \ge 2$.

 One is the rough Laplacian $\overline{\Delta} = \nabla^* \nabla$,
 or the connection Laplacian, acting on $p$-forms on $(M,g)$, 
 where $\nabla$ is the covariant derivative induced from the Levi-Civita connection
 of the Riemannian metric $g$.
 The spectrum of the rough Laplacian consists only of {\bf non-negative} eigenvalues 
 with finite multiplicity.
 We denote its eigenvalues counted with multiplicity by 
\begin{equation*}
\begin{split}
  0 \le \overline{\lambda}^{(p)}_1(M,g) \le \overline{\lambda}^{(p)}_2(M,g) \le
    \cdots  \le \overline{\lambda}^{(p)}_k(M,g) \le \cdots.
\end{split}
\end{equation*}

 The other is the Hodge-Laplacian $\Delta = d \delta + \delta d$ acting on $p$-forms 
 on $(M,g)$, where $d$ is the exterior derivative and $\delta$ its formal adjoint 
 with respect to the $L^2$-inner product.
 The spectrum of the Hodge-Laplacian consists only of non-negative eigenvalues 
 with finite multiplicity.
 We also denote its {\bf positive} eigenvalues counted with multiplicity by 
\begin{equation*}
\begin{split}
  \underbrace{0 = \cdots = 0}_{b_p(M)} < {\lambda}^{(p)}_1(M,g) 
    \le {\lambda}^{(p)}_2(M,g) \le \cdots  \le {\lambda}^{(p)}_k(M,g)
    \le \cdots, 
\end{split}
\end{equation*}
 where the multiplicity of the eigenvalue $0$ is equal to the $p$-th Betti number
 $b_p(M)$ of $M$, by the Hodge-Kodaira-de Rham theory.
 In particular, it is independent of a choice of Riemannian metrics.

 Furthermore, since the Hodge-Laplacian $\Delta$ commutes with $d$ and $\delta$,
 we can define the $k$-th eigenvalues of the Hodge-Laplacian acting on 
 exact and co-exact $p$-forms, which are denoted by 
  $\lambda^{\prime (p)}_k (M,g)$ and $\lambda^{\prime \prime (p)}_k (M,g),$ 
 respectively.
 These are always positive.
 From the Hodge duality, it follows that for any degree $p$
\begin{equation} \label{eq:Hodge-duality}
\begin{split}
   \lambda^{\prime (p)}_k(M,g)
     &= \lambda^{\prime \prime (m-p)}_k(M,g)
\end{split}
\end{equation}
 for $k =1,2, \dots$.
 In particular, we see that
\begin{equation} \label{eq:first-ev-duality}
\begin{split}
 \lambda^{(p)}_1(M,g)
   &= \min \{ \lambda^{\prime (p)}_1(M,g), \ \lambda^{\prime \prime (p)}_1(M,g) \} \\
   &= \min \{ \lambda^{\prime (p)}_1(M,g), \ \lambda^{\prime (m-p)}_1(M,g) \}.
\end{split}
\end{equation}

 We are interested in the supremum and the infimum of the $k$-th eigenvalues
 under all Riemannian metrics with fixed volume on $M$. 
 Colbois and Dodziuk \cite{Colbois-Dodziuk[94]} proved that 
 there exists no universal upper bound of the first positive eigenvalue of 
 the Laplacian acting on functions under fixed volume. 
 Similar results to the rough-Laplacian acting on $p$-forms with $1 \le p \le m-1$ 
 were proved by Colbois and Maerten \cite[Theorem $1.1$]{Colbois-Maerten[10]}, 
 and to the Hodge-Laplacian acting on $p$-forms for $2 \le p \le m-2$ 
 by Gentile and Pagliara \cite{Gentile-Pagliara[95]}. 
 But, the case of $p=1, m-1$ is still unknown (cf.\ \cite{Tanno[83]}, \cite{Gentile[99]}).

 There exists no positive universal lower bound of the first positive eigenvalue of 
 the Laplacian acting on functions, if we deform a Riemannian manifold to a dumbbell 
 under fixed volume, which is called the Cheeger dumbbell \cite{Cheeger[70]}. 
 Similar results to the rough Laplacian acting on $p$-forms with $p = 0, 1, m-1, m$ 
 were also proved by Colbois and Maerten \cite[Theorem $1.2$]{Colbois-Maerten[10]}. 
 On any connected oriented closed manifold $M$ of dimension $m \ge 3,$ 
 there exists a one-parameter family of Riemannian metrics $\overline{g}_L$
 with volume one such that for $p = 0, 1, m-1, m$ and for any $k \ge 1,$  
\begin{equation*}
\begin{split}
  \overline{\lambda}^{(p)}_k (M, \overline{g}_L) &\longrightarrow 0
   \  \text{ as } \  L \longrightarrow \infty.
\end{split}
\end{equation*}

 In the present paper, we prove similar results in the case of all degree $p$
 with $1 \le p \le m-1.$ We also prove them in the case of the Hodge-Laplacian 
 for all degree $p$ with $1 \le p \le m-1$. In the same way as Colbois and Dodziuk
 \cite{Colbois-Dodziuk[94]} who deduced their result from that on the spheres,
 for the odd dimensional spheres by Tanno \cite{Tanno[79]}, Bleecker \cite{Bleecker[83]},
 for the even dimensional spheres by Muto \cite{Muto[80]},
 using a spectral analysis of connected sums,
  we first construct such a family of Riemannian metrics with non-negative 
 sectional curvature on the $m$-dimensional standard sphere ${\Bbb S}^m$. 
 More precisely, 

\begin{thm}\label{thm:sphere}
 For $m \ge 2$ and a given degree $p$ with $1 \le p \le m-1,$ 
 there exists a one-parameter family of Riemannian metrics $\overline{g}_{p, L}$ 
 on the $m$-dimensional standard sphere ${\Bbb S}^m$  with volume one and 
 non-negative sectional curvature such that for any integer $k \ge 1$, 
\begin{enumerate}
 \item  $\ \overline{\lambda}^{(p)}_k({\Bbb S}^m, \overline{g}_{p,L}) 
           \longrightarrow  0; $     
 \item  $\ \lambda^{\prime \prime (p)}_k({\Bbb S}^m, \overline{g}_{p,L}) 
           \longrightarrow  0,$
\end{enumerate}
 as $L \longrightarrow \infty$.
\end{thm}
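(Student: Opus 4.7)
My plan is to realize $\SS^m$ as a doubly-warped product with a long cylindrical middle piece of length $L$, and test the min-max with co-closed $p$-forms supported in this piece. \textbf{Step 1 (the metric).} I view $\SS^m$ as $[0,T_L]\times \SS^p \times \SS^{m-p-1}$ with $\SS^{m-p-1}$ collapsed at $r=0$ and $\SS^p$ collapsed at $r=T_L$, and equip it with $g_{p,L}=dr^2+f(r)^2 g_{\SS^p}+h(r)^2 g_{\SS^{m-p-1}}$, where the smooth profiles $f,h$ are chosen so that $h(0)=0$, $h'(0)=1$ and $f(T_L)=0$, $f'(T_L)=-1$ (smooth closing at the caps), $f\equiv f_0$, $h\equiv h_0$ are constant on a middle interval $[a,a+L]$, and, pointwise everywhere, $f$ and $h$ are concave with $|f'|,|h'|\le 1$ and $f'h'\le 0$. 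The O'Neill formulas for the sectional curvatures of a doubly-warped product involve exactly the four/five natural families of $2$-planes, each controlled by one of the listed pointwise conditions, so $\sec\ge 0$ throughout. Rescaling $g_{p,L}$ by a constant to normalize its volume to $1$ multiplies each eigenvalue by a factor $(\vol g_{p,L})^{2/m}=O(L^{2/m})$.

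\textbf{Step 2 (co-closed test forms).} On the cylindrical neck $[a,a+L]\times\SS^p\times\SS^{m-p-1}$ the metric is a Riemannian product and the pulled-back volume form $\omega_{\SS^p}$ of the round $\SS^p$ is parallel for the Levi-Civita connection. For any $\phi\in C^\infty_c(a,a+L)$, the $p$-form $\omega_\phi:=\phi(r)\,\omega_{\SS^p}$ extends by zero to a smooth form on $\SS^m$, and a direct pointwise computation on the neck yields
\begin{equation*}
 d\omega_\phi=\phi'(r)\,dr\wedge\omega_{\SS^p},\qquad \delta\omega_\phi=0,\qquad \nabla\omega_\phi=\phi'(r)\,dr\otimes\omega_{\SS^p}.
\end{equation*}
Consequently the Hodge and rough Rayleigh quotients of $\omega_\phi$ coincide and reduce to the one-dimensional quotient $\int(\phi')^2 dr/\int \phi^2 dr$, all warping weights cancelling.

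\textbf{Step 3 (min-max).} Partition $[a,a+L]$ into $k$ equal sub-intervals and pick smooth bumps $\phi_1,\dots,\phi_k$ supported in consecutive sub-intervals, each with Rayleigh quotient bounded by $ck^2/L^2$ for a universal constant $c$. The corresponding forms $\omega_{\phi_j}$ are pairwise $L^2$-orthogonal (disjoint supports), all lie in $\ker\delta$, and since $b_p(\SS^m)=0$ for $1\le p\le m-1$ they are automatically co-exact. Applying the min-max principle to this $k$-dimensional subspace gives
\begin{equation*}
 \overline{\lambda}^{(p)}_k(\SS^m,g_{p,L}),\ \lambda^{\prime\prime(p)}_k(\SS^m,g_{p,L}) \ \le \ \frac{ck^2}{L^2},
\end{equation*}
which, after the volume normalization, becomes $O(k^2 L^{2/m-2})$ and tends to $0$ as $L\to\infty$, since $2/m-2<0$ whenever $m\ge 2$.

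\textbf{Main obstacle.} The spectral part of the argument is straightforward; the delicate point is the explicit construction of the profiles $f,h$ that are smooth across the junctions between the caps and the cylindrical neck while simultaneously preserving the concavity $f'',h''\le 0$, the slope bounds $|f'|,|h'|\le 1$, and the opposite-sign condition $f'h'\le 0$. Smoothening a piecewise profile obtained by grafting a constant middle piece onto appropriately scaled portions of $\sin r$ and $\cos r$ achieves this. In the extremal case $p=m-1$ (or $p=1$ when $m=2$) the factor $\SS^{m-p-1}$ is zero-dimensional; one then replaces the doubly-warped ansatz by the simply-warped metric $dr^2+f(r)^2 g_{\SS^{m-1}}$ with a long constant middle piece, and the same computation with $\omega_\phi=\phi(r)\omega_{\SS^{m-1}}$ carries through, so the conclusion covers the full range $1\le p\le m-1$.
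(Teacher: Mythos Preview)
Your proposal is correct and follows essentially the same route as the paper: realize $\SS^m$ via the join $\SS^p * \SS^{m-p-1}$ with a long product cylinder $\SS^p\times[a,a+L]\times\SS^{m-p-1}$ in the middle, and test the min-max with $\phi(r)\,\omega_{\SS^p}$ for bump functions $\phi$ on disjoint subintervals. The computation that these forms are co-closed, that the Rayleigh quotients reduce to $\int(\phi')^2/\int\phi^2$, and the volume-normalization count $O(L^{2/m-2})$ are all exactly as in the paper.

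The one notable difference is in how the metric is packaged and how non-negative curvature is checked. You use a global doubly-warped ansatz $dr^2+f^2g_{\SS^p}+h^2g_{\SS^{m-p-1}}$ and must control the mixed plane $K(X,Y)=-f'h'/(fh)$, which is why you impose $f'h'\le0$ and flag the profile construction as the ``main obstacle.'' The paper instead writes $\SS^m$ as the union of two genuine Riemannian products $\SS^p\times\D^{m-p}_L$ and $\D^{p+1}\times\SS^{m-p-1}$, where each disk carries a singly-warped metric $dr^2+f(r)^2 g_{\S^{n-1}}$ (with $f''\le0$, $0\le f'\le1$) that is cylindrical near the boundary. In doubly-warped language this is the special case where $f\equiv1$ on one half and $h\equiv1$ on the other, so $f'h'\equiv0$ identically and the mixed curvature term vanishes outright; the curvature check then reduces to the standard two inequalities for a single warped product. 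This makes the profile construction and the curvature verification somewhat cleaner, but the underlying geometry and the spectral argument are the same as yours.
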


 Furthermore, we give lower bounds of the eigenvalues of the Hodge-Laplacian
 acting on $p$-forms on ${\Bbb S}^m$ 
 in Theorem \ref{thm:lower-bdd} and Corollary $\ref{cor:lower-bdd}$. 

\vspace{0.2cm}

 Next, by gluing this sphere with any closed manifold, 
 we obtain the same result as Theorem \ref{thm:sphere} for any closed manifold, 
 without keeping non-negative sectional curvature.

\begin{thm} \label{thm:main-thm}
 Let $M^m$ be a connected oriented closed manifold of dimension $m \ge 2.$
 For any fixed degree $p$ with $1 \le p \le m-1$, any integer $k \geq 1$ and 
 for any $\e >0$, there exists a Riemannian metric $\overline{g}_{p,\e}$ on $M$
 with volume one such that
\begin{equation*}
\begin{split}
  0 < \overline{\lambda}^{(p)}_k (M, \overline{g}_{p,\e}) &< \e 
    \quad  \text{ and } \quad
  \lambda^{\prime \prime (p)}_k (M, \overline{g}_{p,\e}) < \e.
\end{split}
\end{equation*}
\end{thm}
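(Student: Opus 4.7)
The plan is to reduce to Theorem \ref{thm:sphere} via a connected sum construction: since $M \# \mathbb{S}^m$ is diffeomorphic to $M$, the small sphere eigenvalues can be transplanted to $M$.

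Concretely, given $\varepsilon > 0$, Theorem \ref{thm:sphere} provides an $L$ large enough that $(\mathbb{S}^m, \overline{g}_{p,L})$ has both $\overline{\lambda}^{(p)}_k$ and $\lambda^{\prime\prime (p)}_k$ less than $\varepsilon/2$. Fix any auxiliary metric $g_0$ on $M$ and choose points $x_0 \in \mathbb{S}^m$, $y_0 \in M$. For small $r > 0$, excise the geodesic balls $B(x_0, r)$ and $B(y_0, r)$ and glue the resulting boundaries along a short thin cylindrical neck; the result is diffeomorphic to $M$ and, after a global homothety close to $1$, has volume one. By choosing $r$ small, the $M$-part and the neck together contribute arbitrarily little volume, so the sphere part remains essentially undisturbed.

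For the rough Laplacian, I apply the min-max principle. Let $\omega_1, \dots, \omega_k$ be $L^2$-orthonormal eigenforms on $(\mathbb{S}^m, \overline{g}_{p,L})$ realising the first $k$ eigenvalues, and let $\chi_r$ be a cut-off vanishing on $B(x_0, r)$, equal to $1$ outside $B(x_0, 2r)$, and with $|\nabla \chi_r| \leq C/r$ (a logarithmic cut-off must be used if $m=2$). Extended by zero across the neck to the $M$-side, the forms $\chi_r \omega_j$ span a $k$-dimensional test space. Dominated convergence gives $\|\chi_r \omega_j\|^2 \to \|\omega_j\|^2$ and $\|\chi_r \nabla \omega_j\|^2 \to \|\nabla \omega_j\|^2$ as $r \to 0$, while the cut-off error
\[
   \|d\chi_r \otimes \omega_j\|^2 \ \leq \ \frac{C^2}{r^2}\int_{B(x_0,2r)} |\omega_j|^2
\]
tends to zero since each $\omega_j$ is smooth. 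Hence the Rayleigh quotients of the test space converge to the sphere eigenvalues, yielding $\overline{\lambda}^{(p)}_k(M, \overline{g}_{p,\varepsilon}) < \varepsilon$. For the co-exact Hodge spectrum, I exploit the $d$-induced bijection between co-exact $p$-form eigenspaces and exact $(p+1)$-form eigenspaces, i.e.\ $\lambda^{\prime\prime (p)}_k = \lambda^{\prime (p+1)}_k$: taking the co-exact eigenforms $\alpha_j$ on the sphere, the cut-offs $d(\chi_r \alpha_j)$ are automatically exact $(p+1)$-forms on the glued manifold, and the parallel estimate yields the corresponding eigenvalue bound.

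The main obstacle is handling all perturbations in the correct order of limits: one first fixes $L$ and the finite collection of eigenforms, and only then shrinks $r$ and the neck so that the cut-off errors, the metric modification across the neck, and the renormalisation to unit volume together contribute less than $\varepsilon/2$. The strict positivity $\overline{\lambda}^{(p)}_k(M, \overline{g}_{p,\varepsilon}) > 0$ is automatic once the constructed metric admits no nontrivial parallel $p$-forms, which can be ensured generically by an arbitrarily small perturbation of $g_0$ not affecting the upper bound.
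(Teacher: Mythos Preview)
Your overall strategy---transplanting the small sphere eigenvalues to $M$ via the connected sum $M\cong M\#\S^m$ and a cut-off min-max argument---is exactly the paper's route. For the rough Laplacian your argument is essentially Lemma~\ref{lem:upper-bound}: cut off the first $k$ eigenforms of $(\S^m,\overline g_{p,L})$, extend by zero, and let the excision radius go to zero. That part is fine.

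The gap is in the Hodge co-exact step. You propose to test $\lambda'^{(p+1)}_k=\lambda''^{(p)}_k$ with the exact forms $\psi_j:=d(\chi_r\alpha_j)$ and claim ``the parallel estimate'' suffices. But the Rayleigh quotient on exact $(p{+}1)$-forms is $\|\delta\psi\|^2/\|\psi\|^2$, so you must control
\[
\delta d(\chi_r\alpha_j)=\chi_r\,\delta d\alpha_j+\text{(error)},
\]
where the error involves \emph{second} derivatives of $\chi_r$ (through $\delta(d\chi_r\wedge\alpha_j)$ and $d(i_{\nabla\chi_r}\alpha_j)$). With a cut-off satisfying $|\nabla\chi_r|\le C/r$ one has $|\nabla^2\chi_r|\sim r^{-2}$ on an annulus of volume $\sim r^m$, so the error contributes $\sim r^{m-4}$ to the numerator; even with the logarithmic cut-off this blows up for $m=2,3$. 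Equivalently, you cannot run the first-order min-max $\|d\beta\|^2/\|\beta\|^2$ directly because your test forms $\chi_r\alpha_j$ are \emph{not} co-closed ($\delta(\chi_r\alpha_j)=-i_{\nabla\chi_r}\alpha_j\neq 0$), and projecting them onto the co-closed subspace of the glued manifold is metric-dependent and not obviously controlled.

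The paper sidesteps this entirely by invoking the convergence result for $\lambda''^{(p)}_k$ under collapsing of connected sums (Lemma~\ref{lem:p-conv}, proved in \cite{p-conv[12]}). A more elementary fix, closer in spirit to your argument, is to recall that the small co-exact eigenvalues on $(\S^m,\overline g_{p,L})$ were obtained in Lemma~\ref{lem:k-ev-estimate} not from eigenforms but from the explicit test forms $\varphi_i=f_i\,v_p$ of \eqref{eq:test-forms}, which are already compactly supported in the cylindrical part $\S^p\times[2,L+2]\times\S^{m-p-1}$ and are genuinely co-closed. Perform the connected sum by excising a ball away from their supports (say in $H_2$); then the $\varphi_i$ transplant to $M$ without any cut-off, remain co-closed for the new metric, and give $\lambda''^{(p)}_k(M,\overline g_{p,\e})\le k^2\pi^2/L^2$ directly.
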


 We note that Riemannian metrics $\overline{g}_{p,L}$ and $\overline{g}_{p,\e}$
 in Theorems $\ref{thm:sphere}$ and $\ref{thm:main-thm}$ depend on the degree $p$. 
 But, by taking connected sums of $M$ and $(m-1)$ spheres at distinct $(m-1)$ points, 
 we obtain a Riemannian metrics $\overline{g}_{\e}$ on $M$ with small eigenvalues 
 for all $p =1,2,\dots,m-1.$

\begin{thm} \label{thm:unifomly-main}
 Let $M^m$ be a connected oriented closed manifold of dimension $m \ge 2.$
 For any $\e >0$ and any integer $k \geq 1$, there exists a Riemannian metric 
 $\overline{g}_{\e}$ on $M$ with volume one such that for any degree $p$ 
 with $1 \le p \le m-1$,
\begin{equation*}
\begin{split}
  0 < \overline{\lambda}^{(p)}_k (M, \overline{g}_{\e}) &< \e 
    \quad  \text{ and } \quad
  \lambda^{\prime \prime (p)}_k (M, \overline{g}_{\e}) < \e.
\end{split}
\end{equation*}
\end{thm}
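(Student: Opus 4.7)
The idea is to combine the single-degree constructions of Theorem~\ref{thm:main-thm} for all $m-1$ degrees at once, exploiting the freedom to perform $m-1$ connected sums at pairwise disjoint points of $M$, so that the small eigenforms associated with different degrees live in geometrically separated regions.

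First I would fix $m-1$ pairwise distinct points $q_1,\dots,q_{m-1}\in M$ together with pairwise disjoint small balls $B_i\ni q_i$ in a fixed background metric. For each degree $p\in\{1,\dots,m-1\}$, Theorem~\ref{thm:sphere} supplies a metric $\overline{g}_{p,L}$ of volume one on $\S^m$ for which $\overline{\lambda}^{(p)}_k$ and $\lambda^{\prime\prime(p)}_k$ tend to zero as $L\to\infty$; inspection of the construction shows the small eigenforms can be localized away from a chosen basepoint $\ast_p\in\S^m$. I would then glue each $(\S^m,\overline{g}_{p,L_p})$ onto $M$ at $q_p$ through a thin cylindrical neck, using the same connected-sum recipe already invoked in Theorem~\ref{thm:main-thm}, and rescale the resulting metric to unit volume, calling it $\overline{g}_\e$.

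To bound the $k$-th eigenvalue in a given degree $p$, I would pull the $k$ small co-exact eigenforms of $(\S^m,\overline{g}_{p,L_p})$ back to $(M,\overline{g}_\e)$ via a cut-off that vanishes in the $p$-th neck and extend them by zero. Since their supports lie in the $p$-th sphere piece and the neck has small volume and controlled geometry, their Rayleigh quotients for both $\overline{\Delta}$ and $\Delta$ differ from those on the sphere by a vanishing error. The min--max principle then yields
\begin{equation*}
\overline{\lambda}^{(p)}_k(M,\overline{g}_\e)<\e
\quad\text{and}\quad
\lambda^{(p)}_k(M,\overline{g}_\e)<\e,
\end{equation*}
simultaneously for all $p\in\{1,\dots,m-1\}$ once the neck parameters are chosen to accommodate the most demanding of the $m-1$ individual estimates. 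The strict positivity $\overline{\lambda}^{(p)}_k>0$ is handled exactly as in the single-degree case of Theorem~\ref{thm:main-thm}.

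The principal technical point is that a cut-off of a co-exact form is generally not co-exact, so the bound on $\lambda^{\prime\prime(p)}_k$ requires either a Hodge projection of the test forms onto the co-closed subspace of $(M,\overline{g}_\e)$ (with projection error controlled by the cut-off derivative and the neck volume) or the min--max characterization of $\lambda^{\prime\prime(p)}_k$ on the quotient of closed modulo exact $p$-forms; both routes are the same argument as in Theorem~\ref{thm:main-thm}. The new ingredient here is the disjointness of the $m-1$ necks, which makes the $m-1$ test-form families decouple from one another and from the topology of $M$. The delicate step I would expect to check most carefully is that a single choice of neck parameters, compatible with a common rescaling to volume one, drives all $2(m-1)$ eigenvalue bounds simultaneously below $\e$; this is mainly a bookkeeping issue since the individual estimates are already uniform in the neck parameters used in Theorem~\ref{thm:main-thm}.
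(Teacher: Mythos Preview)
Your proposal is correct and follows essentially the same route as the paper: perform $m-1$ collapsing connected sums of $M$ with the spheres $(\S^m,\overline{g}_{p,L})$, $p=1,\dots,m-1$, at pairwise disjoint points, transplant the test forms from each sphere via cut-offs, apply the min--max principle, and normalize the volume. The only cosmetic differences are that the paper takes a single $L>\max_p L_p$ rather than degree-dependent $L_p$, and it phrases the gluing as scaling $M$ by $\e^2$ (so that $M$ collapses while the spheres remain at unit size) rather than as attaching through ``thin necks''; your remark on the co-exactness of the cut-off test forms is in fact more explicit than the paper's treatment, which simply invokes the convergence Lemma~\ref{lem:p-conv}/\ref{lem:upper-bound} for each component.
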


\begin{rem}
\begin{enumerate}
 \renewcommand{\labelenumi}{$(\roman{enumi})$}
 \item  In the case of $m=2$, Theorem $\ref{thm:main-thm}$ is covered with the result 
 by Colbois and Maerten \cite{Colbois-Maerten[10]}. 
 \item  The same results for the Hodge-Laplacian were obtained from the
  results by Guerini \cite{Guerini[04]} and Jammes \cite{Jammes[08]}, \cite{Jammes[11]}.
  Although the sectional curvature for their Riemannian metrics on ${\Bbb S}^m$ 
  diverges to $- \infty$, our Riemannian metric constructed in 
  Theorem $\ref{thm:sphere}$ has non-negative sectional curvature. 
  This is our advantage.
\item  As a consequence of our result on spheres, Theorem $\ref{thm:sphere}$, 
  their exists no lower bound of the positive eigenvalue of degree $p$ with 
  $1 \leq p \leq m-1$ depending only on the dimension, the volume and a lower bound
  of the sectional curvature. See Remark $\ref{conj:lott}$ below.
 \item  Jammes \cite{Jammes[08]} constructed similar Riemannian metrics within
 a fixed conformal class for $m \ge 5$, except for $p= \frac{m}{2}$ if $m$ is even.
\end{enumerate}
\end{rem}

 The present paper is organized as follows:
 In Section $2$, we recall the Weizenb\"ock formula and the properties 
 of parallel forms.
 In Section $3$, we consider the case of the sphere, 
 and give the proof of Theorem \ref{thm:sphere}.
 In Section $4$, we give lower bounds for the eigenvalues of 
 the Hodge-Laplacian on the sphere.
 In Section $5$, we consider the case of a general manifold, 
 and give the proof of Theorems \ref{thm:main-thm} and \ref{thm:unifomly-main}.
 In Section $6$, as an appendix, we prove the convergence theorem
 of the eigenvalues of the rough Laplacian acting on $p$-forms, 
 when one side of a connected sum of two closed Riemannian manifolds
 collapses to a point.

 \vspace{0.5cm}

\noindent
 {\bf Acknowledgement.}
 We thank the referee for the interest on our work, valuable improvement and the idea of 
 Theorem \ref{thm:unifomly-main}.
 The second named author is supported by the Grants-in-Aid for Scientific Research (C),
 Japan Society for the Promotion of Science, No.\ 16K05117.



\section{Notations and basic facts}

 We fix the notations used in the present paper.
 Let $(M^m,g)$ be a connected oriented closed Riemannian manifold of dimension $m \ge 2.$
 The metric $g$ defines a volume element $d \mu_g$ and a scalar product on the fibers of
 any tensor bundle. The $L^2$-inner product of the space of all smooth $p$-forms $\Omega^p(M)$ 
 is defined as, for any $p$-form $\vphi, \psi$ on $M$
\begin{equation*} \label{eq:L^2-metric}
\begin{split}
  ( \vphi, \psi )_{L^2(M,g)} :&= \dint_M \langle \vphi, \psi \rangle d \mu_g
   \quad  \text{ and }  \quad
  \| \vphi \|^2 _{L^2(M,g)} := ( \vphi, \vphi )_{L^2(M,g)}.
\end{split}
\end{equation*}
 The space of $L^2$ $p$-forms $L^2 (\Lambda^p M,g)$ is the completion of 
 $\Omega^p (M)$ with respect to this $L^2$-norm.

  For a positive constant $a >0$, it is easy to see that 
\begin{equation} \label{eq:scaling-property}
\begin{split}
  \overline{\lambda}^{(p)}_k (M, a g)
      &= a^{-1} \overline{\lambda}^{(p)}_k(M, g),  \quad
  \lambda^{(p)}_k (M, a g) = a^{-1} \lambda^{(p)}_k (M, g),  \\
  \vol (M, ag) &=  a^{\frac{m}{2}} \vol (M, g),
\end{split}
\end{equation}
 where $\vol (M,g)$ denotes the volume of $(M,g).$
 Thus, if we take a new Riemannian metric 
\begin{equation} \label{eq:normalized-metric}
\begin{split}
   \overline{g} &:= \vol (M,g)^{- \frac{2}{m}} \, g,
\end{split}
\end{equation}
 then the volume is one: $\vol (M, \overline{g}) = 1.$
 Therefore, instead of considering a volume normalized metric, 
 we may consider the following invariants
\begin{equation*}
\begin{split}
  \overline{\lambda}^{(p)}_k (M,g) \vol(M,g)^{\frac{2}{m}},  \quad
  \lambda^{(p)}_k (M,g) \vol(M,g)^{\frac{2}{m}}.
\end{split}
\end{equation*}

The relation between the rough  and Hodge Laplacians  on $p$-forms
 is given by the Weitzenb\"ock formula: for any $p$-form $\vphi$ on $M$,
\begin{equation} \label{eq:Weitzenbock-formula}
\begin{split}
   \Delta \vphi &= \overline{\Delta} \vphi + F_p (\vphi), \\
\end{split}
\end{equation}
 where $F_p$ is the Weitzenb\"ock curvature tensor defined as 
\begin{equation} \label{eq:Weitzenbock-curvature}
\begin{split}
   F_p (\vphi) &= - \dsum^m_{i,j=1} e^i \wedge i_{e_j} ( R(e_i,e_j) \vphi),
\end{split}
\end{equation}
 where $R$ denotes the curvature tensor with respect to the covariant derivative
 induced from the Levi-Civita connection and $i_X$ denotes the interior product of 
 a vector $X$, and $\{ e_1, \dots, e_m \}$ is a local orthonormal frame and
 $\{ e^1, \dots, e^m \}$ is its dual frame.

 Now, after taking the scalar product of $\vphi$ with \eqref{eq:Weitzenbock-formula}, 
 we obtain the Bochner formula: for each point on $M$, 
\begin{equation} \label{eq:pointwise-Weitzenbock-formula}
\begin{split}
 \dfrac{1}{2} \Delta (|\vphi|^2)
  &= - | \nabla \vphi |^2 + \langle \overline{\Delta} \vphi, \vphi \rangle\\
  &= \langle \Delta \vphi, \vphi \rangle
   - | \nabla \vphi |^2 - \langle F_p (\vphi), \vphi \rangle.
\end{split}
\end{equation}

 By the Hodge-Kodaira-de Rham theory, the kernel of the Hodge-Laplacian 
acting on $p$-forms consists of harmonic $p$-forms, whose dimension is 
equal to the $p$-th Betti number of $M$, that is, a topological invariant of $M$.
 In contract, the kernel of the rough Laplacian acting on $p$-forms 
 consists of parallel $p$-forms, whose dimension is not a topological invariant.
 In fact, we can kill all parallel $p$-forms  under local perturbation 
 of a Riemannian metric.

\begin{lem} \label{lem:vanishing-parallel}
 Let $(M,g)$ be a connected oriented closed Riemannian manifold.
 If $F_p$ is positive definite at one point, there exist
 no non-zero parallel $p$-forms on $(M,g)$.
\end{lem}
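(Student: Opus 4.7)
The plan is to exploit the algebraic consequences of the parallel condition via the Weitzenb\"ock formula \eqref{eq:Weitzenbock-formula}. Suppose $\varphi$ is a parallel $p$-form, that is $\nabla \varphi \equiv 0$ on $M$. I first observe two elementary consequences: the pointwise norm $|\varphi|^2$ is constant on $M$ (since $d|\varphi|^2 = 2\langle \nabla \varphi, \varphi\rangle = 0$ and $M$ is connected), and both $d\varphi = 0$ and $\delta \varphi = 0$, because $d\varphi = \sum_i e^i \wedge \nabla_{e_i}\varphi$ and $\delta \varphi = -\sum_i i_{e_i} \nabla_{e_i}\varphi$ vanish termwise. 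In particular $\Delta \varphi = 0$ on all of $M$, and also $\overline{\Delta}\varphi = \nabla^*\nabla \varphi = 0$.

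Substituting these into the Weitzenb\"ock formula \eqref{eq:Weitzenbock-formula} gives, pointwise on $M$,
\begin{equation*}
0 = \Delta \varphi = \overline{\Delta}\varphi + F_p(\varphi) = F_p(\varphi).
\end{equation*}
In particular, at the distinguished point $x_0 \in M$ where $F_p$ is positive definite, the identity $F_p(\varphi)_{x_0} = 0$ together with $\langle F_p(\varphi), \varphi\rangle_{x_0} = 0$ forces $\varphi(x_0) = 0$. Combined with the constancy of $|\varphi|^2$, this yields $\varphi \equiv 0$ on $M$, as required.

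There is really no obstacle here: the argument is a direct pointwise/algebraic use of the Weitzenb\"ock identity once one notices that a parallel form is automatically both closed and co-closed. The only mild subtlety worth flagging is the connectedness of $M$, which is needed to propagate $\varphi(x_0) = 0$ to the whole manifold via the constancy of $|\varphi|^2$; this is already built into the hypotheses.
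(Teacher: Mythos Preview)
Your proof is correct and follows essentially the same idea as the paper's: a parallel form has constant norm and is harmonic, so the Weitzenb\"ock identity forces $\langle F_p(\varphi),\varphi\rangle=0$ everywhere, which at the point where $F_p$ is positive definite gives $\varphi=0$. The only cosmetic difference is that the paper phrases the argument via the pointwise Bochner formula \eqref{eq:pointwise-Weitzenbock-formula} as a contradiction, whereas you apply the operator-level formula \eqref{eq:Weitzenbock-formula} directly to conclude $F_p(\varphi)\equiv 0$; both routes are equivalent.
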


\begin{proof}
 We prove this by contradiction.
 Let $\vphi$ be a non-zero parallel $p$-form on $M$.
 Then, $\vphi$ is harmonic and of constant norm.
 By the assumption, there exists an open subset $U$ of $M$ such that
  $\langle F_p (\vphi), \vphi \rangle > 0$ on $U$. 
 From the Bochner formula on $U$
\begin{equation*} \eqref{eq:pointwise-Weitzenbock-formula}
\begin{split}
 \dfrac{1}{2} \Delta (|\vphi|^2) &= \langle \Delta \vphi, \vphi \rangle
   - | \nabla \vphi |^2 - \langle F_p (\vphi), \vphi \rangle,
\end{split}
\end{equation*}
 we have 
\begin{equation*}
\begin{split}
  0 = | \nabla \vphi |^2  &= - \langle F_p (\vphi), \vphi \rangle < 0,
\end{split}
\end{equation*}
 which is a contradiction.
\end{proof}

\begin{lem} \label{lem:local-perturbations}
  Let $(M^m,g)$ be a connected oriented closed Riemannian manifold.
 For any open subset $U$, there exists a Riemannian metric $g^{\prime}$ on $M$
 with $g^{\prime} = g$ on $M \setminus U$ such that 
 all parallel $p$-forms with respect to $g^{\prime}$ are zero.
\end{lem}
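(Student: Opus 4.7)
The plan is to reduce to Lemma~\ref{lem:vanishing-parallel}: I would construct a metric $g'$ agreeing with $g$ outside $U$ whose Weitzenb\"ock curvature $F_p$ is positive definite at some point of $U$, which by that lemma forces every parallel $p$-form of $g'$ to vanish. The key computation I would use is that for a round metric of constant sectional curvature $\kappa>0$, the formula \eqref{eq:Weitzenbock-curvature} reduces to $F_p = p(m-p)\kappa \cdot \Id$ on $p$-forms, which is positive definite for every degree $1\leq p\leq m-1$. So it is enough to arrange that $g'$ has constant positive sectional curvature on some open subset of $U$.

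To achieve this, I would pick a point $x_0 \in U$ and a radius $r>0$ small enough that the $g$-geodesic ball $B_{2r}(x_0)$ lies in $U$ and is the diffeomorphic image of a Euclidean ball $B_{2r}(0)\subset\R^m$ under the normal-coordinate chart at $x_0$. Via this chart I would pull back a round metric $g_\kappa$ of sectional curvature $\kappa>0$ (defined on all of $B_{2r}(0)$ once $r$ is chosen small enough relative to $\kappa$). With a smooth cut-off $\chi$ equal to $1$ on $B_r(0)$ and supported in $B_{2r}(0)$, I would then set
\begin{equation*}
 g' := (1-\chi)\,g + \chi \, g_\kappa
\end{equation*}
on $B_{2r}(x_0)$ and $g' := g$ on $M\setminus B_{2r}(x_0)$. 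Since convex combinations of positive definite symmetric $2$-tensors remain positive definite, $g'$ is a smooth Riemannian metric on $M$ with $g'=g$ off $U$. By construction $g' \equiv g_\kappa$ throughout $B_r(x_0)$, hence the Weitzenb\"ock curvature of $g'$ is positive definite on that open set, and Lemma~\ref{lem:vanishing-parallel} applies.

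The main obstacle I foresee is a soft one: producing a genuine smooth Riemannian metric that is rigidly spherical on a whole open set and equals $g$ outside a slightly larger set. Interpolating at the level of metrics via a convex combination handles this in one stroke, because positive definiteness is preserved under such combinations; a more delicate perturbative argument that merely adjusts the Taylor coefficients of $g$ at the single point $x_0$ would still be enough to invoke Lemma~\ref{lem:vanishing-parallel}, but is technically more cumbersome to implement smoothly.
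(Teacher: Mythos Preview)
Your proposal is correct and follows essentially the same route as the paper: pick a point $x_0\in U$, modify $g$ on a small ball so that the new metric has constant positive sectional curvature there, observe that $F_p=p(m-p)\kappa\cdot\Id>0$ at $x_0$, and invoke Lemma~\ref{lem:vanishing-parallel}. The only cosmetic differences are that the paper is terser about the actual gluing (it just says ``we deform the Riemannian metric'' without writing the convex combination) and that it reaches $F_p\ge p(m-p)$ via the Gallot--Meyer bound of $F_p$ by the curvature operator rather than by your direct computation.
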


\begin{proof}
 We take any point $x_0$ in any open subset $U$ of $M$.
 On a neighborhood of $x_0$, we deform the Riemannian metric $g$ to $g^{\prime}$ 
 such that $g^{\prime}$ has constant sectional curvature $1$.
 The curvature operator is also $1$ on this neighborhood of $x_0$
 (see \cite{Petersen[16]}, p.84, Proposition $3.1.3$).
 Since the Weitzenb\"ock curvature tensor $F_p$ is controlled below 
 by the curvature operator (see \cite{Gallot-Meyer[75]} p.264, Corollary $2.6$),
 we see that $F_p \ge p(m-p) > 0$ at $x_0$.
 Hence, from Lemma \ref{lem:vanishing-parallel}, we see that
 $(M, g^{\prime})$ has no non-zero parallel $p$-forms.
\end{proof}


\section{Small eigenvalues on the sphere ${\Bbb S}^m$}

We first consider the case of the $m$-dimensional standard sphere ${\Bbb S}^m$.

\noindent
 {\it Notations.}
 For a dimension $n$, Let $g_{{\S}^{n}}$ be the Riemannian metric on ${\Bbb S}^n$ 
 of constant sectional curvature one.
 We denote by $\D^n$ the $n$-dimensional closed disk, and let $g_{{\D}^{n}}$
  a fixed Riemannian metric on it, 
 which is identified with $[0,2] \times {\S}^{n-1}$, 
 of non-negative sectional curvature $K_{g_{{\D}^{n}}} \ge 0$. 
 We can, in addition, assume that $g_{{\D}^{n}}$ is a product metric near the boundary.
 In fact, if we take a smooth positive function $f(r)$ on the interval $[0,2]$ 
 satisfying that 
\begin{equation*}
\begin{split}
  f(r) &= 
 \begin{cases}
   \sin (r) &  \text{ on } [0,1], \\
   \ 1      &  \text{ on } [3/2,2],
 \end{cases}
\end{split}
\end{equation*}
 (Note that $\sin(1) \sim 0.84$.) 
 and $0 \le f^{\prime}(r) \le 1$ and $f^{\prime \prime}(r) \le 0,$
 then the metric $g_{{\D}^{n}}$ is written as 
\begin{equation}
\begin{split}
  g_{{\D}^{n}} &= dr^2 \oplus f^2(r) g_{\S^{n-1}} 
    \  \text{ on } [0,2] \times \S^{n-1}.
\end{split}
\end{equation}
 The sectional curvatures of $g_{{\D}^{n}}$ are given, 
 if $X$ and $Y$ are orthonormal vectors tangent to the angle directions, by 
\begin{equation*}
\begin{split}
  K(\partial_r,X) = - \dfrac{f^{\prime \prime}(r) }{f(r)}, \quad
  K(X,Y)=\dfrac{ 1 - ( f^{\prime}(r))^2 }{f^2(r)},
\end{split}
\end{equation*}
 both of which are non-negative (e.g., Petersen \cite{Petersen[16]}, {\bf 4.2.3}, p.121).
\begin{proof}[{\it Proof of Theorem \ref{thm:sphere}.}]
 We take any degree $p$ with $1 \le p \le m-1.$
 We consider the decomposition (see Figure 1) 
\begin{equation} \label{eq:sphere-decomp}
\begin{split}
 {\Bbb S}^m &= \big( {\Bbb S}^{p} \times {\Bbb D}^{m-p}_{L} \big) 
        \cup_{ {\Bbb S}^p \times {\Bbb S}^{m-p-1} } 
     \big( {\Bbb D}^{p+1} \times {\Bbb S}^{m-p-1} \big), 
\end{split}
\end{equation}
 and set 
\begin{equation*}
\begin{split}
  H_1 &:=  \S^{p} \times \D^{m-p}_{L}  \   \text{ and } \
  H_2 :=  \D^{p+1} \times \S^{m-p-1}.
\end{split}
\end{equation*}

\begin{figure}[H] \label{fig:sphere-docomposition} 
\begin{center}
\begin{tabular}{c}
\begin{minipage}{0.5\hsize}
 \begin{center}
 \begin{tikzpicture} [shift={(0,-1)}, scale=0.5]
  \draw (-3.5,0) .. controls (-3.5,2) 
    and (-1.5,2.5) .. (0,2.5);
  \draw [xscale=-1] (-3.5,0) .. controls (-3.5,2) 
    and (-1.5,2.5) .. (0,2.5);
  \draw [rotate=180] (-3.5,0) .. controls (-3.5,2) 
    and (-1.5,2.5) .. (0,2.5);
  \draw [yscale=-1] (-3.5,0) .. controls (-3.5,2) 
    and (-1.5,2.5) .. (0,2.5);
  \draw (-2,0.3) .. controls (-1.5,-0.3) 
    and (-1,-0.5) .. (0,-0.5) .. controls (1,-0.5) 
    and (1.5,-0.3) .. (2,0.3);
  \draw (-1.75,0) .. controls (-1.5,0.3) 
    and (-1,0.5) .. (0,0.5) .. controls (1,0.5) 
    and (1.5,0.3) .. (1.75,0);
  \draw [pattern=north west lines]
   (0,-2.5) .. controls (0.5,-2.0)
    and  (0.5,-1.0) .. (0,-0.5);
  \draw [dashed, pattern=north west lines]
   (0,-2.5) .. controls (-0.5,-2.0)
    and  (-0.5,-1.0) .. (0,-0.5);
  \node at (0.2,-3.5) {$\D^{m-p}_{L}$}; 
 \end{tikzpicture}
 \end{center}
\end{minipage}
 \hspace{-1.5cm} $\bigcup_{{\Bbb S}^p \times {\Bbb S}^{m-p-1}}$ 
 \hspace{-1.5cm} 
\begin{minipage}{0.5\hsize}
 \begin{center}
 \begin{tikzpicture}[shift={(0,-1)}, scale=0.5]
  \draw (0,-3.5) .. controls (-2,-3.5) 
    and (-2.5,-1.5) .. (-2.5,0);
  \draw [xscale=-1] (0,-3.5) .. controls (-2,-3.5) 
    and (-2.5,-1.5) .. (-2.5,0);
  \draw [rotate=180] (0,-3.5) .. controls (-2,-3.5) 
    and (-2.5,-1.5) .. (-2.5,0);
  \draw [yscale=-1] (0,-3.5) .. controls (-2,-3.5) 
    and (-2.5,-1.5) .. (-2.5,0);
  \draw (-0.3,-2) .. controls (0.3,-1.5) 
    and (0.5,-1) .. (0.5,0) .. controls (0.5,1) 
    and (0.3,1.5) .. (-0.3,2);
  \draw (0,-1.75) .. controls (-0.3,-1.5) 
    and (-0.5,-1) .. (-0.5,0) .. controls (-0.5,1) 
    and (-0.3,1.5) .. (0,1.75);
  \draw [dashed, pattern=north west lines]
   (2.5,0) .. controls (2.0,0.5)
    and  (1.0,0.5) .. (0.5,0);
  \draw [pattern=north west lines]
   (2.5,0) .. controls (2.0,-0.5)
    and  (1.0,-0.5) .. (0.5,0);
  \node at (3.8,0) {$\D^{p+1}$}; 
  \node at (0.2,-4.3) {$\ $};
\end{tikzpicture}
\end{center}
\end{minipage}
\end{tabular}
 \end{center}
 \caption{${\Bbb S}^m \cong \big( {\Bbb S}^p \times {\Bbb D}^{m-p}_L \big) 
   \cup_{{\Bbb S}^p \times {\Bbb S}^{m-p-1}} 
      \big( {\Bbb D}^{p+1} \times {\Bbb S}^{m-p-1} \big)$}
\end{figure}

 For any real number $L >0,$ we construct a one-parameter family of Riemannian 
 metrics $g_{p,L}$ on ${\Bbb S}^m$. First we introduce a one-parameter family of
 Riemannian metrics $g_{{\D}^{m-p},L}$ on 
 $\D^{m-p}$ containing a long cylinder as follows (see Figure $2$): 
\begin{equation*}
\begin{split}
  g_{{\D}^{m-p},L} &:= 
 \begin{cases}
   dr^2 \oplus f^2(r) g_{\S^{m-p-1}} &  \text{on } [0,2] \times \S^{m-p-1}, \\
   dr^2 \oplus g_{\S^{m-p-1}}  &  \text{on } [2,L+2] \times \S^{m-p-1}.
 \end{cases}
\end{split}
\end{equation*}

\begin{figure}[H] \label{fig:long-cylinder}
\begin{center}
\begin{tikzpicture} 
  \draw (-0.5,1) arc [radius=0.5, start angle=90, end angle=270]; 
  \draw[draw] (-0.5,1) -- (4.0,1);   
  \draw[draw] (-0.5,0) -- (4.0,0); %
  \node at (2.0,1.5) {${\Bbb D}^{m-p}_{L}$};
  \draw (-1,0) node[below] {$0$}; 
  \draw (0,0) node[below] {$2$}; 
  \draw (4.2,0) node[below] {$L+2$}; 
  \draw[<->] (0.4,-0.3) -- (3.5,-0.3);
  \node at (2,-0.6) {$L$};
  \draw (0,0) .. controls (0.3,0.2) 
    and (0.3,0.8) .. (0,1.0);   
  \draw [dotted] (0,0) .. controls (-0.3,0.3) 
    and (-0.3,0.8) .. (0,1.0);
  \draw (4,0) .. controls (4.3,0.3) 
    and (4.3,0.8) .. (4,1.0);
  \draw [dotted] (4,0) .. controls (3.7,0.3) 
    and (3.7,0.8) .. (4,1.0);
\end{tikzpicture}
   \vspace{-0.5cm}
\end{center}
   \caption{long disk ${\Bbb D}^{m-p}_{L}$}
\end{figure}

 Then, we define the smooth Riemannian metric $g_{p,L}$ on ${\Bbb S}^m$ as 
\begin{equation} \label{eq:metric-g_L}
\begin{split}
  g_{p,L} &:= 
 \begin{cases}
   g_{{\Bbb S}^p} \oplus g_{\D^{m-p},L}  &  \text{ on } 
        H_1 = {\Bbb S}^{p} \times \D^{m-p}_{L}, \\
   g_{\D^{p+1}} \oplus g_{\S^{m-p-1}}  &  \text{ on } 
        H_2 =  \D^{p+1} \times {\Bbb S}^{m-p-1}.
 \end{cases}
\end{split}
\end{equation}

 Since 
\begin{equation*}
\begin{split}
 \vol(\S^p \times \D^{m-p}_L )
  &= \vol(\S^p) \cdot \vol( \D^{m-p}_L ) \\
  &= \vol(\S^p) \cdot \Big\{ \vol(\D^{m-p}) 
         + \vol(\S^{m-p-1}) \cdot L \Big\}, 
\end{split}
\end{equation*}
 we can write for some constants $A, B >0$ independent of $L$ 
\begin{equation} \label{eq:vol-growth}
\begin{split}
   \vol (\S^m, g_{p,L}) &= \vol(\S^p \times \D^{m-p}_L ) 
        + \vol(\D^{p+1} \times \S^{m-p-1})  \\
    &= A L + B.
\end{split}
\end{equation}

 Next, we estimate the eigenvalues of the rough and Hodge Laplacians acting 
on $p$-forms from above.

\begin{lem} \label{lem:k-ev-estimate}
 For any integer $k \ge 1$ and any real number $L >0$, we have 
\begin{enumerate}
 \item  $\ \overline{\lambda}^{(p)}_k ({\Bbb S}^m,g_{p,L})  
            \le  \dfrac{k^2 \pi^2}{L^2};$    
 \item  $\ \lambda^{\prime \prime (p)}_k ({\Bbb S}^m,g_{p,L}) 
            \le  \dfrac{k^2 \pi^2}{L^2}.$
\end{enumerate}
\end{lem}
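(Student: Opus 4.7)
My plan is to apply the variational (min-max) characterization of both spectra, using the same $k$-parameter family of test forms supported in the long cylindrical piece $C_L := [2,L+2]\times\S^{m-p-1}$ sitting inside $H_1=\S^p\times \D^{m-p}_L$, where the metric is the Riemannian product $g_{\S^p}\oplus dr^{2}\oplus g_{\S^{m-p-1}}$.

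First I would introduce, for $j=1,\dots,k$, the Lipschitz cut-off $\varphi_j(r):=\sin\!\bigl(j\pi(r-2)/L\bigr)$ on $[2,L+2]$ and the test $p$-form
\begin{equation*}
\psi_j \;:=\; \varphi_j(r)\,\mathrm{vol}_{\S^p}\quad\text{on } \S^p\times C_L,
\end{equation*}
extended by zero to all of $\S^m$. Since $\varphi_j$ vanishes at the two endpoints of $[2,L+2]$, this extension lies in the form-Sobolev space $H^1(\Lambda^p \S^m)$, which is the natural form domain for both min-max problems. The $\psi_j$ are pairwise $L^2$-orthogonal because the $\varphi_j$ are the Dirichlet eigenfunctions on $[2,L+2]$.

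Next I would exploit the product structure. Because $\mathrm{vol}_{\S^p}$ is parallel on $\S^p$ and the cylinder direction is flat, $\nabla\psi_j=\varphi_j'(r)\,dr\otimes\mathrm{vol}_{\S^p}$, so $|\nabla\psi_j|^2=(\varphi_j'(r))^2$. Similarly $d\psi_j=\varphi_j'(r)\,dr\wedge\mathrm{vol}_{\S^p}$, hence $|d\psi_j|^2=(\varphi_j'(r))^2$. Applying $\ast$ in the product metric gives $\ast\psi_j=\pm\varphi_j(r)\,dr\wedge\mathrm{vol}_{\S^{m-p-1}}$, which is $d$-closed for purely dimensional reasons ($dr\wedge dr=0$, $d\,\mathrm{vol}_{\S^{m-p-1}}=0$, and $d\varphi_j=\varphi_j'\,dr$), so $\delta\psi_j=0$. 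A direct integration using Fubini then yields
\begin{equation*}
\frac{\|\nabla\psi_j\|^2_{L^2}}{\|\psi_j\|^2_{L^2}}\;=\;\frac{\|d\psi_j\|^2_{L^2}}{\|\psi_j\|^2_{L^2}}\;=\;\frac{\int_{2}^{L+2}(\varphi_j')^{2}\,dr}{\int_{2}^{L+2}\varphi_j^{2}\,dr}\;=\;\frac{j^{2}\pi^{2}}{L^{2}}.
\end{equation*}

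For the rough Laplacian, the standard min-max principle applied to the $k$-dimensional subspace $V_k:=\mathrm{span}\{\psi_1,\dots,\psi_k\}$ immediately gives $\overline\lambda^{(p)}_k(\S^m,g_{p,L})\le k^2\pi^2/L^2$. For the Hodge side, I first note that $\psi_j$ is $\delta$-closed, and because $H^p(\S^m;\R)=0$ for $1\le p\le m-1$ the space of harmonic $p$-forms is trivial, so every co-closed $p$-form is automatically co-exact; thus $V_k$ lies entirely in the co-exact part of $L^2(\Lambda^p\S^m)$. On this invariant subspace $\Delta$ reduces to $\delta d$ and $(\Delta\psi_j,\psi_j)=\|d\psi_j\|^2$, so the min-max characterization of $\lambda^{\prime\prime(p)}_k$ restricted to co-exact forms gives the same bound $k^2\pi^2/L^2$.

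The only subtle step is verifying that $V_k$ really sits inside the co-exact sector, but this follows from the combination of the direct computation $\delta\psi_j=0$ and the vanishing of $H^p(\S^m)$; the rest is a routine separation-of-variables Rayleigh-quotient computation on the product cylinder.
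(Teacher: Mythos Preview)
Your proof is correct and follows the same overall strategy as the paper: both use test $p$-forms of the shape $f(r)\,v_p$ supported in the cylindrical part of $H_1$, verify that these are co-closed (hence co-exact, since $b_p(\S^m)=0$), and feed them into the min-max principle for both Laplacians. The one difference is in the choice of the scalar profiles $f$. The paper subdivides $[2,L+2]$ into $k$ equal subintervals and on each takes the \emph{first} Dirichlet eigenfunction, obtaining $k$ test forms with pairwise \emph{disjoint supports}; this makes orthogonality for both the $L^2$-inner product and the energy quadratic form automatic, so the min-max bound $k^2\pi^2/L^2$ is immediate. You instead take the first $k$ Dirichlet eigenfunctions $\sin\bigl(j\pi(r-2)/L\bigr)$ on the full interval. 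These also diagonalize both quadratic forms simultaneously (the $\varphi_j$ are $L^2$-orthogonal and the $\varphi_j'$ are orthogonal cosines), so the maximum of the Rayleigh quotient over $V_k$ is again $\max_{1\le j\le k} j^2\pi^2/L^2=k^2\pi^2/L^2$; it would be worth stating this simultaneous diagonalization explicitly, since without it the min-max over $V_k$ does not reduce to $\max_j R(\psi_j)$. The paper's disjoint-support trick buys that step for free, but your version is equally valid and arguably more natural.
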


\begin{rem} \label{rem:no-zero-ev}
 We note that, for any metric, the rough and Hodge Laplacians acting 
 on $p$-forms of ${\Bbb S}^m$ for $1 \le p \le m-1$ have no $0$ eigenvalues.
 In fact, from $b_p(\S^m) = 0$ for $1 \le p \le m-1$, by the Hodge theory, 
 there exist no non-zero harmonic $p$-forms on ${\S}^m$.
 In particular, there exist no non-zero parallel $p$-forms.
\end{rem}

\begin{proof}
 We construct $k$ test $p$-forms $\varphi_i$ for the min-max principle. 
 Their behaviour will be like $f_iv_p$, for suitable functions $f_i$, if $v_p$ is
 the volume $p$-form on $(\Bbb{S}^{p}, g_{\Bbb{S}^{p}})$, so we can take advantage of
 the properties of the standard volume form. The functions $f_i$ are constructed as follows:
 we divide the interval $[2,L+2]$ of length $L$ into $k$ intervals 
 $I_i := [r_{i-1}, r_{i}]$ $(i =1,\dots,k)$, where
\begin{equation*} 
\begin{split}
   2 = r_0 < r_1 <  \cdots < r_k = L+2  \  \text{ with } \
   r_i :=  \dfrac{L}{k} \, i + 2  \quad  (i= 0,1, \dots, k). 
\end{split}
\end{equation*}
 Let $f_i(r)$ be the first Dirichlet eigenfunction of the Laplacian 
 acting on functions on the interval $I_i$, that is, 
\begin{equation*}
\begin{split}
  f_i(r) &= \sin \left( (r - r_{i-1}) \dfrac{k \pi}{L} \right)
   \quad  \text{ for } \  r \in  [r_{i-1}, r_{i}].
\end{split}
\end{equation*}

\begin{figure}[H] \label{fig:test-functions}
\begin{center}
\begin{tikzpicture} [scale=1.2]
  \draw[->] (-0.5,0) -- (6.5,0) node[below] {$r$};
  \draw[->] (0,-0.5) -- (0,1.5) node[left] {};
  \draw (0,0) node[below left] {$O$};  
  \draw (1,0) node[below] {$r_0$}; 
  \draw (2,0) node[below] {$r_1$}; 
  \draw (3,0) node[below] {$r_2$}; 
  \draw (5,0) node[below] {$r_{k-1}$}; 
  \draw (6,0) node[below] {$r_k$}; 
  \draw (0,1) node[left] {$1$}; 
  \draw [domain=0:1] plot (\x+1,{sin(\x*pi r)});
  \draw [domain=0:1] plot (\x+2,{sin(\x*pi r)});
  \draw [domain=0:1] plot (\x+3,{sin(\x*pi r)});
  \draw [domain=0:1] plot (\x+5,{sin(\x*pi r)});
  \draw [dotted] (0,1) -- (5.5,1);
\end{tikzpicture}
   \vspace{-0.5cm}
\end{center}
   \caption{test functions $f_i(r)$}
\end{figure}

Then, we define $k$ test $p$-forms $\vphi_i$ on ${\Bbb S}^m$ as follows
(recall that $v_p$ is the volume $p$-form on $(\Bbb{S}^{p}, g_{\Bbb{S}^{p}})$):
 on $H_1 = \S^{p} \times \D^{m-p}_{L}$, 
\begin{equation} \label{eq:test-forms}
\begin{split}
 \vphi_i  &:=
 \begin{cases}
    f_i(r) v_p &  \text{on } 
        \S^{p} \times \Big( [r_{i-1}, r_{i}] \times \S^{m-p-1} \Big), \\
    \  0   &   \text{otherwise},
 \end{cases}
\end{split}
\end{equation}
and $\vphi_i \equiv 0$ on $H_2 = \D^{p+1} \times \S^{m-p-1}$.

 We remark that the family $\vphi_i$ $(i= 1,\dots,k)$ is orthogonal.

\noindent
$(1)$  We first prove the case of the rough Laplacian acting on $p$-forms.
 Since the orthogonal family $\vphi_i$ $(i= 1,\dots,k)$ have disjoint support, 
 they are also orthogonal for the quadratic form defining the rough Laplacian,
 namely, $\|\nabla \vphi\|^2_{L^2 (\S^m,g_{p,L})}$. 
 The min-max principle and Remark \ref{rem:no-zero-ev} give then
\begin{equation} \label{eq:rough-min-max}
\begin{split}
 0 <  \overline{\lambda}^{(p)}_k ({\Bbb S}^m,g_{p,L})
   &\le  \max_{i=1,2,\dots,k} \left\{ \dfrac{ \| \nabla \vphi_i \|^2_{L^2 (\S^m,g_{p,L})}  
      }{ \| \vphi_i \|^2_{L^2(\S^m, g_{p,L})} } \right\}. 
\end{split}
\end{equation}
 Since $v_p$ is parallel, the numerator of the right-hand side of
 \eqref{eq:rough-min-max} is
\begin{equation*}
\begin{split}
 \| \nabla \vphi_i \|^2_{L^2(\S^m, g_{p,L})}
   &=  \| \nabla (f_i v_p) \|^2_{L^2 (\S^{p} \times \D^{m-p}_{L})} 
    =  \| df_i \otimes v_p \|^2_{L^2 (\S^{p} \times (I_i \times {\Bbb S}^{m-p-1}) )}. 
\end{split}
\end{equation*}
 Since the Riemannian metric on  $C_i = I_i \times {\Bbb S}^{m-p-1}$ is product, we have
\begin{equation*}
\begin{split}
 \| df_i \otimes v_p \|^2_{L^2 (\S^{p} \times C_i)} 
  &= \dint_{\S^p} \dint_{I_i \times {\Bbb S}^{m-p-1}} | df_i \otimes v_p |^2 
        d \mu_{\S^p} d \mu_{C_i} \\
  &= \vol(\S^p) \dint_{I_i \times \S^{m-p-1}} | df_i |^2 dr d \mu_{\S^{m-p-1}} \\
  &= \vol(\S^p) \vol( \S^{m-p-1}) \dint^{r_i}_{r_{i-1}} | df_i |^2 dr.
\end{split}
\end{equation*}
 Since $f_i(r)$ is the Dirichlet eigenfunction on the interval $I_i = [r_{i-1},r_i]$,
 which is isometric to $[0, \frac{L}{k}]$, we have 
\begin{equation*} \label{eq:eigenfunc}
\begin{split}
  \dint^{r_i}_{r_{i-1}} | df_i |^2 dr 
     &= \dfrac{k^2 \pi^2}{L^2} \, \dint^{r_i}_{r_{i-1}} | f_i |^2 dr.
\end{split}
\end{equation*}
 Therefore, the numerator of the right-hand side of \eqref{eq:rough-min-max} is
\begin{equation} \label{eq:numerator-2}
\begin{split}
 \| \nabla \vphi_i \|^2_{L^2(\S^m, g_{p,L})}
   &=  \dfrac{k^2 \pi^2}{L^2} \, \vol(\S^p) \vol( \S^{m-p-1}) \, 
        \dint^{r_i}_{r_{i-1}} | f_i |^2 dr.
\end{split}
\end{equation}

 On the other hand, the denominator of the right-hand side of  
 \eqref{eq:rough-min-max} is
\begin{equation} \label{eq:denomirator-1}
\begin{split}
 \| \vphi_i \|^2_{L^2(S^m, g_{p,L})}
   &=  \| f_i v_p \|^2_{L^2 (\S^{p} \times \D^{m-p}_{L})} 
    =  \| f_i v_p \|^2_{L^2 (\S^{p} \times C_i)}  \\
   &= \vol(\S^p) \dint_{I_i \times \S^{m-p-1}} | f_i |^2 dr d \mu_{\S^{m-p-1}} \\
  &= \vol(\S^p) \vol( \S^{m-p-1}) \dint^{r_i}_{r_{i-1}} | f_i |^2 dr.
\end{split}
\end{equation}
 Thus, by substituting \eqref{eq:numerator-2} and \eqref{eq:denomirator-1}
 into \eqref{eq:rough-min-max}, we obtain 
\begin{equation*}
\begin{split}
  0 < \overline{\lambda}^{(p)}_k ({\Bbb S}^m,g_{p,L})
    &\le  \dfrac{k^2 \pi^2}{L^2}.
\end{split}
\end{equation*}
\noindent
 $(2)$  Next, we prove the case of the Hodge Laplacian acting on co-exact $p$-forms. 
 We use the same test $p$-forms $\vphi_i$ constructed in \eqref{eq:test-forms}.
 For the same reason as before, this family is orthogonal for the quadratic form
 defining the Hodge Laplacian $\|(d + \delta) \vphi\|^2_{L^2(\S^m, g_{p,L})}$.
 Moreover, we note that the test $p$-forms $\vphi_i$ are co-closed.
 Indeed, since the Riemannian metric is product on the support of $\vphi_i$, 
\begin{equation*}
\begin{split}
 \delta_{g_{p,L}} \vphi_i 
   &= f (\delta_{g_{p,L}} v_p) - i_{({\grad}_{g_{p,L}} f)}(v_{p}) \equiv 0.
\end{split}
\end{equation*}
 Since ${\Bbb S}^m$ has no non-zero harmonic $p$-forms, all co-closed forms must 
 be co-exact. Therefore, from the min-max principle of the Hodge-Laplacian acting 
 on co-exact $p$-forms, it follows that 
\begin{equation*} \label{eq:Hodge-min-max}
\begin{split}
  0 < \lambda^{\prime \prime (p)}_k ({\Bbb S}^m, g_{p,L})
   &\le  \max_{i=1,2,\dots,k} \left\{  \dfrac{ \| d \vphi_i \|^2_{L^2(\S^m,g_{p,L})} 
      }{ \| \vphi_i \|^2_{L^2(\S^m, g_{p,L})} } \right\}. 
\end{split}
\end{equation*}
 By the same calculations as in $(1)$, we obtain the same upper bound 
\begin{equation*}
\begin{split}
  \lambda^{\prime \prime (p)}_k ({\Bbb S}^m,g_{p, L})
    &\le  \dfrac{k^2 \pi^2}{L^2}.
\end{split}
\end{equation*}
\end{proof}

 Finally, we normalize the volume to be one.
 Namely, if we set a new Riemannian metric 
\begin{equation*}
\begin{split}
  \overline{g}_{p,L} &:= \vol({\Bbb S}^m, g_{p,L})^{-\frac{2}{m}} g_{p,L}, 
\end{split}
\end{equation*}
 then 
 $ \vol({\Bbb S}^m, \overline{g}_{p,L}) \equiv 1$
 and still $K_{\overline{g}_{p,L}} \ge 0.$
 From Lemma \ref{lem:k-ev-estimate} and \eqref{eq:vol-growth}, we have
\begin{equation} \label{eq:ev-vol}
\begin{split}
 \overline{\lambda}^{(p)}_k ({\Bbb S}^m, \overline{g}_{p,L})
  &=  \overline{\lambda}^{(p)}_k ({\Bbb S}^m, g_{p,L}) \cdot 
          \vol({\Bbb S}^m, g_{p,L})^{\frac{2}{m}} \\
   &\le  \dfrac{k^2 \pi^2}{L^2} \cdot ( A L + B )^{\frac{2}{m}} \\
   &=  k^2 \pi^2 \cdot \left( \dfrac{A L + B}{L^m} \right)^{\frac{2}{m}}
   \longrightarrow 0,
\end{split}
\end{equation}
 and similarly  
 $\lambda^{\prime \prime (p)}_k ({\Bbb S}^m, \overline{g}_{p,L}) \longrightarrow 0$
 as $L \longrightarrow \infty.$
 Thus, we have finished the proof of Theorem \ref{thm:sphere}.
\end{proof}

\begin{rem}\label{conj:lott}
 Theorem \ref{thm:sphere} implies that the first positive eigenvalue of 
 the Hodge-Laplacian acting on $p$-forms cannot be estimated below 
 in terms of dimension, volume and a lower bound of the sectional curvature. 
 For this family $\overline{g}_{p,L}$, 
 the diameter $\diam ({\Bbb S}^m,\overline{g}_{p,L}) \longrightarrow \infty$
 as $L \to \infty.$

 Lott \cite[p.918]{Lott[04]-quotient} conjectured that 
 for given $m \in \N$, $\kappa \in \R$ and $v, D >0$, 
 there exists a positive constant $C(m, \kappa,v,D) >0$ such that any connected
 oriented closed Riemannian manifold $(M^m,g)$ of dimension $m$ with the sectional 
 curvature $K_g \ge \kappa$, the volume $\vol(M,g) \ge v$ and the diameter
 $\diam (M,g) \le D$ satisfies 
\begin{equation*} 
\begin{split}
  \lambda^{(p)}_1(M,g) &\ge C(m, \kappa,v,D) > 0. 
\end{split}
\end{equation*}
 This conjecture is still open.
 We note that this is a non-collapsing case.
 In a collapsing case, we do not know anything, but recently Boulanger and Courtois
 \cite{Boulanger-Courtois[21]} proposed a Cheeger constant for coexact 1-forms, 
 whose square gives a lower bound of the first positive eigenvalue of the Hodge-Laplacian 
 acting on $1$-forms for Riemannian manifolds with bounded diameter and 
 bounded sectional curvature.
\end{rem}


\section{Lower bounds for the eigenvalues of the Hodge-Laplacian on ${\Bbb S}^m$}

 We consider lower bounds of the eigenvalues of the Hodge-Laplacian acting 
 on exact $q$-forms for $1 \le q \le m$ for the one-parameter family of 
 volume un-normalized Riemannian metrics $g_{p,L}$ on ${\Bbb S}^m$ constructed 
 in Theorem \ref{thm:sphere}.

\begin{thm} \label{thm:lower-bdd}
 Let $p$ be an integer with $1 \le p \le m-1$.
 For the one parameter family of Riemannian metrics $g_{p,L}$ on ${\Bbb S}^m$ 
 constructed in the proof of Theorem \ref{thm:sphere}, 
 the eigenvalues of the Hodge-Laplacian acting on exact $q$-forms 
 for $1 \le q \le m$ satisfy the following: 
\begin{enumerate}
  \item  For $q \neq 1,p,p+1,m-p-1,m-p,m-1,m,$
 there exists a positive constant $C > 0$ independent of $L$ such that
\begin{equation*}
\begin{split}
 \lambda^{\prime (q)}_{1}({\Bbb S}^m, g_{p,L}) &\ge  C >0.
\end{split}
\end{equation*}
  \item  For $q = 1,p,p+1,m-p-1,m-p,m-1,m,$ 
 there exist positive constants $C_1, C_2 > 0$ independent of $L$ 
 such that for sufficiently large $L >0$ 
\begin{equation*}
\begin{split}
 \lambda^{\prime (q)}_{n_q +1}({\Bbb S}^m, g_{p,L}) &\ge \dfrac{1}{C_1 L^2 + C_2}.
\end{split}
\end{equation*}
 Here, $n_q$ is given by 
\begin{equation*}
\begin{split}
  n_q &:= 
\begin{cases}
   4  &  \text{if } (p,q) = (\dfrac{m-1}{2}, \dfrac{m+1}{2})
              \ \text{ and $m$ is odd,} \\
   2  &  \text{if }  q = 1, p+1,m-p,m,  \\
      &  \quad \text{ except for } (p,q) = (\dfrac{m-1}{2}, \dfrac{m+1}{2})
          \text{ if $m$ is odd,} \\
   0  &  \text{otherwise}.
\end{cases}
\end{split}
\end{equation*}
\end{enumerate}
\end{thm}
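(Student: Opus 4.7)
The plan is to exploit the product structure of $g_{p,L}$ on the long cylindrical region $C_L = \S^{p} \times [2, L+2] \times \S^{m-p-1}$ by comparing the Hodge spectrum of $(\S^m, g_{p,L})$ with those of $C_L$ together with the two complementary caps $\S^p \times [0, 2] \times \S^{m-p-1}$ (the warped-product part of $H_1$) and $H_2 = \D^{p+1} \times \S^{m-p-1}$, whose metrics are independent of $L$. Using a Dirichlet-Neumann-type bracketing adapted to the Hodge-Laplacian (with absolute and relative boundary conditions), the spectra of the two caps are uniformly bounded below away from their finite-dimensional kernels, so all $L$-dependence of the small eigenvalues is captured by $C_L$.

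On $C_L$ with its product metric, K\"unneth decomposes a $q$-form as a sum of terms $\alpha \wedge \beta \wedge \gamma$ with $\alpha \in \Omega^{a}(\S^p)$, $\beta \in \Omega^{b}([2, L+2])$ (so $b \in \{0,1\}$), and $\gamma \in \Omega^{c}(\S^{m-p-1})$, with $a + b + c = q$. The Hodge-Laplacian splits as $\Delta = \Delta^{\S^p} + \Delta^{[2,L+2]} + \Delta^{\S^{m-p-1}}$, so the eigenvalue of a pure tensor eigenform equals the sum of the three factor eigenvalues. Unless both $\alpha$ and $\gamma$ are harmonic on their respective spheres, the resulting eigenvalue is bounded below by the smallest positive Hodge eigenvalue on $\S^p$ or $\S^{m-p-1}$, which is independent of $L$. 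Since harmonic forms on $\S^n$ occur only in degrees $0$ and $n$, the admissible combinations force $a \in \{0, p\}$ and $c \in \{0, m-p-1\}$, so the total degree $q = a+b+c$ must lie in $\{0, 1, p, p+1, m-p-1, m-p, m-1, m\}$. This proves part (1): for $q$ outside this set the uniform lower bound on $C_L$ transfers through the bracketing to $\S^m$.

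For part (2), when $q$ is admissible, the contributing modes reduce to a one-dimensional problem on $[2, L+2]$ with boundary conditions inherited from the two caps. When $b = 1$, i.e., $q \in \{1, p+1, m-p, m\}$, the mode $\alpha \wedge h(r)\,dr \wedge \gamma$ is automatically closed, and since $H^{q}(\S^m) = 0$ in this range (the case $q = m$ being handled via the Hodge duality \eqref{eq:Hodge-duality}), it is exact on $\S^m$. Each admissible pair $(a, c)$ of harmonic components then contributes two first-mode solutions --- one with a Neumann-like and one with a Dirichlet-like effective boundary condition at the two caps --- giving $n_q = 2$ generically, and $n_q = 4$ in the degenerate case $m = 2p+1$, $q = p+1$, where the factor spheres $\S^p$ and $\S^{m-p-1}$ coincide and the admissible combinations double. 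When $b = 0$, i.e., $q \in \{p, m-p-1, m-1\}$, the harmonic modes $\alpha \wedge \gamma$ are constant in $r$ and cannot be exact on $\S^m$ (an exact primitive would require the missing $dr$ factor), giving $n_q = 0$.

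The bound $\lambda^{\prime(q)}_{n_q + 1}(\S^m, g_{p,L}) \ge 1/(C_1 L^2 + C_2)$ then follows from a Poincar\'e-type inequality: modulo the $n_q$-dimensional span of first-mode test forms, the Hodge-Laplacian on exact $q$-forms is bounded below by the second eigenvalue of the 1D problem on $[2, L+2]$, which is of order $4\pi^2 / L^2$, with lower-order corrections of size $O(1)$ absorbed from the cap matching. The main technical obstacle is the precise bookkeeping at the cap-cylinder interfaces: one must Hodge-decompose on each cap to determine, for every pair of harmonic components on $\S^p$ and $\S^{m-p-1}$, whether the corresponding cylinder mode admits a harmonic extension across the cap (a Neumann-type condition) or must decay exponentially (a Dirichlet-type condition), and check that these two types together account for exactly $n_q$ small eigenvalues with no further leakage. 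The doubling in the degenerate case $m = 2p + 1$ and the exceptional $n_q = 0$ values require the most delicate case analysis.
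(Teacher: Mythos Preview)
Your geometric intuition is sound: the K\"unneth decomposition on the long cylinder $C_L=\S^p\times[2,L+2]\times\S^{m-p-1}$ correctly identifies the potentially small modes as those carried by harmonic forms on the sphere factors, and the caps are $L$-independent. This is exactly the mechanism driving the paper's proof as well.

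The genuine gap is the passage from the cylinder back to the closed sphere. You invoke a ``Dirichlet--Neumann-type bracketing adapted to the Hodge-Laplacian'' and then a ``Poincar\'e-type inequality modulo the $n_q$-dimensional span of first-mode test forms,'' but neither is made precise, and you yourself flag the cap--cylinder bookkeeping as the main obstacle. For the Hodge-Laplacian restricted to \emph{exact} forms, naive bracketing does not directly yield an index shift of the right size, and your heuristic for $n_q$ (``one Neumann-like and one Dirichlet-like boundary condition per admissible pair $(a,c)$'') is not a proof: it is not clear why each pair contributes exactly two small modes, nor why nothing else leaks through.

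The paper closes this gap with a single tool: McGowan's lemma. One takes an open cover $\{U_1,U_2,U_3\}$ with $U_2$ the long cylinder and $U_1,U_3$ fixed neighborhoods of the two caps, together with a partition of unity whose cut-off derivatives live on the $L$-independent overlaps $U_{12},U_{23}\cong\S^p\times[0,1]\times\S^{m-p-1}$. McGowan's inequality then gives
\[
\lambda^{\prime(q)}_{n_q+1}(\S^m,g_{p,L})\ \ge\ \Big(\text{explicit expression in }\nu^{\prime(q)}_1(U_i),\ \nu^{\prime(q-1)}_1(U_{ij}),\ C(\rho)\Big)^{-1},
\]
with the index shift given \emph{cohomologically} by $n_q=\sum_{i<j}\dim H^{q-1}(U_{ij};\R)=2\dim H^{q-1}(\S^p\times\S^{m-p-1};\R)$. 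This immediately produces the stated values of $n_q$ (including the $n_q=4$ degeneracy when $p=(m-1)/2$, $q=(m+1)/2$) without any boundary-condition heuristics. The only $L$-dependent input is $\nu^{\prime(q)}_1(U_2,g_{p,L})$, which your K\"unneth analysis handles correctly: it is bounded below by an $L$-independent constant when $q\notin\{1,p,p+1,m-p-1,m-p,m-1,m\}$, and by $L^{-2}\min\{\nu^{(0)}_1([0,1]),\nu^{(1)}_1([0,1])\}$ otherwise. Plugging into McGowan's formula gives parts (1) and (2) directly.

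In short: keep your K\"unneth analysis of $U_2$, but replace the vague bracketing by McGowan's lemma. That lemma is precisely the ``precise bookkeeping at the cap--cylinder interfaces'' you are missing, and it explains $n_q$ as a cohomological quantity rather than a count of boundary modes.
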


\begin{cor} \label{cor:lower-bdd}
 Let $p$ be an integer with $1 \le p \le m-1$.
 For the one parameter family of the volume normalized Riemannian metrics
 $\overline{g}_{p,L}$ on ${\Bbb S}^m$, 
 if $q \neq 1,p,p+1,m-p-1,m-p,m-1,m,$ we have
\begin{equation*}
\begin{split}
 \lambda^{(q)}_{1}({\Bbb S}^m, \overline{g}_{p,L}) 
  &\longrightarrow \infty  \quad
    \text{as } \  L \to \infty.
\end{split}
\end{equation*}
\end{cor}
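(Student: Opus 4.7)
The plan is to reduce Corollary \ref{cor:lower-bdd} to the uniform lower bound of Theorem \ref{thm:lower-bdd}(1), combined with the volume growth of $g_{p,L}$ and the duality relation \eqref{eq:first-ev-duality}.

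First, I would use the scaling formulae \eqref{eq:scaling-property} together with the definition \eqref{eq:normalized-metric} of $\overline{g}_{p,L}$ to write
\begin{equation*}
  \lambda^{(q)}_1({\Bbb S}^m, \overline{g}_{p,L})
     = \vol({\Bbb S}^m, g_{p,L})^{2/m} \cdot \lambda^{(q)}_1({\Bbb S}^m, g_{p,L}),
\end{equation*}
and recall from \eqref{eq:vol-growth} that $\vol({\Bbb S}^m, g_{p,L}) = AL + B$, so the volume factor blows up like $L^{2/m}$. It therefore suffices to show that $\lambda^{(q)}_1({\Bbb S}^m, g_{p,L})$ stays bounded below by a positive constant independent of $L$.

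Second, since $b_q({\Bbb S}^m) = 0$ for $1 \le q \le m-1$, formula \eqref{eq:first-ev-duality} specializes to
\begin{equation*}
  \lambda^{(q)}_1({\Bbb S}^m, g_{p,L})
     = \min \bigl\{\, \lambda^{\prime (q)}_1({\Bbb S}^m, g_{p,L}), \ \lambda^{\prime (m-q)}_1({\Bbb S}^m, g_{p,L}) \,\bigr\}.
\end{equation*}
The key observation is that the excluded set $\{1,p,p+1,m-p-1,m-p,m-1,m\}$ from Theorem \ref{thm:lower-bdd}(1) is invariant under the involution $q \mapsto m-q$ on $\{1,\dots,m-1\}$, via the pairings $1 \leftrightarrow m-1$, $p \leftrightarrow m-p$, and $p+1 \leftrightarrow m-p-1$. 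Consequently, whenever $q$ satisfies the hypothesis of the corollary, so does $m-q$, and Theorem \ref{thm:lower-bdd}(1) supplies uniform positive lower bounds for both $\lambda^{\prime (q)}_1$ and $\lambda^{\prime (m-q)}_1$ simultaneously.

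Combining these two steps gives $\lambda^{(q)}_1({\Bbb S}^m, g_{p,L}) \ge C > 0$ uniformly in $L$, and hence
\begin{equation*}
  \lambda^{(q)}_1({\Bbb S}^m, \overline{g}_{p,L}) \ge C \, (AL+B)^{2/m} \longrightarrow \infty
  \quad \text{as } L \to \infty.
\end{equation*}
I do not expect a significant obstacle here, since no new analytic estimate is required once Theorem \ref{thm:lower-bdd} is at hand; the only point to verify carefully is the symmetry of the excluded set, which is a direct check. The mildly delicate aspect is ensuring the edge cases $p=1$ and $p=m-1$ (where several of the seven excluded values coincide) still respect the $q \mapsto m-q$ symmetry within $\{1,\dots,m-1\}$, but inspection shows they do, and the argument goes through verbatim.
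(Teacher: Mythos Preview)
Your proposal is correct and follows essentially the same route as the paper's proof: both apply Theorem~\ref{thm:lower-bdd}(1) to $q$ and to $m-q$ (using the Hodge duality \eqref{eq:Hodge-duality} and the observation that the excluded set is invariant under $q\mapsto m-q$) to obtain a uniform positive lower bound for $\lambda^{(q)}_1({\Bbb S}^m,g_{p,L})$, and then multiply by the volume factor $(AL+B)^{2/m}$ from \eqref{eq:vol-growth}. Your write-up is in fact slightly more careful than the paper's, which leaves the symmetry of the excluded set implicit.
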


\begin{proof}[Proof of Corollary $\ref{cor:lower-bdd}$]
 By combining $(1)$ in Theorem \ref{thm:lower-bdd} with the Hodge-duality
  $\lambda^{\prime \prime (q)}_{1} = \lambda^{\prime (m-q)}_{1}$,
 we find that the first positive eigenvalue of the Hodge-Laplacian acting 
 on $q$-forms for $q \neq 1,p,p+1,m-p-1,m-p,m-1$ have a uniform lower bound 
 in $L$.
 That is, there exists a positive constant $C >0$ independent of $L$ such that 
\begin{equation*}
\begin{split}
 \lambda^{(q)}_{1}({\Bbb S}^m, g_{p,L}) 
  &= \min \{ \lambda^{\prime (p)}_1({\Bbb S}^m, g_{p,L}, 
         \ \lambda^{\prime \prime (p)}_1({\Bbb S}^m, g_{p,L}) \} \\
  &= \min \{ \lambda^{\prime (p)}_1({\Bbb S}^m, g_{p,L},
         \ \lambda^{\prime (m-p)}_1({\Bbb S}^m, g_{p,L}) \} \\
  &\ge  C >0.
\end{split}
\end{equation*}
 For the volume normalized metric 
 $ \overline{g}_{p,L} = \vol({\Bbb S}^m, g_{p,L})^{-\frac{2}{m}} g_{p,L}$, 
 in the same way as in \eqref{eq:ev-vol}, we have
\begin{equation*} 
\begin{split}
 \lambda^{(q)}_1 ({\Bbb S}^m, \overline{g}_{p,L})
   &= \lambda^{(q)}_1 ({\Bbb S}^m, g_{p,L}) \cdot 
          \vol({\Bbb S}^m, g_{p,L})^{\frac{2}{m}} \\
   &\ge  C (A L + B)^{\frac{2}{m}} 
   \longrightarrow \infty,  \quad  \text{ as } \  L \to \infty.
\end{split}
\end{equation*}
\end{proof}

\begin{rem}
 In the case of $n_q =2, 4$ for $q = 1,p,p+1,m-p-1,m-p,m-1,m,$ 
 we do not know whether or not 
 $\lambda^{(q)}_{i}({\Bbb S}^m, g_{p,L})$ for $i=1, \dots, n_q$ 
 have positive lower bounds independent of $L$. 
 A similar problem occurs in \cite[Remark $5.7$]{Egidi-Post[17]}, p.457.
\end{rem}

 We prove Theorem \ref{thm:lower-bdd} in the same way as Gentile and Pagliara 
 \cite{Gentile-Pagliara[95]}.
 In this way, the following result by McGowan \cite[Lemma $2.3$]{McGowan[93]} 
 (see also \cite{Gentile-Pagliara[95]}, Lemma $1$) plays an important r\^ole.
 A similar argument was used to prove Corollary $1.5$ in \cite{large-vn[05]}.

\vspace{0.3cm}
 We now denote by $\nu^{\prime (p)}_1(U,g)$ the first positive eigenvalue of 
 the Hodge-Laplacian acting on exact $p$-forms on $(U,g)$ 
 with the absolute boundary condition.

\begin{lem}[McGowan \cite{McGowan[93]}] \label{lem:McGowan} 
 Let $(M^m,g)$ be a connected oriented closed Riemannian manifold of dimension $m$.
 We take a finite open covering $\{ U_i \}_{i=1}^K $ of $M$ satisfying
   $U_i \cap U_j \cap U_k = \emptyset$ and
 a partition of unity $ \{ \rho_i \}_{i=1}^K$ subordinated to $\{ U_i \}_{i=1}^K $.
 If we set $n_p := \displaystyle \sum_{i <j} \dim H^{p-1}(U_{ij}; \R)$,
  where $U_{ij} := U_i \cap U_j,$ and 
\begin{equation*}
\begin{split}
 C_g(\rho) &:= \max_{i=1,\ldots,K} \max_{x \in U_i} \big\{ | d \rho_i|^2_g(x) \big\},
\end{split}
\end{equation*}
 then we have 
\begin{equation*}
\begin{split}
 \lambda^{\prime (p)}_{n_p+1} & (M,g) \ge  \\
   & \dfrac{1}{8 \dsum_{i=1}^K \left\{ \dfrac{1}{ \nu^{\prime (p)}_1(U_i,g) } 
     + \dsum_{\substack{ j \neq i \\ U_i \cap U_j \neq \emptyset}}
        \left( \dfrac{ C_g(\rho)}{\nu^{\prime (p-1)}_1(U_{ij},g)}
       + 1 \right) \left( \dfrac{1}{ \nu^{\prime (p)}_1(U_i,g) } + 
       \dfrac{1}{ \nu^{\prime (p)}_1(U_j,g) } \right)  \right\} }.
\end{split}
\end{equation*}
\end{lem}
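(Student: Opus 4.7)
The plan is to prove this via a Mayer--Vietoris / partition-of-unity argument in the spirit of McGowan's original proof. I would take an exact $p$-eigenform $\omega$ of the Hodge-Laplacian realizing $\lambda = \lambda^{\prime (p)}_{n_p+1}(M,g)$, and reduce the problem to constructing a global primitive $\alpha$ with $d\alpha = \omega$ and an explicit bound on $\|\alpha\|^2_{L^2}$. Indeed, since $\omega$ is exact, $\|\omega\|^2 = (\omega, d\alpha) = (\delta \omega, \alpha) \le \|\delta \omega\| \cdot \|\alpha\|$, and $\|\delta \omega\|^2 = \lambda \|\omega\|^2$ then gives $\lambda \ge \|\omega\|^2 / \|\alpha\|^2$. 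The right-hand side of the claimed inequality should therefore be an upper bound for $\|\alpha\|^2 / \|\omega\|^2$.

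First I would localize: on each $U_i$, $\omega|_{U_i}$ is exact, so the minimax principle for exact $p$-forms with absolute boundary condition produces $\beta_i$ coexact on $U_i$ with $d\beta_i = \omega|_{U_i}$ and
\begin{equation*}
\|\beta_i\|^2_{L^2(U_i)} \le \frac{1}{\nu^{\prime (p-1)}_1(U_i,g)} \, \|\omega\|^2_{L^2(U_i)}.
\end{equation*}
On the double overlap $U_{ij}$, the difference $\beta_i - \beta_j$ is a closed $(p-1)$-form whose de~Rham class lies in $H^{p-1}(U_{ij};\R)$. The orthogonality of $\omega$ to the first $n_p = \sum_{i<j} \dim H^{p-1}(U_{ij};\R)$ exact eigenforms is exactly what is needed to force all these classes to vanish: one regards $\omega \mapsto \bigl([\beta_i - \beta_j]\bigr)_{i<j}$ as a linear map from the eigenspace to $\bigoplus_{i<j} H^{p-1}(U_{ij};\R)$, whose kernel has codimension at most $n_p$. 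So, after subtracting the obstruction, one may solve $\beta_i - \beta_j = d\gamma_{ij}$ on $U_{ij}$ with $\gamma_{ij}$ coexact and $\|\gamma_{ij}\|^2 \le \nu^{\prime (p-1)}_1(U_{ij},g)^{-1} \|\beta_i - \beta_j\|^2$.

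Next I would assemble a global primitive using the partition of unity. The triple-empty-intersection hypothesis $U_i \cap U_j \cap U_k = \emptyset$ ensures the combinatorics stay on edges only, so one can set
\begin{equation*}
\alpha \;=\; \sum_i \rho_i \beta_i \;-\; \sum_{i<j} d\rho_j \wedge \gamma_{ij}
\end{equation*}
(or an equivalent variant), and check that $d\alpha = \omega$ using $\sum_i \rho_i \equiv 1$ and $\beta_i - \beta_j = d\gamma_{ij}$. Estimating $\|\alpha\|^2$ by Cauchy--Schwarz and the elementary inequality $|a+b|^2 \le 2|a|^2 + 2|b|^2$ (applied twice, accounting for the factor $8$), the first sum contributes the $1/\nu^{\prime (p)}_1(U_i,g)$ terms while the correction sum contributes cross terms in which $|d\rho_j|^2 \le C_g(\rho)$ multiplies $\|\gamma_{ij}\|^2$, thus yielding the factor $\bigl(C_g(\rho)/\nu^{\prime (p-1)}_1(U_{ij},g) + 1\bigr)$ and a product of the two local spectral inverses, exactly as in the displayed denominator.

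The main obstacle I anticipate is the bookkeeping in the assembly step: one must choose the combination of $\rho_i \beta_i$ and overlap corrections so that the total differential is precisely $\omega$ and so that each $C_g(\rho)$-weighted term lands next to the correct $\nu^{\prime (p-1)}_1(U_{ij},g)^{-1}$ factor, with no stray constants. A secondary technical point is a clean proof of the pigeonhole/orthogonality step: one should verify that after choosing an orthonormal basis of the appropriate finite-dimensional obstruction space one may assume without loss of generality that all classes $[\beta_i - \beta_j]$ vanish at the cost of discarding at most $n_p$ dimensions from the eigenspace. Both steps are essentially linear-algebra-cum-integration, but getting the explicit constant in the form stated requires careful tracking.
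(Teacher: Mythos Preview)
The paper does not give its own proof of this lemma; it is quoted from McGowan \cite{McGowan[93]} (and Gentile--Pagliara \cite{Gentile-Pagliara[95]}) and used as a black box in the proof of Theorem~\ref{thm:lower-bdd}. Your sketch reproduces McGowan's original argument faithfully: localize a primitive on each $U_i$, kill the \v{C}ech obstruction on double overlaps by passing to a codimension-$n_p$ subspace of the span of the first $n_p+1$ exact eigenforms, patch with the partition of unity, and estimate the resulting global primitive. The triple-intersection hypothesis is used exactly as you say, to keep the \v{C}ech complex at the edge level.

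One slip to fix: in your bound for the local primitive $\beta_i$ the denominator should be $\nu^{\prime (p)}_1(U_i,g)$, not $\nu^{\prime (p-1)}_1(U_i,g)$, since $\omega|_{U_i}$ is an exact $p$-form and the coexact $(p-1)$-primitive is controlled by the first eigenvalue on exact $p$-forms with absolute boundary condition. The quantity $\nu^{\prime (p-1)}_1(U_{ij},g)$ enters only at the next step, when you solve $d\gamma_{ij}=\beta_i-\beta_j$ on the overlap; this is consistent with the displayed denominator in the statement. Also, since $\omega$ is in general only a linear combination of eigenforms (not a single eigenform), replace ``$\|\delta\omega\|^2=\lambda\|\omega\|^2$'' by the inequality $\|\delta\omega\|^2\le\lambda^{\prime(p)}_{n_p+1}\|\omega\|^2$, which is all you need. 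With these corrections your outline is sound; the bookkeeping you flag is indeed the only place requiring care, and it goes through with the factor $8$ coming from two applications of $(a+b)^2\le 2a^2+2b^2$ as you anticipate.
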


\begin{proof}[Proof of Theorem $\ref{thm:lower-bdd}$.]
 We take an open covering $\{ U_1, U_2, U_3\}$ of $M$ as follows:
\begin{equation*}
\begin{split}
   U_1 &:= {\S}^p \times \big( [0,3] \times {\S}^{m-p-1} \big), \\
   U_2 &:= {\S}^p \times \big( [2,L+2] \times {\S}^{m-p-1} \big), \\
   U_3 &:= \Big( {\S}^p \times \big( [L+1,L+2] \times {\S}^{m-p-1} 
      \big) \Big) \cup \Big( \D^{p+1} \times \S^{m-p-1} \Big).
\end{split}
\end{equation*}
 Then, 
\begin{equation*}
\begin{split}
  U_{12} &= U_1 \cap U_2 = {\S}^p \times \big( [2,3] \times {\S}^{m-p-1} \big), \\
  U_{13} &= U_1 \cap U_3 = \emptyset,  \\
  U_{23} &= U_2 \cap U_3 = {\S}^p \times \big( [L+1,L+2] \times {\S}^{m-p-1} \big)
\end{split}
\end{equation*}
 and $U_{123} := U_1 \cap U_2 \cap U_3 = \emptyset.$
 Since both $U_{12}$ and $U_{23}$ are isometric to 
 ${\S}^p \times \big( [0,1] \times {\S}^{m-p-1} \big),$
  their eigenvalues do not depend on $L$.
 Therefore, only $\nu^{\prime (p)}_1(U_2, g_{p,L})$ depends on $L$.

 We can take a partition of unity $\{ \rho_i \}_{i=1,2,3}$ subordinate to 
 this open covering $\{ U_1, U_2, U_3 \}$ such that
 the supports of $d \rho_i$ are in $U_{12}$ or $U_{23}$.
 Since the $C^0$-norms of $d \rho_i$ are independent of $L$,
  the constant $C_g(\rho)$ is also independent of $L$.

 \vspace{0.3cm}

 Now, we compute $n_q.$
 Since both $U_{12}$ and $U_{23}$ are homotopy equivalent to ${\S}^p \times {\S}^{m-p-1},$
 by the K\"unneth formula, we have 
\begin{equation*}
\begin{split}
  n_q &= \dim H^{q-1}(U_{12}; \R) + \dim H^{q-1}(U_{23}; \R)  \\
      &= 2 \dim H^{q-1}({\S}^p \times {\S}^{m-p-1}; \R) \\
      &= 
\begin{cases}
   4  &  \text{if } (p,q) = (\dfrac{m-1}{2}, \dfrac{m+1}{2})
              \ \text{ and $m$ is odd,} \\
   2  &  \text{if }  q = 1, p+1,m-p,m,  \\
      &  \quad \text{ except for } (p,q) = (\dfrac{m-1}{2}, \dfrac{m+1}{2})
          \text{ if $m$ is odd,} \\
   0  &  \text{otherwise}.
\end{cases}
\end{split}
\end{equation*}

 Next, we estimate the eigenvalue $\nu^{\prime (q)}_1(U_2,g_{p,L})$ from below.
 If $0$ is the eigenvalue for $s$-forms on $(N,h)$, 
 we denote it by $\lambda^{(s)}_0 (N,h).$
 From the K\"unneth formula for the eigenvalues of the Hodge-Laplacian
 (e.g., \cite{Gilkey-Leahy-Park[99]}, p.38, Example $1.5.7$), we have
\begin{equation} \label{eq:K"unneth-formula}
\begin{split}
 \nu^{\prime (q)}_1(U_2,g_{p,L})
  &\ge  \nu^{(q)}_1(U_2,g_{p,L})
   =  \nu^{(q)}_1([0,L] \times {\S}^p \times {\S}^{m-p-1})  \\
  &=  \min_{\substack{a+b=q \\ i+j \ge 1}} \Big\{ 
        \nu^{(a)}_i([0,L]) + \lambda^{(b)}_j({\S}^p \times {\S}^{m-p-1}) \Big\} \\
  &=  \min_{\substack{a+b=q \\ i+j \ge 1}} \Big\{ 
        L^{-2} \, \nu^{(a)}_i([0,1]) + \lambda^{(b)}_j({\S}^p \times {\S}^{m-p-1}) \Big\}. 
\end{split}
\end{equation}
 To proceed with the calculation, we consider whether or not $0$ is the eigenvalue
 on $[0,1]$ and ${\S}^p \times {\S}^{m-p-1}$.
 By the Hodge theory, this follows from the cohomology groups 
\begin{equation} \label{eq:cohomology}
\begin{split}
 H^a ([0,1];\R) &
\begin{cases}
  \neq  0   &   \text{if } a=0, \\
  =     0   &   \text{if } a=1,
\end{cases}  \\[0.2cm] 
  H^{b}({\S}^p \times {\S}^{m-p-1}; \R) &
\begin{cases}
  \neq  0   &   \text{if } b = 0, p, m-p-1, m-1, \\
  =     0   &   \text{otherwise}.
\end{cases}
\end{split}
\end{equation}

 If $q = p, m-p-1, m-1$, by \eqref{eq:cohomology}, 
 $0$ is the eigenvalue for $q$-forms on ${\S}^p \times {\S}^{m-p-1}$.
 Hence, we have for sufficiently large $L,$ 
\begin{equation*}
\begin{split}
 \nu^{\prime (q)}_1 (U_2,g_{p,L}) 
  &\ge  \min_{\substack{a+b=q \\ i+j \ge 1}} \Big\{ 
        L^{-2} \, \nu^{(a)}_i([0,1])
        + \lambda^{(b)}_j({\S}^p \times {\S}^{m-p-1}) \Big\}  \\ 
  &= \min_{j \ge 0} \Big\{ \, L^{-2} \underbrace{ \nu^{(0)}_0([0,1]) }_{=0} 
        + \lambda^{(q)}_1({\S}^p \times {\S}^{m-p-1}), \
       L^{-2} \nu^{(0)}_1([0,1]) + 
        \underbrace{ \lambda^{(q)}_0({\S}^p \times {\S}^{m-p-1}) }_{=0}, \\
  &\qquad \qquad  L^{-2} \nu^{(1)}_1([0,1]) 
        + \lambda^{(q-1)}_j({\S}^p \times {\S}^{m-p-1}) \, \Big\} \\
  &\ge  L^{-2} \min \Big\{ \, \nu^{(0)}_1([0,1]), \
           \nu^{(1)}_1([0,1]) \, \Big\}.
\end{split}
\end{equation*}
 Similarly, if $q = 1, p+1, m-p, m$, by \eqref{eq:cohomology}, 
 $0$ is the eigenvalue for $(q-1)$-forms on ${\S}^p \times {\S}^{m-p-1}$.
 Hence, we have for sufficiently large $L,$ 
\begin{equation*}
\begin{split}
 \nu^{\prime (q)}_1 (U_2,g_{p,L}) 
  &\ge  \min_{\substack{a+b=q \\ i+j \ge 1}} \Big\{ 
        L^{-2} \, \nu^{(a)}_i([0,1])
        + \lambda^{(b)}_j({\S}^p \times {\S}^{m-p-1}) \Big\}  \\ 
  &= \min_{j \ge 0} \Big\{ \, L^{-2} \underbrace{ \nu^{(0)}_0([0,1]) }_{=0} 
        + \lambda^{(q)}_1({\S}^p \times {\S}^{m-p-1}), \
       L^{-2} \nu^{(0)}_1([0,1]) + \lambda^{(q)}_j({\S}^p \times {\S}^{m-p-1}), \\
  &\qquad \qquad   L^{-2} \nu^{(1)}_1([0,1]) + 
        \underbrace{ \lambda^{(q-1)}_0({\S}^p \times {\S}^{m-p-1})}_{=0} \, \Big\} \\
  &\ge  L^{-2} \min \Big\{ \, \nu^{(0)}_1([0,1]), \
           \nu^{(1)}_1([0,1]) \, \Big\}.
\end{split}
\end{equation*}
 Therefore, if $q = 1, p, p+1, m-p-1, m-p, m-1, m$, from Lemma \ref{lem:McGowan},
 there exist positive constants $C_1, C_2 > 0$
 independent of $L$ such that 
\begin{equation*}
\begin{split}
  \lambda^{\prime (q)}_{n_q +1} ({\Bbb S}^{m}, g_{p,L})
      &\ge \dfrac{1}{C_1 L^2 + C_2}.
\end{split}
\end{equation*}

 If $q \neq  1, p, p+1, m-p-1, m-p, m-1, m$, by \eqref{eq:cohomology}, 
 $0$ is neither eigenvalue for $(q-1)$-forms nor for $q$-forms on
 ${\S}^p \times {\S}^{m-p-1}$.
 Hence, for any $L >0$, we have 
\begin{equation*}
\begin{split}
  \nu^{\prime (q)}_1(U_2,g_{p,L}) 
  &\ge \min \Big\{ \, \lambda^{(q)}_1({\S}^p \times {\S}^{m-p-1}), \
      L^{-2} \nu^{(0)}_1([0,1]) + \lambda^{(q)}_1({\S}^p \times {\S}^{m-p-1}), \\
  &\qquad \qquad  L^{-2} \nu^{(1)}_1([0,1]) + 
      \lambda^{(q-1)}_1({\S}^p \times {\S}^{m-p-1}) \Big\} \\
  &\ge \min \Big\{ \, \lambda^{(q)}_1({\S}^p \times {\S}^{m-p-1}), \
          \lambda^{(q-1)}_1({\S}^p \times {\S}^{m-p-1}) \Big\}  >0.
\end{split}
\end{equation*}
 In this case, we have $n_q = 0$.
 Thus, we obtain a lower bound $C >0$ independent of $L$ such that 
\begin{equation*}
\begin{split}
   \lambda^{\prime (q)}_1 ({\Bbb S}^{m}, g_{p,L}) \ge C >0.
\end{split}
\end{equation*}
\end{proof}


\section{General manifold}

\subsection{Gluing theorem}

 To prove Theorem \ref{thm:main-thm}, we need a gluing theorem for the eigenvalues
 on a connected sum.
 The gluing theorem we use here is obtained from the convergence of the eigenvalues 
 of the Laplacian, when one side of a connected sum of two closed Riemannian manifolds
 collapses to a point. We call it collapsing of connected sums.
 This was studied in the case of the Laplacian acting on functions in \cite{0-conv[02]},
 and in the case of the Hodge-Laplacian acting on $p$-forms in \cite{p-gap[03]}, 
 \cite{p-conv[12]}. We recall it.

 Let $(M_i,g_i)$, $i=1,2$, be connected oriented closed Riemannian manifolds of 
 the same dimension $m$ ($m \ge 2$). For simplicity, we suppose that each metric 
 $g_i$ is flat on the geodesic ball $B(x_i,r_i)$ with the radius $r_i>0$ centered 
 at $x_i \in M_i$, where $r_i$ is smaller than the injectivity radius of $(M_i,g_i)$. 
 By changing the scale of $g_2$, we may suppose $r_2 = 2$. 
 Set $M_i(r) := M_i \setminus B(x_i,r)$. 
 For any $\e > 0$ with $0< \e < \min \{ r_1, 1 \},$
 since both boundaries of $\partial (M_1(\e), g_{1})$ and 
 $\partial (M_2(1), {\e}^2 g_{2})$ are isometric to the $(m-1)$-dimensional sphere
 of radius $\e$ in ${\R}^m$,
 we glue $(M_1(\e),g_1)$ to $(M_2(1), {\e}^2 g_2)$ along their boundaries.
 After deforming $g_2$ on a neighborhood of $\partial M_2(1)$,
 we obtain the new closed {\bf smooth} Riemannian manifold 
\begin{equation} \label{eq:conn-sums}
\begin{split}
 (M, g_{\e}) &:= (M_1(\e),g_1) \cup_{\partial} (M_2(1), \e^2 g_2).
\end{split}
\end{equation}
 If we choose suitable orientations of $M_1$ and $M_2$, 
 $M$ is also oriented.

 From the construction of $(M, g_{\e})$, it is easy to see that 
\begin{equation} \label{eq:vol-conv}
\begin{split}
  \dlim_{\e \to 0} \vol(M, g_{\e}) &= \vol(M_1,g_1).
\end{split}
\end{equation}

 In our previous works \cite{p-conv[12]}, \cite{0-conv[02]}, we have the following 
 convergence theorem for the eigenvalues of the Hodge-Laplacian acting 
 on exact and co-exact $p$-forms.
 In fact, by considering the convergence of eigenforms, we find that 
 all the eigenvalues for exact and co-exact forms still converge. 

\begin{lem}\label{lem:p-conv}
 For all $k=1,2, \dots,$ we have 
\begin{equation*}
\begin{split}
  \lim_{\e \rightarrow 0} \lambda^{\prime (p)}_k(M,g_{\e})
     &= \lambda^{\prime (p)}_{k} (M_1,g_1),  \\
  \lim_{\e \rightarrow 0} \lambda^{\prime \prime (p)}_k(M,g_{\e})
     &= \lambda^{\prime \prime (p)}_{k} (M_1,g_1).
\end{split}
\end{equation*}
\end{lem}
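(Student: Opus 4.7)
The plan is to combine an upper bound coming from the min-max principle, with test forms built from eigenforms on $(M_1, g_1)$, with a lower bound obtained from the convergence of eigenforms already established in the previous works \cite{p-conv[12]} and \cite{0-conv[02]}. Since the exterior derivative $d$ intertwines the Hodge-Laplacian on co-exact $(p-1)$-forms with the Hodge-Laplacian on exact $p$-forms (and analogously for $\delta$), one has the identity $\lambda^{\prime (p)}_k = \lambda^{\prime \prime (p-1)}_k$ on any closed manifold, so it suffices to treat, say, the co-exact case at every degree.

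For the upper bound, I would take the first $k$ co-exact $p$-eigenforms $\psi_1, \dots, \psi_k$ on $(M_1,g_1)$, write each as $\psi_j = \delta \omega_j$ for some smooth $(p+1)$-form $\omega_j$, and form cutoff versions $\tilde{\psi}_j := \delta(\chi_\e \omega_j)$, where $\chi_\e$ is a smooth function vanishing on $B(x_1, 2\e)$ and equal to $1$ outside $B(x_1, 3\e)$. Extended by zero across the gluing interface these produce $k$ linearly independent co-exact test forms on $(M, g_\e)$. Because $g_1$ is flat on $B(x_1, r_1)$ and the $\omega_j$ are smooth, the correction terms produced by the cutoff have $L^2$-norms tending to zero with $\e$, so the Rayleigh quotients converge to the eigenvalues on $(M_1, g_1)$, and the min-max principle applied on the space of co-exact $p$-forms yields $\limsup_{\e \to 0} \lambda^{\prime\prime (p)}_k(M, g_\e) \le \lambda^{\prime\prime (p)}_k(M_1, g_1)$.

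For the lower bound, I would invoke the convergence of eigenforms from \cite{p-conv[12]}: for any normalized sequence of co-exact $p$-eigenforms $\psi_\e$ on $(M, g_\e)$ with $\lambda_\e \to \lambda$, a weakly convergent subsequence has a limit $\psi_0$ that is an eigenform of $\Delta$ on $(M_1, g_1)$ with eigenvalue $\lambda$. To see that $\psi_0$ remains co-exact, observe that each $\psi_\e$ is $L^2$-orthogonal to the finite-dimensional space of harmonic $p$-forms on $(M, g_\e)$; by the convergence of harmonic forms established in the same reference, this orthogonality passes to the limit and forces $\psi_0$ to be co-exact on $(M_1, g_1)$. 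Applying this argument to an $L^2$-orthonormal sequence of eigenforms associated with $\lambda^{\prime\prime (p)}_1(M, g_\e), \dots, \lambda^{\prime\prime (p)}_k(M, g_\e)$ yields $k$ linearly independent co-exact eigenforms on $(M_1, g_1)$ with eigenvalues bounded by $\liminf_{\e \to 0} \lambda^{\prime\prime (p)}_k(M, g_\e)$, giving the reverse inequality.

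The main obstacle, and the point that genuinely requires the collapsing picture, is to rule out mass concentration of the $\psi_\e$ in the shrinking part $(M_2(1), \e^2 g_2)$; otherwise the weak limit on $M_1$ could be trivial and both the orthogonality and the linear-independence arguments would fail. Rescaling back to the fixed metric $g_2$ turns the eigenform equation into $\Delta_{g_2} \psi_\e = \e^2 \lambda_\e \, \psi_\e$, so the eigenvalue in the rescaled picture tends to $0$; Rellich compactness on the fixed manifold $(M_2(1), g_2)$ extracts a harmonic limit on $M_2$, which, combined with the orthogonality to harmonic forms inherited from co-exactness, must vanish. Together with Rellich compactness on compact subsets of $M_1 \setminus \{x_1\}$, this completes the convergence of eigenforms with their Hodge type preserved, and hence finishes the proof.
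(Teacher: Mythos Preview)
The paper does not give a self-contained proof here; it quotes \cite{p-conv[12]} and \cite{0-conv[02]} for the convergence of the full Hodge $p$-spectrum together with the eigenforms, and adds only the remark that this eigenform convergence respects the Hodge decomposition, so the exact and co-exact spectra converge separately. Your outline has the right architecture, but two steps fail as written. In the upper bound, the test forms $\tilde\psi_j=\delta(\chi_\e\omega_j)$ force you to compute $d\tilde\psi_j$, which involves \emph{second} derivatives of $\chi_\e$ through the term $d\big(i_{\grad\chi_\e}\omega_j\big)$; with a cut-off supported on an annulus of width comparable to $\e$ these contribute a term of order $\e^{m-4}$ to the numerator, which does not tend to zero for $m\le4$. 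The route in \cite{p-conv[12]} avoids this by proving convergence of the \emph{unconstrained} Hodge spectrum first (where only first derivatives of the cut-off enter) and then reading off the exact/co-exact refinement from the limits of the eigenforms.

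More seriously, your mechanism for ruling out mass on $M_2$ is incorrect. Co-exactness of $\psi_\e$ on $(M,g_\e)$ is a \emph{global} orthogonality to harmonic $p$-forms on $(M,g_\e)$; it does not imply that the restriction of $\psi_\e$ to $M_2(1)$ is orthogonal to harmonic forms on $(M_2(1),g_2)$ under any boundary condition, so the harmonic limit you extract on $M_2$ has no reason to vanish from this alone. The actual argument (compare the rough-Laplacian analogue in Lemmas~\ref{lem:boundary-value-estimate} and \ref{lem:phi^2=0} of the Appendix) uses a trace estimate on $\partial M_1(\e)$ to show that the rescaled limit on $M_2(1)$ has vanishing boundary data, and then the limiting equation forces it to be zero. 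A smaller point in the same spirit: orthogonality to harmonics on $M_1$ is also insufficient to conclude that the limit $\psi_0$ is co-exact (exact forms are orthogonal to harmonics too); the correct observation is that $\delta_{g_\e}\psi_\e=0$ passes to the weak limit, so $\psi_0$ is co-closed, and a co-closed eigenform with positive eigenvalue is automatically co-exact.
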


 We also have the convergence of the eigenvalues of the rough Laplacian 
 acting on $p$-forms.

\begin{thm}
 For all $k=1,2, \dots,$ we have
\begin{equation*}
\begin{split}
  \lim_{\e \rightarrow 0} \overline{\lambda}^{(p)}_k(M,g_{\e}) &= 
  \overline{\lambda}^{(p)}_{k} (M_1,g_1).
\end{split}
\end{equation*}
\end{thm}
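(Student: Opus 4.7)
The plan is to follow the same strategy as for the Hodge Laplacian in Lemma \ref{lem:p-conv} and \cite{p-conv[12]}: a min-max upper bound together with a concentration-plus-compactness lower bound, modified to accommodate the rough Laplacian. Two new difficulties must be addressed: $\overline{\Delta}$ does not respect the exact/coexact decomposition, and the Weitzenb\"ock term $F_p$ in \eqref{eq:Weitzenbock-formula} blows up like $\e^{-2}$ on the collapsing side, so the rough convergence cannot be reduced to the Hodge one via \eqref{eq:Weitzenbock-formula} by a naive perturbation.

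For the upper bound $\limsup_{\e\to 0}\overline{\lambda}^{(p)}_k(M,g_\e)\le\overline{\lambda}^{(p)}_k(M_1,g_1)$, I would take the first $k$ eigenforms $\vphi_1,\dots,\vphi_k$ of $\overline{\Delta}_{g_1}$ on $M_1$, multiply each by a logarithmic cut-off $\eta_\e$ supported in the flat ball $B(x_1,r_1)$ (equal to $1$ outside $B(x_1,\sqrt{\e})$ and to $0$ on $B(x_1,\e)$) whose total Dirichlet energy $\int_{M_1}|d\eta_\e|_{g_1}^2\,d\mu_{g_1}$ vanishes as $\e\to 0$ --- such a cut-off exists because $\{x_1\}$ has zero $2$-capacity in dimension $m\ge 2$ --- and extend the truncated forms by zero across the gluing. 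A direct calculation then shows that the Rayleigh quotient $\|\nabla(\eta_\e\vphi_i)\|^2_{L^2(g_\e)}/\|\eta_\e\vphi_i\|^2_{L^2(g_\e)}$ converges to $\overline{\lambda}^{(p)}_i(M_1,g_1)$, and the min-max principle for $\overline{\Delta}_{g_\e}$ delivers the upper bound.

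For the lower bound, let $(\vphi_\e^j)_{j=1}^k$ be $L^2$-orthonormal eigenforms of $\overline{\Delta}_{g_\e}$ with eigenvalues $\lambda_\e^j$ bounded by the upper bound. Interior elliptic regularity on compact subsets of $M_1\setminus\{x_1\}$ gives uniform $C^\infty$-bounds, so a diagonal extraction produces a subsequence and a limit form $\vphi^j\in C^\infty(M_1\setminus\{x_1\})$ with $\overline{\Delta}_{g_1}\vphi^j = \lambda^j\vphi^j$, where $\lambda^j=\lim\lambda_\e^j$. The uniform $H^1$-bound on $\vphi^j$ near $x_1$ and the zero $2$-capacity of a point then extend $\vphi^j$ to a smooth eigenform on all of $M_1$. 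The pivotal step is the concentration estimate
\begin{equation*}
  \lim_{\e\to 0}\|\vphi_\e^j\|^2_{L^2(M_2(1),\,\e^2 g_2)}=0,
\end{equation*}
which propagates orthonormality to the limit family $(\vphi^j)$ on $(M_1,g_1)$ and therefore yields $\overline{\lambda}^{(p)}_k(M_1,g_1)\le\liminf_{\e\to 0}\lambda_\e^k$. This estimate rests on the scaling identity $\mu_1^{\mathrm{Dir},(p)}(M_2(1),\e^2 g_2) = \e^{-2}\,\mu_1^{\mathrm{Dir},(p)}(M_2(1),g_2)$ for the first Dirichlet eigenvalue of the rough Laplacian together with its positivity --- any parallel form vanishing on $\partial M_2(1)$ is identically zero by constancy of its pointwise norm --- combined with an IMS-type localization using cut-offs placed in the flat annular neck of $M_1$ near $x_1$.

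The main obstacle is this concentration step. Any cut-off supported strictly on the collapsing $M_2$-side produces a gradient contribution of the same order $\e^{-2}$ as the Dirichlet spectral gap, so a naive Poincar\'e inequality yields only a uniform bound rather than vanishing. The resolution is to place the cut-off transition in the flat annulus $B(x_1,r_1)\setminus B(x_1,\e)$ of $(M_1,g_1)$, where logarithmic cut-offs have total Dirichlet energy vanishing with $\e$; paired with Green's identity applied to $\vphi_\e^j$ and the Dirichlet Poincar\'e inequality on the remaining $M_2$-piece, this gives the required $o(1)$ bound. A secondary subtlety is the removable-singularity step: because $\overline{\Delta}$ couples form components via the Levi-Civita connection, one must verify that uniform $H^1$-control extends the weak eigenform equation across the puncture $x_1$, which follows from the ellipticity of $\overline{\Delta}_{g_1}$.
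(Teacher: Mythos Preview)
Your overall architecture matches the paper's: an upper bound by transplanting cut-off eigenforms of $(M_1,g_1)$ via the min-max principle, and a lower bound by extracting limits of eigenforms on $M_1\setminus\{x_1\}$, extending them across the puncture, and showing no $L^2$-mass escapes to the $M_2$ side. The upper bound and the removable-singularity step are essentially as in the paper.

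The gap is in your concentration step. You invoke the scaling identity $\mu_1^{\mathrm{Dir},(p)}(M_2(1),\e^2 g_2)=\e^{-2}\mu_1^{\mathrm{Dir},(p)}(M_2(1),g_2)$, but this Dirichlet Poincar\'e inequality is not applicable: the eigenform $\vphi_\e$ restricted to $M_2(1)$ does \emph{not} vanish on $\partial M_2(1)$. Your fix is to localise by a cut-off $\chi$ supported in $M_2(1)\cup A_\e$ with $A_\e\subset M_1(\e)$ the flat annulus, so that $\chi\vphi_\e$ has Dirichlet conditions at the \emph{outer} edge of $A_\e$. But then the relevant Poincar\'e constant is $\mu_1^{\mathrm{Dir},(p)}$ of the enlarged domain $(M_2(1),\e^2 g_2)\cup(A_\e,g_1)$, and this is no longer a pure rescaling of a fixed domain (the rescaled annulus has outer radius $\e^{-1/2}\to\infty$), so the clean $\e^{-2}$ scaling you cite does not apply. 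One can show this eigenvalue still diverges, but that is a separate estimate you do not provide. Moreover, even with the IMS identity $\|\nabla(\chi\vphi_\e)\|^2=\lambda_\e\|\chi\vphi_\e\|^2+\int|d\chi|^2|\vphi_\e|^2$, the commutator term $\int|d\chi|^2|\vphi_\e|^2$ is \emph{not} controlled merely by the vanishing of $\int|d\chi|^2$: one needs a pointwise or trace bound on $\vphi_\e$ over the thin annulus, which is exactly the content of the paper's key technical Lemma~\ref{lem:boundary-value-estimate}.

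The paper circumvents both issues as follows. Writing $\vphi_\e=(\vphi^1_\e,\e^{p-m/2}\vphi^2_\e)$, the quadratic-form identity \eqref{eq:ev-quadratic} gives directly $\|\nabla\vphi^2_\e\|^2_{L^2(M_2(1),g_2)}\le\e^2\overline{\lambda}^{(p)}_k(\e)\to 0$, so any $H^1$-weak limit $\vphi^2$ is parallel (Lemma~\ref{lem:phi^2-parallel}). The boundary trace estimate (Lemma~\ref{lem:boundary-value-estimate}) on the $M_1$ side, combined with the gluing condition \eqref{eq:gluing-cond}, then forces $\vphi^2|_{\partial M_2(1)}=0$; since a parallel form with zero boundary value vanishes identically, $\vphi^2=0$ (Lemma~\ref{lem:phi^2=0}). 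This replaces your Dirichlet--Poincar\'e argument by a compactness-plus-trace argument and isolates the single analytic ingredient (Lemma~\ref{lem:boundary-value-estimate}) that your sketch is missing.
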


 In fact, in the same way as the proof of Theorem $4.4$ in \cite{p-gap[03]}, p.$21$,
 we see the upper bound for the eigenvalues of the rough Laplacian 
 acting on $p$-forms.

\begin{lem} \label{lem:upper-bound}
 For all $k=1,2, \dots,$ we have
\begin{equation*}
\begin{split}
  \limsup_{\e \rightarrow 0} \overline{\lambda}^{(p)}_k(M,g_{\e}) &\le 
  \overline{\lambda}^{(p)}_{k} (M_1,g_1).
\end{split}
\end{equation*}
\end{lem}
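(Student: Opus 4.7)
The plan is to apply the min-max principle to a $k$-dimensional trial subspace for $\overline{\Delta}_{g_\e}$ built out of the first $k$ eigenforms of $\overline{\Delta}_{g_1}$ on the larger piece $(M_1, g_1)$, cut off so that they vanish before reaching the $M_2$-side of the connected sum. This is the upper-semicontinuity strategy already exploited in Lemma \ref{lem:k-ev-estimate} and in Theorem $4.4$ of \cite{p-gap[03]}.

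Fix $k \ge 1$ and let $\varphi_1, \dots, \varphi_k \in \Omega^p(M_1)$ be an $L^2(M_1, g_1)$-orthonormal family of smooth eigen-$p$-forms of $\overline{\Delta}_{g_1}$ associated with the eigenvalues $\overline{\lambda}^{(p)}_1(M_1, g_1) \le \cdots \le \overline{\lambda}^{(p)}_k(M_1, g_1)$. For every sufficiently small $\e > 0$ I would choose a smooth cutoff $\chi_\e \colon M \to [0,1]$ that vanishes on the $M_2$-side $(M_2(1), \e^2 g_2)$ together with a thin collar of the gluing boundary inside $(M_1(\e), g_1)$, equals $1$ outside a geodesic $g_1$-ball $B(x_1, r_\e) \subset M_1$ with $r_\e \to 0$, and satisfies
\[
\|d\chi_\e\|^2_{L^2(M, g_\e)} \longrightarrow 0 \quad \text{as } \e \to 0.
\]
Such a family is standard: for $m \ge 3$ a piecewise-linear radial profile on an annulus of radii $\e$ and $\e^{2/m}$ suffices, while for $m = 2$ one uses the logarithmic profile, which exploits the vanishing of the Newtonian capacity of a point in $\R^2$. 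Extending each $\varphi_i$ by zero via the identification of $M_1(\e)$ with a subset of $M$, set $\widetilde\varphi_i := \chi_\e \varphi_i$; the space $V_\e := \mathrm{span}(\widetilde\varphi_1, \dots, \widetilde\varphi_k)$ is then $k$-dimensional for $\e$ small, since $\chi_\e$ equals $1$ on a region where $\varphi_1, \dots, \varphi_k$ remain linearly independent.

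For an arbitrary $\varphi = \sum_i c_i \varphi_i \in V_k := \mathrm{span}(\varphi_1, \dots, \varphi_k)$ normalised by $\|\varphi\|_{L^2(M_1, g_1)} = 1$, the product rule $\nabla(\chi_\e \varphi) = \chi_\e \nabla \varphi + d\chi_\e \otimes \varphi$ together with $|a+b|^2 \le (1+\eta)|a|^2 + (1+\eta^{-1})|b|^2$ gives, for any $\eta > 0$,
\[
\|\nabla(\chi_\e \varphi)\|^2_{L^2(M, g_\e)} \le (1+\eta) \int_{M_1} \chi_\e^2 |\nabla \varphi|^2\, d\mu_{g_1} + (1+\eta^{-1}) C_k^2 \, \|d\chi_\e\|^2_{L^2(M, g_\e)},
\]
where $C_k := \sup\{ \|\varphi\|_\infty : \varphi \in V_k,\ \|\varphi\|_{L^2(M_1, g_1)} = 1 \} < \infty$ by smoothness of $\varphi_1, \dots, \varphi_k$ on the closed manifold $M_1$. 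Since $\chi_\e \to 1$ pointwise on $M_1 \setminus \{x_1\}$, dominated convergence yields $\|\chi_\e \varphi\|^2_{L^2(M, g_\e)} \to 1$ and $\int_{M_1} \chi_\e^2 |\nabla \varphi|^2 \, d\mu_{g_1} \to \|\nabla \varphi\|^2_{L^2(M_1, g_1)} \le \overline{\lambda}^{(p)}_k(M_1, g_1)$, both uniformly on the unit sphere of $V_k$ by finite-dimensionality. Feeding these bounds into the min-max principle and letting $\eta \to 0$ after $\e \to 0$ produces
\[
\limsup_{\e \to 0} \overline{\lambda}^{(p)}_k(M, g_\e) \le \overline{\lambda}^{(p)}_k(M_1, g_1),
\]
which is the claim. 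The only real technicality is the cutoff construction with small Dirichlet energy, which is delicate in dimension two; this point is however classical in the collapsing-connected-sum literature and is already established in \cite{0-conv[02], p-conv[12], p-gap[03]}, so the present proof can import it essentially verbatim.
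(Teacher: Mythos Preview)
Your proposal is correct and follows essentially the same approach as the paper: both transplant the first $k$ eigenforms of $\overline{\Delta}_{g_1}$ from $(M_1,g_1)$ to $(M,g_\e)$ via a radial cutoff with vanishing Dirichlet energy and then apply the min-max principle. The only cosmetic difference is that the paper uses the single logarithmic cutoff \eqref{eq:cut-off-func} in all dimensions $m\ge 2$ (and defers the Rayleigh-quotient estimate to \cite[Theorem~4.4]{p-gap[03]}), whereas you distinguish a piecewise-linear profile for $m\ge 3$ from the logarithmic one for $m=2$ and spell out the Young-inequality/dominated-convergence step explicitly.
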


 On the other hand, we will give the proof of the lower bound for the eigenvalues 
 of the rough Laplacian acting on $p$-forms in Section $6$, Appendix.

\begin{proof}[Proof of Lemma $\ref{lem:upper-bound}$]
 To prove Lemma \ref{lem:upper-bound}, we use a standard cut-off argument 
 for the min-max principle for eigenvalues of the rough Laplacian.
 Let $\{ \varphi_1, \dots, \varphi_k \}$ be an $L^2(M_1,g_1)$-orthonormal system
 of the eigen $p$-forms of the rough Laplacian on $(M_1,g_1)$ associated with 
 the eigenvalue $\overline{\lambda}^{(p)}_j(M_1,g_1)$ for $j = 1,2, \dots, k$.
 We take a cut-off function $\chi_{\e}(r)$ on $M_1$ defined as
\begin{eqnarray}  \label{eq:cut-off-func}
  \chi_{\varepsilon}(r) :=
\begin{cases}
   \qquad  0   &   \   ( 0 \le r \le \e ),  \\[0.2cm]
   - \dfrac{2}{\log \e}  \log\left( \dfrac{r}{\e} \right) 
               &   \   ( \e \le r \le \sqrt{\e}),  \\[0.2cm]
   \qquad  1   &   \   ( \sqrt{\e} \le r),
\end{cases}
\end{eqnarray}
 where $r$ is the Riemannian distance from $x_1 \in M_1$ with respect to $g_1.$ 
 We take a linear subspace $E_{\e}$ in $H^1 (\Lambda^p M, g_1)$ spanned by
 $\{ \chi_{\e} \varphi_1, \dots, \chi_{\e} \varphi_k  \},$ 
 and we see $\dim E_{\e} = k$.
 If we take this subspace $E_{\e}$ as a test $k$-dimensional subspace
 for the min-max principle for the eigenvalues of the rough Laplacian
 acting on $p$-forms, we obtain 
\begin{equation} \label{eq:upper-bdd}
\begin{split}
 \overline{\lambda}^{(p)}_{k}(M,g_{\e})
  &\le  \sup_{\vphi_{\e} \neq 0 \in E_{\e}} 
        \left\{ \dfrac{ \| \nabla \vphi_{\e} \|^2_{L^2(M,g_{\e})} }
              { \quad \| \vphi_{\e} \|^2_{L^2(M,g_{\e})} } \right\}  
   \le   \overline{\lambda}^{(p)}_{k}(M_1,g_1) + \delta(\e),
\end{split}
\end{equation} 
 where $\delta(\e) \to 0$ as $\e \to 0$.
 For details, see the proof of Theorem $4.4$ in \cite{p-gap[03]}, p.$21$.
\end{proof}


\subsection{Proof of Theorem \ref{thm:main-thm}}

 We prove Theorem \ref{thm:main-thm} for a general manifold $M$.
 The main idea is to perform a connected sum of this manifold $M$ and 
 the sphere ${\Bbb S}^m$ equipped with the Riemannian metric constructed in 
 Theorem \ref{thm:sphere}.

\begin{proof}[{\it Proof of Theorem $\ref{thm:main-thm}$}]
 Let $M^m$ be a connected oriented closed $C^{\infty}$-manifold of
 dimension $m \ge 2$.
 We fixed a degree $p$ with $1 \le p \le m.$
 We take any smooth Riemannian metric $g_2$ on $M$ such that $g_2$ is flat 
 on the geodesic ball $B(x_2, 2)$ with the radius $2$ centered at $x_2 \in M$.

 For any $\eta >0$ and any index $k \geq 1$, from Theorem \ref{thm:sphere},
 there exists some $L_p >0$ such that for all $L > L_p$, 
\begin{equation} \label{eq:small-eta-ev}
\begin{split}
  \overline{\lambda}^{(p)}_k ({\Bbb S}^m, \overline{g}_{p,L}) < \dfrac{\eta}{2}
   \quad  \text{ and } \quad 
  \lambda^{\prime \prime (p)}_k({\Bbb S}^m, \overline{g}_{p,L}) < \dfrac{\eta}{2},
\end{split}
\end{equation}
 where $\overline{g}_{p,L}$ is the volume normalized Riemannian metric 
 on ${\Bbb S}^m$ constructed in Theorem \ref{thm:sphere}. 
 By the continuity of the eigenvalues of the rough and Hodge Laplacians 
 acting on co-exact forms in the $C^0$-topology of Riemannian metrics
 (see \cite{Dodziuk[82]}, \cite[p.297]{Craioveanu-Puta-Rassias[01]}),
 after $C^0$-perturbation of $\overline{g}_{p,L}$,
 we may suppose that $\overline{g}_{p,L}$ is flat on a small geodesic ball
 $B(x_1, r_1)$  with the radius $r_1 >0$ centered at $x_1 \in {\Bbb S}^m$
 such that \eqref{eq:small-eta-ev} still holds.

 Now, we set $(M_1,g_1) := ({\Bbb S}^m, \overline{g}_{p,L})$ and
 $(M_2, g_2) : = (M,g).$
 By the construction of collapsing of the connected sum $(M, g_{\e})$
 from $(M_1,g_1)$ and $(M_2, g_2)$ as in Subsection $5.1$ \eqref{eq:conn-sums},
 we obtain a family of Riemannian metrics $g_{\e}$ on the connected sum 
 $M \cong {\Bbb S}^m \sharp M$ such that
\begin{equation*} \label{eq:sphere-conn-sum}
\begin{split}
 (M, g_{\e}) &:= ({\Bbb S}^m(\e), \overline{g}_{p,L}) \cup_{\partial} (M(1), \e^2 g).
\end{split}
\end{equation*}

\begin{figure}[H] 
  \centering
\begin{tikzpicture}[scale=0.6]
  \draw (0,0) arc (10:349:2.5);
  \draw (0,0) .. controls (-0.25,-0.25) 
    and (-0.25,-0.75) .. (0,-0.9);
  \draw [dotted] (0,0) .. controls (0.25,-0.25) 
    and (0.25,-0.75) .. (0,-0.9);
  \node at (-2.5,-3.5) {$({\Bbb S}^m(\e), \overline{g}_{p,L})$};
  \draw (0,0) .. controls (0.2,-0.3) 
    and (0.5,-0.3) .. (0.7,-0.1);
  \draw (0.7,-0.1) .. controls (0.9,0.3) 
    and (0.5,0.5) .. (0.5,0.7);
  \draw (0.5,0.7) .. controls (0.5,1.1) 
    and (1.4,1.1) .. (1.5,0.7);
  \draw (1.5,0.7) .. controls (1.6,0.5) 
    and (1.3,0.3) .. (1.2,0.0);
  \draw (1.2,0.0) .. controls (1.1,-0.2) 
    and (1.2,-0.5) .. (1.2,-0.5);
  \draw (1.2,-0.5) .. controls (1.3,-0.7) 
    and (1.5,-0.9) .. (1.6,-1.1);
  \draw (1.6,-1.1) .. controls (1.7,-1.3) 
    and (1.8,-1.6) .. (1.5,-1.7);
  \draw (0,-0.9) .. controls (0.2,-0.7) 
    and (0.5,-0.5) .. (0.7,-0.8);
  \draw (0.7,-0.8) .. controls (0.8,-1.0) 
    and (0.5,-1.2) .. (0.6,-1.5);
  \draw (0.6,-1.5) .. controls (0.6,-1.6) 
    and (0.8,-1.7) .. (1.0,-1.75);
  \draw (1.0,-1.75) .. controls (1.2,-1.8) 
    and (1.3,-1.8) .. (1.5,-1.7);
  \draw (0.8,-1.2) .. controls (0.9,-1.4) 
    and (1.1,-1.5) .. (1.4,-1.25);
  \draw (0.85,-1.3) .. controls (0.9,-1.2) 
    and (1.1,-1.1) .. (1.35,-1.3);
  \draw (0.7,0.7) .. controls (0.8,0.5) 
    and (1.1,0.3) .. (1.3,0.7);
  \draw (0.75,0.6) .. controls (0.9,0.7) 
    and (1.1,0.7) .. (1.25,0.6);
  \node at (1.2,-2.5) {$(M(1), \e^2 g)$};
\end{tikzpicture}
 \caption{$(M,g_{p,\varepsilon})$}
\end{figure}

 From Lemmas \ref{lem:p-conv} and \ref{lem:upper-bound}, 
 there exists some $\e_0 > 0$ such that for any $\e < \e_0$, 
\begin{equation} \label{eq:ev-conv-eta}
\begin{split}
 0 \le \overline{\lambda}^{(p)}_k(M, g_{\e}) &\le
      \overline{\lambda}^{(p)}_k({\Bbb S}^m,\overline{g}_{p,L}) 
      + \dfrac{\eta}{2}, \\
 0 < \lambda^{\prime \prime (p)}_k (M, g_{\e}) &\le
      \lambda^{\prime \prime (p)}_k({\Bbb S}^m, \overline{g}_{p,L}) 
      + \dfrac{\eta}{2}.
\end{split}
\end{equation}
 By substituting \eqref{eq:small-eta-ev} to \eqref{eq:ev-conv-eta}, we obtain 
\begin{equation*}
\begin{split}
 0 \le \overline{\lambda}^{(p)}_k(M, g_{\e}) &< \eta  \quad \text{ and } \quad 
       0 < \lambda^{\prime \prime (p)}_k (M, g_{\e}) < \eta.
\end{split}
\end{equation*}

 Finally, we normalize a Riemannian metric $g_{\e}$ on $M$. 
 If we set a new Riemannian metric 
\begin{equation*}
\begin{split}
  \overline{g}_{\e} &:= \vol(M, g_{\e})^{-\frac{2}{m}} g_{\e}, 
\end{split}
\end{equation*}
 then $ \vol(M, \overline{g}_{\e}) \equiv 1.$
 From the volume convergence \eqref{eq:vol-conv}, there exists smaller
 $\e_1 \le \e_0$, if necessary, such that for all $\e < \e_1,$
  $\vol(M, g_{\e}) \le  2.$
 Therefore, we have 
\begin{equation*} 
\begin{split}
 \overline{\lambda}^{(p)}_k (M, \overline{g}_{\e})
  &=  \overline{\lambda}^{(p)}_k (M, g_{\e}) \cdot 
          \vol(M, g_{\e})^{\frac{2}{m}} 
   <  \eta \cdot 2^{\frac{2}{m}},
\end{split}
\end{equation*}
 and similarly  
 $\lambda^{\prime \prime (p)}_k (M, \overline{g}_{\e}) < \eta \cdot 2^{\frac{2}{m}}.$
 Since $\eta > 0$ is arbitrary, 
 we have finished the proof of Theorem \ref{thm:main-thm}.
\end{proof}


\subsection{Proof of Theorem \ref{thm:unifomly-main}}

\begin{proof}[Proof of Theorem $\ref{thm:unifomly-main}$]
 We prove it in the same way as in the proof of Theorem \ref{thm:main-thm}.
 Let $M^m$ be a connected oriented closed $C^{\infty}$-manifold of
 dimension $m \ge 2$.
 We take a smooth Riemannian metric $g$ on $M$ such that $g$ is flat 
 on $(m-1)$ disjoint geodesic balls $B(x_i,2)$ for $i=1,2, \dots, m-1$ 
 with the radius $2$, centered at distinct $(m-1)$ points $x_i \in M$.
 By rescaling of $g$, we can do this.

 For any $\eta >0$ and any index $k \geq 1$, as in the proof of Theorem \ref{thm:main-thm},
 we can take a positive number $L_p >0$ safisfying that 
 the inequalities \eqref{eq:small-eta-ev} hold.
 For all $L > \displaystyle \max_{p=1,2,\dots,m-1} L_p$, 
 the inequalities \eqref{eq:small-eta-ev} hold uniformly for all degrees
 $p=1,2,\dots,m-1.$

 We consider now the $(m-1)$ spheres 
 $(\Bbb S^m, \overline{g}_{1,L})$, $(\Bbb S^m, \overline{g}_{2,L})$,
 $ \dots, (\Bbb S^m, \overline{g}_{m-1,L})$ and fix a point
 $x_0 \in {\Bbb S}^m$, it defines on each of them a point $x_{0,p}$.
 From the continuity of the eigenvalues of the rough and Hodge 
 Laplacians acting on co-exact forms in the $C^0$-topology of Riemannian
 metrics, we may suppose that each metric $\overline{g}_{p,L}$ on ${\Bbb S}^m$
 for $p=1,2,\dots,m-1$ is flat on each geodesic ball $B(x_{0,p}, r_1)$ 
 centered at $x_{0,p}$ with radius $r_1 >0$ small enough, 
 such that the inequalities \eqref{eq:small-eta-ev} still hold.

 We now perform the connected sum of $M$ and these $(m-1)$ spheres 
 $(\Bbb S^m, \overline{g}_{1,L})$, $(\Bbb S^m, \overline{g}_{2,L})$,
 $ \dots, (\Bbb S^m, \overline{g}_{m-1,L}),$ where $x_p\in M$ is related to
 $x_{0,p}\in \Bbb S^m$ equipped with the metric $\overline{g}_{p,L}$
 (See Figure $5$).
  We denote the resulting manifold by
\begin{multline*} \label{eq:m-1-conn-sums}
 (M, g_{\e}) := \Big( ({\Bbb S}^m(\e), \overline{g}_{1,L}) \sqcup ({\Bbb S}^m(\e), \overline{g}_{2,L})
   \sqcup \dots \sqcup ({\Bbb S}^m(\e), \overline{g}_{m-1,L}) \Big) \\
    \cup_{\partial} (M \setminus \sqcup^{m-1}_{p=1} B(x_{p}, 1), \e^2 g),
\end{multline*}
 where ${\Bbb S}^m(\e) := {\Bbb S}^m \setminus B(x_0, \e)$.
 After smoothing the Riemannian metric $g$ on each neighborhood of
 $\partial (M \setminus B(x_{p},1))$ for $p=1,2,\dots,m-1$,
 we obtain a family of closed {\bf smooth} Riemannian manifolds $(M, g_{\e})$ for $\e>0$.

\begin{figure}[H]
  \centering
\begin{tikzpicture}[scale=0.8]
 \draw (-1.0,1.0) .. controls (-0.5,1.45) 
    and (0.5,1.45) .. (0.9,1.1);
 \draw (1.0,1.0) .. controls (1.5,0.7) 
    and (0.75,0.25) .. (0.8,0);
 \draw (0.8,0) .. controls (0.75,-0.25) 
    and (1.5,-0.25) .. (1.1,-0.9);
 \draw (-1.1,0.9) .. controls (-1.45,0.7) 
    and (-0.75,0.25) .. (-0.8,0);
 \draw (-0.8,0) .. controls (-0.75,-0.25) 
    and (-1.45,-0.25) .. (-1.0,-1.0);
 \draw (-0.9,-1.1) .. controls (-0.5,-1.45) 
    and (0.5,-1.45) .. (1.0,-1.0);
  \draw (-0.4,-0.1) .. controls (-0.2,0.2) 
    and (0.2,0.2) .. (0.4,-0.1);
  \draw (-0.5,0) .. controls (-0.2,-0.3) 
    and (0.2,-0.3) .. (0.5,0);
   \node at (2.2,0) {$(M, \e^2 g)$};
  \draw (1.0,1.0) arc (235:584:1.0); 
  \node[above] at (1.8,2.7) {$({\Bbb S}^m(\e), \overline{g}_{1,L})$};
  \draw (-1.0,1.0) arc (320:665:1.0);
   \node[above] at (-1.8,2.7) {$({\Bbb S}^m(\e), \overline{g}_{2,L})$};
  \draw (1.0,-1.0) arc (145:495:1.0);
   \node[below] at (1.8,-2.8) {$({\Bbb S}^m(\e), \overline{g}_{p,L})$};
  \draw (-1.0,-1.0) arc (50:405:1.0);
   \node[below] at (-1.8,-2.8) {$({\Bbb S}^m(\e), \overline{g}_{p-1,L})$};
\end{tikzpicture}
 \caption{$(M,g_{\varepsilon})$}
\end{figure}

 For this $(M, g_{\e})$, we find that the same statement as in Lemma 
 \ref{lem:upper-bound} holds for the Hodge-Laplacian and rough Laplacian.
 In fact, we take the same cut-off function as in \eqref{eq:cut-off-func}
 for each component, and estimate the Rayleigh-Ritz quotients from above.
 Since a contribution from each cut-off function is within its own component,
 we obtain the same estimate as in \eqref{eq:upper-bdd}.
 
 Therefore, there exists some $\e_0 >0$ such that for any $\e < \e_0$ 
 and $p=1,2,\dots,m-1$ 
\begin{equation} \label{eq:ev-conv-eta-2}
\begin{split}
 0 \le \overline{\lambda}^{(p)}_k(M, g_{\e}) &\le
      \overline{\lambda}^{(p)}_k({\Bbb S}^m, \overline{g}_{p,L}) 
      + \dfrac{\eta}{2}, \\
 0 < \lambda^{\prime \prime (p)}_k (M, g_{\e}) &\le
      \lambda^{\prime \prime (p)}_k({\Bbb S}^m, \overline{g}_{p,L}) 
      + \dfrac{\eta}{2}.
\end{split}
\end{equation}
 By substituting the same inequalities as in \eqref{eq:small-eta-ev} to 
 \eqref{eq:ev-conv-eta-2}, for small $\e >0$, we obtain 
\begin{equation*}
\begin{split}
 0 \le \overline{\lambda}^{(p)}_k(M, g_{\e}) &< \eta  \quad \text{ and } \quad 
       0 < \lambda^{\prime \prime (p)}_k (M, g_{\e}) < \eta
\end{split}
\end{equation*}
 for all $p=1,2,\dots,m-1.$
 Since $\vol({\Bbb S}^m, \overline{g}_{p,L})$ is alomost $1$
 (because of small perturbation around one point), we find 
\begin{equation*} 
\begin{split}
  \vol(M,g_{\e}) &\le \dsum^{m-1}_{p=1} \vol({\Bbb S}^m, \overline{g}_{p,L})
     + \vol(M, \e^2 g) \le m.
\end{split}
\end{equation*}
 After normalization of Riemannian metric $g_{\e}$ on $M$, we can find 
 a Riemannian metric $\overline{g}_{\e}$ on $M$ such that  
 $\vol (M, \overline{g}_{\e}) \equiv 1$ and 
\begin{equation*} 
\begin{split}
 \overline{\lambda}^{(p)}_k (M, \overline{g}_{\e})
   &<  \eta \cdot m^{\frac{2}{m}}  \quad \text{ and } \quad 
 \lambda^{\prime \prime (p)}_k (M, \overline{g}_{\e}) < \eta \cdot m^{\frac{2}{m}}
\end{split}
\end{equation*}
 for all $p=1,2,\dots,m-1$.

 Thus, we have finished the proof of Theorem \ref{thm:unifomly-main}.
\end{proof}


\section{Appendix: Convergence of the eigenvalues of the rough Laplacian}

 We study here the convergence of the eigenvalues of the rough Laplacian acting 
 on $p$-forms, when one side of a connected sum of two closed Riemannian manifolds
 collapses to a point.
 Our setting is the same as in the beginning of Subsection $5.1.$

\begin{thm}\label{thm:rough-conv}
 For all $p$ with $0 \le p \le m$ and for all $k=1,2, \dots,$ we have
\begin{equation*}
\begin{split}
  \lim_{\e \rightarrow 0} \overline{\lambda}^{(p)}_k(M,g_{\e}) &= 
  \overline{\lambda}^{(p)}_{k} (M_1,g_1).
\end{split}
\end{equation*}
\end{thm}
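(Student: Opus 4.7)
Since Lemma~\ref{lem:upper-bound} already provides
\[
  \limsup_{\e \to 0} \overline{\lambda}^{(p)}_k(M, g_\e) \le \overline{\lambda}^{(p)}_k(M_1, g_1),
\]
the plan is to establish the matching lower bound
$\liminf_{\e \to 0} \overline{\lambda}^{(p)}_k(M, g_\e) \ge \overline{\lambda}^{(p)}_k(M_1, g_1)$
by a compactness argument on eigenforms, parallel to the Hodge case treated in \cite{p-conv[12]}, \cite{0-conv[02]}. For each small $\e > 0$, I would fix an $L^2(M, g_\e)$-orthonormal family $\{\psi_{\e,j}\}_{j=1}^k$ of eigenforms of $\overline{\Delta}_{g_\e}$ associated with the first $k$ eigenvalues. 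Lemma~\ref{lem:upper-bound} bounds the Dirichlet energies $\|\nabla \psi_{\e,j}\|^2_{L^2(M, g_\e)} = \overline{\lambda}^{(p)}_j(M, g_\e)$ uniformly in~$\e$, giving a uniform $H^1(M, g_\e)$-control.

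The first step would be to show that the $L^2$-mass of each $\psi_{\e,j}$ escapes the collapsing piece:
\[
  \|\psi_{\e,j}\|_{L^2(M_2(1), \e^2 g_2)} \longrightarrow 0 \quad\text{as } \e \to 0.
\]
Using $|\cdot|_{\e^2 g_2} = \e^{-p} |\cdot|_{g_2}$, $d\mu_{\e^2 g_2} = \e^m d\mu_{g_2}$, and the homothety-invariance of the Levi-Civita connection, the rough-Laplacian eigenvalues on $(M_2(1), \e^2 g_2)$ with Dirichlet boundary condition scale like $\e^{-2}$. Decomposing the restriction $\psi_{\e,j}|_{M_2(1)}$ into its harmonic extension of the boundary trace plus a Dirichlet piece, the Dirichlet piece has $L^2$-norm of order $\e \|\nabla \psi_{\e,j}\|_{L^2(M_2(1), \e^2 g_2)} = O(\e)$, while the boundary trace is controlled, via a trace inequality on a shrinking collar neighborhood of the gluing sphere inside $M_1$, by the $H^1(g_1)$-norm on that collar whose volume tends to zero.

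Second, on the fixed side, for any compact $K \Subset M_1 \setminus \{x_1\}$ and all sufficiently small~$\e$, the metric $g_\e$ coincides with $g_1$ on $K$, so the Rellich-Kondrachov theorem permits extraction of an $L^2$-strongly and $H^1$-weakly convergent subsequence $\psi_{\e,j}|_K \to \psi_j$. A diagonal exhaustion of $M_1 \setminus \{x_1\}$ yields a limit $\psi_j \in H^1_{\mathrm{loc}}(\Lambda^p(M_1 \setminus \{x_1\}), g_1)$ with $\|\nabla \psi_j\|^2_{L^2} \le \liminf_\e \overline{\lambda}^{(p)}_j(M, g_\e)$, and the mass-concentration step together with $L^2$-orthonormality on $(M, g_\e)$ transfers to $\|\psi_j\|^2_{L^2(M_1, g_1)} = 1$ and mutual orthogonality. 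Passing to the limit in the weak eigen-equation gives $\overline{\Delta}_{g_1} \psi_j = \mu_j \psi_j$ weakly on $M_1 \setminus \{x_1\}$, where $\mu_j := \lim_\e \overline{\lambda}^{(p)}_j(M, g_\e)$ along the subsequence. Since a point has zero $H^1$-capacity in dimension $m \ge 2$, component-wise in a local trivialization the limit extends to $\psi_j \in H^1(\Lambda^p M_1, g_1)$, and elliptic regularity then promotes it to a smooth eigenform. Applying the min-max principle to the orthonormal family $\{\psi_1, \ldots, \psi_k\}$ yields
\[
  \overline{\lambda}^{(p)}_k(M_1, g_1) \le \max_{1 \le j \le k} \mu_j = \mu_k,
\]
which, combined with Lemma~\ref{lem:upper-bound}, gives the asserted convergence.

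The main obstacle is the mass-concentration step, since the scaling of the pointwise $p$-form norm under the homothety $g_2 \mapsto \e^2 g_2$ interacts with the $L^2$-norm in a degree-dependent way, and one must convert the uniform bound on the rough energy alone (without any curvature or capacity assumption on $(M_2, g_2)$) into $L^2$-smallness on the whole of $M_2(1)$; carefully tracking the trace at the gluing sphere and balancing the two scaling factors is the delicate point. A secondary, essentially routine, technicality is the removable-singularity argument at $x_1$, which must be applied to each component of the limit form in a chart near~$x_1$.
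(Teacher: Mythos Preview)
Your overall strategy is the paper's: having the upper bound from Lemma~\ref{lem:upper-bound}, you obtain the lower bound by a compactness argument on eigenforms, first showing that the $L^2$-mass escapes the collapsing piece and then extracting on the fixed side an $L^2$-orthonormal family of limiting eigenforms on $(M_1,g_1)$. The two routes differ in how each of these two steps is executed.

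For the mass-escape on $M_2(1)$, the paper does \emph{not} use a harmonic-extension/Dirichlet decomposition. It rescales the restriction to $\vphi^{2}_{\e}:=\e^{\frac{m}{2}-p}\,\psi_{\e}\!\upharpoonright_{M_2(1)}$, observes that $\|\nabla\vphi^{2}_{\e}\|_{L^2(M_2(1),g_2)}\le \e\,\overline{\lambda}^{(p)}_k(\e)^{1/2}\to 0$, and by Rellich on the fixed domain $(M_2(1),g_2)$ concludes that any $L^2$-limit $\vphi^{2}$ is \emph{parallel}. A boundary-value estimate on the $M_1$-side (the paper's Lemma~6.3, which is the quantitative form of the ``trace inequality on a shrinking collar'' you invoke) then forces $\vphi^{2}\!\upharpoonright_{\partial M_2(1)}=0$; a parallel form with vanishing boundary trace is identically zero, so $\vphi^{2}_{\e}\to 0$ in $L^2$. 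This is more robust than your proposal: bounding the $L^2$-norm of the rough-harmonic extension by the $L^2$-norm of its boundary trace is \emph{not} a standard estimate for $p$-forms (there is no maximum principle for $\overline{\Delta}$), so your route would actually need $H^{1/2}$-control of the trace with a constant tracked through the shrinking collar, which you have not established. The paper's parallel-limit argument sidesteps this entirely; interestingly, it notes that the harmonic-extension technique was the one used in \cite{0-conv[02]} for functions, but deliberately replaces it by cut-offs here.

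On the fixed side $M_1$, the paper again avoids your diagonal-exhaustion/removable-singularity argument: it uses the explicit logarithmic cut-off $\chi_{\e}$ of \eqref{eq:cut-off-func} to transplant $\vphi^{1}_{\e}$ onto all of $M_1$, shows directly (via the same boundary estimate) that $\chi_{\e}\vphi^{1}_{\e}$ is bounded in $H^1(\Lambda^p M_1,g_1)$, and passes to the limit there. Your capacity argument is equivalent and standard; the paper's version is simply more explicit and reuses the same cut-off already introduced for the upper bound.
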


To prove this Theorem \ref{thm:rough-conv}, we follow the schema of \cite{p-conv[12]}
which dealt with the Hodge Laplacian, but with less difficulties: when working with
the rough Laplacian the related quadratic form is exactly the one involved in the
$H^1$-norm and we are rather in the situation of \cite{0-conv[02]} which dealt with
functions. Nevertheless notice that we use here cut-off functions, while
 \cite{0-conv[02]} used a technique by means of the harmonic extension.

\vspace{0.3cm}
 Let $\vphi_{\e}$ be a normalized eigen $p$-form of the rough Laplacian 
 associated with the eigenvalue  
  $\overline{\lambda}^{(p)}_k({\e}) = \overline{\lambda}^{(p)}_k(M,g_{\e})$: 
\begin{equation*}
\begin{split}
  \overline{\Delta} \vphi_{\e} 
    &=  \overline{\lambda}^{(p)}_k(\e) \vphi_{\e} \ 
  \text{ and }  \  
   \| \vphi_{\e} \|_{L^2(M, g_{\e})} \equiv 1.
\end{split}
\end{equation*}
 By Lemma \ref{lem:upper-bound}, we already know that 
 the family $\{ \overline{\lambda}^{(p)}_k({\e}) \}_{\e>0}$ is bounded.  
 So, we set  
 $\overline{\lambda}^{(p)}_k = \displaystyle \liminf_{\e \rightarrow 0} 
   \overline{\lambda}^{(p)}_k (M,g_{\e}), $ 
 and decompose the eigen $p$-form $\vphi_{\e}$ on the connected sum into 
\begin{equation*}
\begin{split}
  \vphi_{j,\e} &=  \big( \vphi^{1}_\e, \, \e^{p- \frac{m}{2}} \vphi^{2}_\e \big)
   \hbox{ with }
  \vphi^{1}_{\e}  \in H^1 (\Lambda^p M_1(\e), g_1), \
  \vphi^{2}_{\e}  \in H^1 (\Lambda^p M_2(1),  g_2).
\end{split}
\end{equation*}
 Then, these satisfy
\begin{equation} \label{eq:gluing-cond}
\begin{split}
  & \| \vphi^{1}_\e \|^2_{L^2(M_1(\e), g_1)} + 
      \| \vphi^{2}_\e \|^2_{L^2(M_2(1), g_2)} \equiv 1,  \\[0.2cm]
  & \vphi^{2}_\e = \e^{\frac{m}{2}-p} \vphi^{1}_{\e} 
        \hbox{ on the boundary.}
\end{split}
\end{equation}
 Furthermore, since $\vphi_{\e}$ is a normalized eigenform, we have 
\begin{equation} \label{eq:ev-quadratic}
\begin{split}
 \overline{\lambda}^{(p)}_{k}(M,g_{\e}) 
   &= \dint_{M_1(\e)} | \nabla \vphi^{1}_{\e} |^2 d \mu_{g_1} + 
      \dfrac{1}{\e^2} \dint_{M_2(1)} | \nabla \vphi^{2}_\e|^2 d \mu_{g_2} \\
   &= \| \nabla \vphi^{1}_{\e} \|^2_{L^2 (M_1(\e),g_1)} + 
      \dfrac{1}{\e^2} \| \nabla \vphi^{2}_{\e} \|^2_{L^2(M_2(1),g_2)}.
\end{split}
\end{equation}
 From \eqref{eq:gluing-cond} and \eqref{eq:ev-quadratic}, it follows that 
\begin{equation*} 
\begin{split}
 \| \vphi^{2}_{\e} \|^2_{H^1(M_2(1),g_2)}
   &=  \| \vphi^{2}_{\e} \|^2_{L^2(M_2(1),g_2)}
     + \| \nabla \vphi^{2}_{\e} \|^2_{L^2(M_2(1),g_2)} \\
   &\le  1 + {\e}^2 \overline{\lambda}^{(p)}_{k}(M,g_{\e}).
\end{split}
\end{equation*}
 By Lemma \ref{lem:upper-bound}, we see that 
 the family $\{ \vphi^{2}_{\e} \}_{\e >0}$ is bounded in 
 $H^1 (\Lambda^p M_2(1), g_2)$.
 Since $M_2(1)$ is compact, there exists a subsequence 
 $\{ \vphi^{2}_{\e_i} \}^{\infty}_{i=1}$ which converges weakly 
 to $\vphi^{2}$ in $H^1 (\Lambda^p M_2(1), g_2)$ and 
 strongly in $L^2 (\Lambda^p M_2(1), g_2)$.

\begin{lem} \label{lem:phi^2-parallel}
 The sequence $\{ \vphi^{2}_{\e_i} \}$ converges strongly 
 to $\vphi^2$ in $H^1 (\Lambda^p M_2(1), g_2)$, and
 the limit $\vphi^{2}$ is parallel on $(M_2(1), g_2).$
\end{lem}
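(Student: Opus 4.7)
The plan is to extract everything from the key estimate displayed immediately before the lemma, namely
\[
\|\nabla \vphi^{2}_{\e}\|^2_{L^2(M_2(1),g_2)} \;\le\; \e^2\, \overline{\lambda}^{(p)}_k(M,g_\e).
\]
Since Lemma \ref{lem:upper-bound} already provides a uniform upper bound on $\overline{\lambda}^{(p)}_k(M,g_\e)$ as $\e\to 0$, the right-hand side is $O(\e^2)$. So my first step is to conclude that $\|\nabla \vphi^{2}_{\e}\|_{L^2(M_2(1),g_2)} \to 0$ as $\e\to 0$; in particular, along the chosen subsequence $\{\e_i\}$, $\nabla \vphi^{2}_{\e_i} \to 0$ strongly in $L^2$.

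Next, I would identify the weak $H^1$-limit $\vphi^{2}$ as a parallel $p$-form. By weak lower semi-continuity of the $L^2$-norm applied to the weakly convergent sequence $\nabla \vphi^{2}_{\e_i} \rightharpoonup \nabla \vphi^{2}$ in $L^2(\Lambda^p T^*M_2(1)\otimes T^*M_2(1),g_2)$, we get
\[
\|\nabla \vphi^{2}\|_{L^2(M_2(1),g_2)} \;\le\; \liminf_{i\to\infty} \|\nabla \vphi^{2}_{\e_i}\|_{L^2(M_2(1),g_2)} \;=\; 0,
\]
so $\nabla \vphi^{2}\equiv 0$ on $M_2(1)$, i.e.\ $\vphi^{2}$ is parallel. (No boundary condition is needed for this pointwise identity, since $M_2(1)$ is a compact manifold with boundary and the vanishing of $\nabla \vphi^{2}$ holds as an $L^2$ equality.)

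Finally, I would upgrade the weak $H^1$-convergence to strong. Since $M_2(1)$ is compact, the Rellich--Kondrachov embedding $H^1 \hookrightarrow L^2$ is compact, so the subsequence $\vphi^{2}_{\e_i}$ converges strongly to $\vphi^{2}$ in $L^2(\Lambda^p M_2(1),g_2)$ (as already noted just before the lemma). Combined with the first step, which gives strong $L^2$ convergence $\nabla \vphi^{2}_{\e_i}\to 0 = \nabla \vphi^{2}$, I obtain
\[
\|\vphi^{2}_{\e_i}-\vphi^{2}\|^2_{H^1(M_2(1),g_2)} = \|\vphi^{2}_{\e_i}-\vphi^{2}\|^2_{L^2} + \|\nabla \vphi^{2}_{\e_i}\|^2_{L^2} \longrightarrow 0,
\]
which is the desired strong $H^1$-convergence.

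There is no real obstacle here: the whole argument is forced by the fact that the Dirichlet-type energy on the $M_2$-side carries an extra factor $\e^{-2}$ in \eqref{eq:ev-quadratic}, so an $O(1)$-bound on the eigenvalue translates into an $O(\e^2)$-bound on $\|\nabla \vphi^{2}_{\e}\|^2_{L^2}$. The only minor point to keep in mind is that the trace/boundary values of $\vphi^{2}_{\e}$ are prescribed by the gluing condition \eqref{eq:gluing-cond}, but this plays no role in the weak-limit and strong-convergence arguments above; it will reappear in the subsequent step of the proof, where one needs to show that the limit $\vphi^{2}$ is compatible with $\vphi^{1}$ and that $\vphi^{2}$ is in fact zero whenever $M_2$ admits no non-zero parallel $p$-form.
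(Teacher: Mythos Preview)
Your proof is correct and follows essentially the same approach as the paper: both use the key estimate $\|\nabla \vphi^{2}_{\e}\|^2_{L^2}\le \e^2\,\overline{\lambda}^{(p)}_k(M,g_\e)\to 0$ together with weak lower semi-continuity to obtain $\nabla\vphi^{2}=0$, and then combine the already established strong $L^2$-convergence of $\vphi^{2}_{\e_i}$ with $\|\nabla\vphi^{2}_{\e_i}\|_{L^2}\to 0=\|\nabla\vphi^{2}\|_{L^2}$ to upgrade to strong $H^1$-convergence. Your presentation is in fact slightly more explicit in first noting that $\nabla\vphi^{2}_{\e_i}\to 0$ strongly in $L^2$, but the substance is identical.
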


\begin{proof}
 From the lower semi-continuity of the weak limit and 
 Lemma \ref{lem:upper-bound}, it follows that 
\begin{equation*} 
\begin{split}
 \| \nabla \vphi^{2} \|^2_{L^2(M_2(1),g_2)}
   &\le \liminf_{\e \to 0} \| \nabla \vphi^{2}_{\e} \|^2_{L^2(M_2(1),g_2)} \\
   &\le \liminf_{\e \to 0} {\e}^2 \overline{\lambda}^{(p)}_{k}(M,g_{\e})
    = 0,
\end{split}
\end{equation*}
 that is, $\vphi^{2}$ is parallel on $(M_2(1), g_2)$.
 Therefore, we have 
\begin{equation*} 
\begin{split}
  \| \vphi^{2}_{\e_i} - \vphi^{2} \|^2_{H^1(M_2(1), g_2)}
  &= \| \vphi^{2}_{\e_i} - \vphi^{2} \|^2_{L^2(M_2(1), g_2)} + 
     \| \nabla \vphi^{2}_{\e_i} \|^2_{L^2(M_2(1), g_2)}  \\
  &\longrightarrow  0  \quad  (i \to \infty).
\end{split}
\end{equation*}
\end{proof}

 The following boundary value estimate is crucial in our argument.

\begin{lem}\label{lem:boundary-value-estimate} 
 There exists a constant $C>0$ such that for any $r$ with $\e \le r \le r_1$,  
 $\vphi \in H^1(M_1(r), g_1)$ satisfies 
\begin{equation*}
\begin{split}
  \| \vphi \rest_{\partial M_1(r)} \, \|^2_{L^2 (\partial M_1(r), g_{1, \partial})}
    &\leq 
\begin{cases}
    \dfrac{C r}{m-2} \, \| \vphi \|^2_{H^1(M_1(r),g_1)} 
         &  \text{if } m \ge 3,  \\[0.3cm]
    C r \big| \log r \big| \, \| \vphi \|^2_{H^1(M_1(r),g_1)} 
         &  \text{if } m = 2.
\end{cases}
\end{split}
\end{equation*}
 Note that since $\vphi \in H^1(M_1(r),g_1),$ the boundary value 
 $\vphi \rest_{\partial M_1(r)}$ on $\partial M_1(r)$ is considered in the sense of
 the trace operator $H^1(\Lambda^p M_1(r),g_1) \longrightarrow
  L^2(\Lambda^p M_1(r),g_1)\rest_{\partial M_1(r)}.$
\end{lem}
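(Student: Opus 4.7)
The plan is to exploit the flatness of $g_{1}$ on $B(x_{1},r_{1})$ in order to reduce the estimate to a one-dimensional radial computation on scalar functions, and then to pay for the geometric factor $r^{m-1}$ coming from the boundary sphere by Cauchy--Schwarz against the natural polar weight. Concretely, the annulus $B(x_{1},r_{1})\setminus B(x_{1},r)$ is isometric to $[r,r_{1}]\times{\Bbb S}^{m-1}$ with the polar metric $d\rho^{2}+\rho^{2}g_{{\Bbb S}^{m-1}}$, and the Cartesian coframe on $B(x_{1},r_{1})$ gives a parallel orthonormal trivialization of $\Lambda^{p}T^{*}M_{1}$ over this ball. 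In that trivialization $\vphi$ has $\binom{m}{p}$ scalar components $u_{I}$ with $|\vphi|^{2}=\sum_{I}|u_{I}|^{2}$ and $|\nabla\vphi|^{2}=\sum_{I}|du_{I}|^{2}$ pointwise on the shell, so the lemma reduces, after summing over $I$ and using the trivial bound $\|u_{I}\|_{H^{1}(\mathrm{shell})}\le\|\vphi\|_{H^{1}(M_{1}(r),g_{1})}$, to proving the analogous inequality $\|u|_{\{\rho=r\}}\|_{L^{2}}^{2}\le \frac{Cr}{m-2}\|u\|_{H^{1}(\mathrm{shell})}^{2}$ for a single scalar function $u$.

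For smooth $u$, fix $\theta\in{\Bbb S}^{m-1}$ and $s\in[r_{1}/2,r_{1}]$ and apply the fundamental theorem of calculus along the radial ray:
\[
  u(r,\theta)=u(s,\theta)-\int_{r}^{s}\partial_{\rho}u(\rho,\theta)\,d\rho.
\]
Squaring with $(a-b)^{2}\le 2a^{2}+2b^{2}$ and then Cauchy--Schwarz on the integral with the weight $\rho^{m-1}$ gives
\[
  |u(r,\theta)|^{2}\le 2|u(s,\theta)|^{2}+2\,I_{m}(r)\int_{r}^{r_{1}}|\partial_{\rho}u(\rho,\theta)|^{2}\rho^{m-1}\,d\rho,
\]
where $I_{m}(r)=\int_{r}^{r_{1}}\rho^{-(m-1)}\,d\rho\le r^{-(m-2)}/(m-2)$ for $m\ge 3$ and $I_{2}(r)=\log(r_{1}/r)$. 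The two cases stated in the lemma correspond to the fact that $r^{m-1}I_{m}(r)$ is of order $r/(m-2)$ for $m\ge 3$ and of order $r|\log r|$ for $m=2$.

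To assemble the bound, I would average the pointwise inequality over $s\in[r_{1}/2,r_{1}]$, integrate over $\theta\in{\Bbb S}^{m-1}$, and multiply by $r^{m-1}$; the left-hand side becomes $\int_{\partial M_{1}(r)}|u|^{2}\,dS$. The gradient term then contributes $\frac{Cr}{m-2}\|du\|_{L^{2}}^{2}$ (respectively $Cr|\log r|\,\|du\|_{L^{2}}^{2}$ when $m=2$). For the averaged boundary term, insert the missing factor $s^{m-1}$ to recover the flat volume element and use $(r/s)^{m-1}\le(2r/r_{1})^{m-1}\le Cr$ to turn it into $Cr\|u\|_{L^{2}}^{2}$; summing both pieces and then summing over the components $I$ yields the stated inequality. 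The main subtlety, and the step where the argument is easiest to spoil, is precisely this averaging: one must average $s$ over a window at positive distance from the inner sphere, such as $[r_{1}/2,r_{1}]$. A thin local window like $[r,2r]$ would give only the local trace inequality, which carries a spurious $r^{-1}$ factor in front of $\|u\|_{L^{2}}^{2}$ (the natural local scaling of the trace operator) and would wipe out the linear-in-$r$ bound sought here; separating the window from the inner boundary is exactly what trades that $r^{-1}$ for the harmless factor $(r/r_{1})^{m-1}\le Cr$.
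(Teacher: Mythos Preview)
Your argument is correct and arrives at the same estimate, but the mechanics differ from the paper's proof in two places. First, to deal with the outer endpoint the paper multiplies $\vphi$ by a fixed cut-off $\chi$ supported in $B(x_{1},r_{1})$ with $\chi\equiv 1$ near $\{\rho=r\}$, so that $|\chi\vphi|$ vanishes at $\rho=r_{1}$ and the fundamental theorem of calculus gives $|\vphi(r,\theta)|=\int_{r}^{r_{1}}\partial_{s}|\chi\vphi|\,ds$ with no residual endpoint term; the $L^{2}$-part of the $H^{1}$-norm then enters through $\nabla(\chi\vphi)=\nabla\chi\otimes\vphi+\chi\nabla\vphi$. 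Your averaging over $s\in[r_{1}/2,r_{1}]$ accomplishes the same thing without introducing a cut-off, and your remark about why the window must sit away from the inner sphere is exactly the point. Second, the paper works directly with the scalar $|\vphi|_{g_{1}}$ and invokes the Kato inequality $|\nabla|\vphi||\le|\nabla\vphi|$ to pass from $\partial_{s}|\chi\vphi|$ to $|\nabla(\chi\vphi)|$, whereas you first trivialise $\Lambda^{p}$ by the Cartesian parallel coframe and reduce to scalar components, which lets you avoid Kato entirely. Both devices rely on the flatness of $g_{1}$ on $B(x_{1},r_{1})$; the cut-off/Kato route is a bit more portable to non-flat situations, while your averaging/trivialisation route is slightly more elementary here.
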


\begin{proof}
 We may assume that $\vphi$ is smooth.
 By using the polar coordinates $(r,\theta)$ on the geodesic ball $B(x_1,r_1),$
 we denote a $p$-form $\vphi = \a + dr \wedge \b$ and the metric $g_1 = dr^2 + r^2 h$, 
 where $h$ is the standard metric of ${\Bbb S}^{m-1}$.
 Then, the point-wise norm of $\vphi$ at $(r,\theta)$ is expressed as 
\begin{equation*} \label{eq:point-wise-norm}
\begin{split}
 |\vphi(r,\theta)|^2_{g_1}
    &= r^{-2p} |\a(r,\theta)|^2_h + r^{-2p +2} |\b(r,\theta)|^2_h.
\end{split}
\end{equation*}
 We take a cut-off function $\chi$ on the  ball $B(x_1,r_1)$
 satisfying $\chi(s) = 1$ for $s\leq r$ and $\chi(r_1) = 0$
 (We may take $r_1 <1$, if necessary).
 From the Kato inequality $|\nabla |\vphi| | \le |\nabla \vphi|$
 and the Schwarz inequality, it follows that 
\begin{equation*}
\begin{split}
 |\vphi (r,\theta)|_{g_1}
   &=  \dint^{r_1}_{r} \partial_s \big( |\chi \vphi (s,\theta)|_{g_1} \big) \, ds 
   \le  \dint^{r_1}_{r} |\nabla (\chi \vphi) (s,\theta)|_{g_1} \, ds  \\ 
   &\leq \sqrt{ \dint^{r_1}_r s^{1-m} \, ds} \cdot 
      \sqrt{ \dint^{r_1}_r |\nabla (\chi \vphi)(s,\theta)|^2_{g_1} s^{m-1} \, ds }.
\end{split}
\end{equation*}
 Therefore, we have 
\begin{equation*}
\begin{split}
 \| \vphi \rest_{\partial M_1(r)} \, \|^2_{L^2 (\partial M_1(r), g_{1, \partial})}
  &=  \dint_{\partial M_1(r)} |\vphi (r,\theta)|^2_{g_1} \, d \mu_{r^2 h}  
   =  r^{m-1} \dint_{{\Bbb S}^{m-1}} |\vphi (r,\theta)|^2_{g_1} \,  d \mu_{h} \\ 
  &\le r^{m-1} \, \dint^{r_1}_r s^{1-m} \, ds \cdot  \dint^{r_1}_r \dint_{{\Bbb S}^{m-1}}
         |\nabla (\chi \vphi)(s,\theta)|^2_{g_1} s^{m-1} \, ds \, d \mu_{h}  \\
  &= r^{m-1} \, \dint^{r_1}_r s^{1-m} \, ds \cdot \dint^{r_1}_r \dint_{{\Bbb S}^{m-1}}
       | \nabla \chi \otimes \vphi + \chi \nabla \vphi |^2_{g_1} 
      s^{m-1} \, ds \, d \mu_{h}  \\
  &\le C \, r^{m-1} \, \dint^{r_1}_r s^{1-m} \, ds \cdot
          \big\| \vphi \big\|^2_{H^1(M_1(r),g_1)}, 
\end{split}
\end{equation*}
 where $C$ is a positive constant depending only on $\chi$ and $\nabla \chi$.
 By combining this with  
\begin{equation*}
\begin{split}
  \dint^{r_1}_r s^{1-m} \, ds &\leq  
\begin{cases}
    \dfrac{r^{2-m}}{m-2}  &  \text{if } m \ge 3,  \\[0.3cm]
    \big| \log r \big|  &  \text{if } m = 2, 
\end{cases}  
\end{split}
\end{equation*}
 we obtain the boundary value estimate.
\end{proof}

\begin{lem}\label{lem:phi^2=0}
 The limit $\vphi^{2} = 0$ a.e.\ $(M_2(1), g_2).$
\end{lem}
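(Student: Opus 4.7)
My plan is to exploit two properties of $\vphi^2$ already in hand: from Lemma~\ref{lem:phi^2-parallel}, $\vphi^2$ is parallel on $(M_2(1), g_2)$ (so it is smooth and has constant pointwise norm on each connected component), and $\vphi^2_{\e_i} \to \vphi^2$ strongly in $H^1$. Since $M_2(1) = M_2 \setminus B(x_2,1)$ is connected (removing a geodesic ball from a connected manifold of dimension $\ge 2$ does not disconnect it), it will be enough to establish $\vphi^2 |_{\partial M_2(1)} = 0$: then smoothness of $\vphi^2$ forces this trace to vanish pointwise everywhere on the boundary, and the constant-norm property of a parallel form propagates the zero to all of $M_2(1)$.

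The continuity of the trace operator $H^1 \to L^2$ reduces the vanishing of the boundary trace of $\vphi^2$ to proving
\begin{equation*}
  \| \vphi^2_\e |_{\partial M_2(1)} \|^2_{L^2(\partial M_2(1),\, g_{2,\partial})}
  \longrightarrow 0
  \quad \text{as } \e \to 0.
\end{equation*}
The route is the gluing condition $\vphi^2_\e = \e^{m/2 - p} \vphi^1_\e$ on the common boundary, combined with Lemma~\ref{lem:boundary-value-estimate} applied to $\vphi^1_\e$ with radius $r = \e$. From~\eqref{eq:gluing-cond} I already have $\|\vphi^1_\e\|_{L^2(M_1(\e), g_1)} \le 1$, and from~\eqref{eq:ev-quadratic} together with Lemma~\ref{lem:upper-bound} the Dirichlet energy $\|\nabla \vphi^1_\e\|^2_{L^2(M_1(\e), g_1)}$ is bounded uniformly in $\e$. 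Hence Lemma~\ref{lem:boundary-value-estimate} gives $\|\vphi^1_\e |_{\partial M_1(\e)}\|^2_{L^2(\partial M_1(\e),\, g_{1,\partial})} \le C\,\e$ for $m \ge 3$, and the analogous bound with an extra $|\log \e|$ factor for $m = 2$.

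The remaining piece is scaling bookkeeping. Under the gluing, the identified boundary carries two metrics: $g_{1,\partial}$ (sphere of radius $\e$) and $g_{2,\partial}$ (sphere of radius $1$), related by $g_{1,\partial} = \e^2 g_{2,\partial}$. Tracking how the pointwise norm of a $p$-form and the $(m-1)$-dimensional volume element rescale under $g_2 \mapsto \e^2 g_2$, together with the gluing factor $\e^{m/2-p}$, yields
\begin{equation*}
  \|\vphi^2_\e|_{\partial M_2(1)}\|^2_{L^2(\partial M_2(1),\, g_{2,\partial})}
   = \e \, \|\vphi^1_\e|_{\partial M_1(\e)}\|^2_{L^2(\partial M_1(\e),\, g_{1,\partial})} \le C\,\e^2,
\end{equation*}
which tends to zero as $\e \to 0$ and closes the argument.

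The Sobolev estimate is already supplied by Lemma~\ref{lem:boundary-value-estimate}; the only delicate aspect is the conformal bookkeeping, namely verifying that the factor $\e^{m/2 - p}$ appearing in the gluing condition and the $\e^2$-dilation of the $M_2$-metric combine exactly so that $\|\vphi^1_\e|_\partial\|^2_{L^2(g_{1,\partial})} = \e^{-1}\|\vphi^2_\e|_\partial\|^2_{L^2(g_{2,\partial})}$. This is what turns the $O(\e)$ estimate on the $M_1$-side into an $O(\e^2)$ estimate on the $M_2$-side, and it is where I expect the main (if modest) obstacle to lie.
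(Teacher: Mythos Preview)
Your proposal is correct and follows essentially the same route as the paper's proof: reduce to vanishing of the boundary trace via the parallel property of $\vphi^2$, pass to the limit using continuity of the trace operator and the strong $H^1$-convergence from Lemma~\ref{lem:phi^2-parallel}, carry out the conformal bookkeeping to obtain $\|\vphi^2_\e|_{\partial M_2(1)}\|^2_{L^2(g_{2,\partial})} = \e\,\|\vphi^1_\e|_{\partial M_1(\e)}\|^2_{L^2(g_{1,\partial})}$, and then apply Lemma~\ref{lem:boundary-value-estimate} with $r=\e$ together with the uniform $H^1$-bound on $\vphi^1_\e$. The only cosmetic difference is that you make the connectedness of $M_2(1)$ explicit, and your final displayed bound $\le C\e^2$ should carry the extra $|\log\e|$ factor when $m=2$ (which you noted earlier and which still tends to zero).
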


\begin{proof}
 Since $\vphi^{2}$ is parallel, it is sufficient to prove that the boundary 
 value of $\vphi^{2}$ to $\partial M_2(1)$ in the sense of the trace is zero.
 Since the trace operator is continuous and 
 $\vphi^{2}_{\e_i} \longrightarrow \vphi^{2}$ strongly in  $H^1 (M_2(1), g_2)$, 
 we have
\begin{equation*}
\begin{split}
 \| \vphi^{2}_{\e_i} \rest_{\partial M_2(1)} - \vphi^{2} \rest_{\partial M_2(1)}
    \|^2_{L^2 (\partial M_2(1), g_{2, \partial})} 
  &\le C \| \vphi^{2}_{\e_i} - \vphi^{2} \|^2_{H^1(M_2(1), g_{2})}
   \rightarrow 0  \quad (i \to \infty).
\end{split}
\end{equation*}
 Thus, we see the norm convergence:
\begin{equation*}
\begin{split}
 \| \vphi^{2} \rest_{\partial M_2(1)} \|_{L^2 (\partial M_2(1), g_{2, \partial})} 
  = \dlim_{i \to \infty} \| \vphi^{2}_{\e_i} \rest_{\partial M_2(1)}
    \|_{L^2 (\partial M_2(1), g_{2, \partial})}.
\end{split}
\end{equation*}

 From the gluing condition \eqref{eq:gluing-cond} at the boundary,
 we have 
\begin{equation*} 
\begin{split}
 \| \vphi^2_{\e_i} \rest_{\partial M_2(1)} \|^2_{L^2 (\partial M_2(1), g_{2, \partial})}
  &= \dint_{\partial M_2(1)} | \vphi^2_{\e_i} \rest_{\partial M_2(1)} |^2_{g_2}
        \, d \mu_{h} \\
  &= \dint_{\partial M_1(1)} | {\e_i}^{\frac{m}{2} - p}  \vphi^{1}_{\e_i}
        \rest_{\partial M_1(\e_i)} |^2_{g_2} \, d \mu_{h} 
      \quad  (\text{by } \eqref{eq:gluing-cond}) \\
  &= {\e_i} \dint_{\partial M_1(\e_i)} {\e_i}^{- 2p}
      | \vphi^{1}_{\e_i} \rest_{\partial M_1(\e_i)} |^2_{g_2} \, d \mu_{{\e_i}^2 h} \\
  &= {\e_i} \dint_{\partial M_1(\e_i)} | \vphi^{1}_{\e_i} \rest_{\partial M_1(\e_i)} 
       |^2_{g_1} \, d \mu_{{\e_i}^2 h} \\ 
  &= {\e_i} \, \| \vphi^{1}_{\e_i} \rest_{\partial M_1(\e_i)} \,
       \|^2_{L^2 (\partial M_1(\e_i), g_{1, \partial})}.
\end{split}
\end{equation*}
 By Lemma \ref{lem:boundary-value-estimate}, 
 we find that the boundary value of $\vphi^{2}$ is zero.
 Therefore, the limit $\vphi^{2}$ must be zero.
\end{proof}

 We take again the cut-off function $\chi_{\e}$ on $M_1$
 as in \eqref{eq:cut-off-func}, and set 
\begin{equation} \label{eq:psi}
\begin{split}
  \psi_{\e} &:= \chi_{\e} \vphi_{\e}^{1} \ \text{ on } M_1.
\end{split}
\end{equation}

\begin{lem}\label{lem:H^1-bdd}
 The family $\{ \psi_{\e} \}_{\e>0}$ is bounded in $H^1(\Lambda^p M_1, g_1)$.
\end{lem}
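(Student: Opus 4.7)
The plan is to split the $H^1$-norm as $\|\psi_\e\|^2_{H^1(M_1,g_1)} = \|\psi_\e\|^2_{L^2} + \|\nabla \psi_\e\|^2_{L^2}$ and bound each piece separately, uniformly in $\e$. The $L^2$-piece is immediate: since $0 \le \chi_\e \le 1$ and $\chi_\e$ is supported in $M_1(\e)$, the first line of \eqref{eq:gluing-cond} gives $\|\psi_\e\|^2_{L^2(M_1,g_1)} \le \|\vphi_\e^1\|^2_{L^2(M_1(\e),g_1)} \le 1$.

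For the gradient term, I would expand
\begin{equation*}
\nabla \psi_\e = d\chi_\e \otimes \vphi_\e^1 + \chi_\e \, \nabla \vphi_\e^1,
\end{equation*}
and use $(a+b)^2 \le 2a^2 + 2b^2$ to split it into two contributions. The second contribution satisfies
\begin{equation*}
\|\chi_\e\, \nabla \vphi_\e^1\|^2_{L^2(M_1,g_1)} \le \|\nabla \vphi_\e^1\|^2_{L^2(M_1(\e),g_1)} \le \overline{\lambda}^{(p)}_k(\e)
\end{equation*}
by \eqref{eq:ev-quadratic}, and this is uniformly bounded thanks to Lemma \ref{lem:upper-bound}.

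The core difficulty lies in the first contribution, which is precisely where the logarithmic shape of $\chi_\e$ is designed to pay off. By \eqref{eq:cut-off-func}, $d\chi_\e$ is supported in the annular shell $\{\e \le r \le \sqrt{\e}\}$ and satisfies $|d\chi_\e|_{g_1} \le \tfrac{2}{|\log \e| \cdot r}$. Passing to polar coordinates around $x_1$ and integrating in $r$ first, one obtains
\begin{equation*}
\|d\chi_\e \otimes \vphi_\e^1\|^2_{L^2(M_1,g_1)} \le \frac{4}{(\log \e)^2} \int_\e^{\sqrt{\e}} \frac{1}{r^2} \left( \int_{\partial M_1(r)} |\vphi_\e^1|^2_{g_1} \, d\mu_{g_{1,\partial}} \right) dr.
\end{equation*}
Applying Lemma \ref{lem:boundary-value-estimate} at each radius $r$, together with $\|\vphi_\e^1\|^2_{H^1(M_1(r),g_1)} \le \|\vphi_\e^1\|^2_{H^1(M_1(\e),g_1)} \le 1 + \overline{\lambda}^{(p)}_k(\e)$, reduces the estimate to $\int_\e^{\sqrt{\e}} \frac{dr}{r} = \tfrac{1}{2}|\log \e|$ when $m \ge 3$, which produces a factor of order $|\log \e|^{-1}$ and in fact shows the first contribution tends to $0$. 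In dimension $m=2$ the extra $|\log r|$ factor only costs a bounded constant, since $\int_\e^{\sqrt{\e}} \frac{|\log r|}{r}\, dr = O((\log \e)^2)$ is controlled after division by $(\log \e)^2$.

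The main obstacle is therefore the first term, and it is resolved by the very choice of the logarithmic cut-off in \eqref{eq:cut-off-func}: a linear cut-off would fail this estimate. Combining the three bounds gives a constant $C>0$, independent of $\e$, with $\|\psi_\e\|^2_{H^1(M_1,g_1)} \le C$, as required.
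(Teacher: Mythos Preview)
Your proof is correct and follows essentially the same approach as the paper: split the $H^1$-norm, bound the $L^2$-part trivially by \eqref{eq:gluing-cond}, bound $\|\chi_\e\nabla\vphi^1_\e\|^2$ by $\overline{\lambda}^{(p)}_k(\e)$ via \eqref{eq:ev-quadratic} and Lemma~\ref{lem:upper-bound}, and control $\|d\chi_\e\otimes\vphi^1_\e\|^2$ by writing it as a radial integral of boundary $L^2$-norms and invoking Lemma~\ref{lem:boundary-value-estimate}. Your explicit remark that $\|\vphi^1_\e\|^2_{H^1(M_1(r),g_1)}\le\|\vphi^1_\e\|^2_{H^1(M_1(\e),g_1)}\le 1+\overline{\lambda}^{(p)}_k(\e)$ (since $M_1(r)\subset M_1(\e)$ for $r\ge\e$) is a point the paper leaves implicit, and your treatment of the $m=2$ case via $\int_\e^{\sqrt{\e}}\frac{|\log r|}{r}\,dr=\tfrac{3}{8}(\log\e)^2$ is equivalent to the paper's cruder bound $|\log r|\le|\log\e|$.
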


\begin{proof}
  It is easy to see that the $L^2$-norm of $\psi_{\e}$ is bounded by $1$. 
  Now, we have
\begin{equation*}
\begin{split}
 \dint_{M_1} & | \nabla (\chi_\e \vphi^1_\e) |^2_{g_1} d \mu_{g_1}
   = \dint_{M_1} | \nabla \chi_{\e} \otimes \vphi^1_{\e} +
         \chi_\e \nabla \vphi^1_{\e} |^2_{g_1} \, d \mu_{g_1} \\
   &\leq 2  \Big( \dfrac{2}{\log\e} \Big)^2 \,  \dint_\e^{\sqrt{\e}}
        \dint_{\Bbb S^{m-1}} | \vphi^1_\e (r,\theta) |^2_{g_1} \, r^{m-3} \, 
        dr \,d \mu_{h} 
      + 2 \dint_{M_1(\e)} |\nabla \vphi^1_{\e}|^2_{g_1} \, d \mu_{g_1} \\
   &=  \dfrac{8}{| \log \e |^2} \,  \dint_\e^{\sqrt{\e}}
        \| \vphi^1_{\e} \|^2_{L^2( \partial M_1(r), g_{1, \partial})} \,
        r^{-2} \, dr  
      + 2 \, \| \nabla \vphi^1_\e \|^2_{L^2(M_1(\e), g_1)}.
\end{split}
\end{equation*}
 For the first term, by applying Lemma \ref{lem:boundary-value-estimate},
 we have, if $m \ge 3$, 
\begin{equation*}
\begin{split}
 \dfrac{8}{| \log \e |^2} \, \dint^{\sqrt{\e}}_{\e}
     \| \vphi^{1}_{\e} \|^2_{L^2( \partial M_1(r), g_{1, \partial})} \, r^{-2} \, dr
  &\le \dfrac{C}{m-2} \cdot \dfrac{8}{| \log \e |^2} \,
     \| \vphi^{1}_{\e} \|^2_{H^1(M_1(r),g_1)} \, \dint^{\sqrt{\e}}_{\e} r^{-1} \, dr \\
  &\le  \dfrac{C}{m-2} \cdot \dfrac{4}{| \log \e |} \,
     \| \vphi^{1}_{\e} \|^2_{H^1(M_1(r),g_1)},
\end{split}
\end{equation*}
 and if $m=2$,  
\begin{equation*}
\begin{split}
 \dfrac{8}{| \log \e |^2} \, \dint^{\sqrt{\e}}_{\e}
     \| \vphi^1_{\e} \|^2_{L^2( \partial M_1(r), g_{1, \partial})} \, r^{-2} \, dr
  &\le  \dfrac{8C}{| \log \e |^2} \| \vphi^{1}_{\e} \|^2_{H^1(M_1(r),g_1)} \, 
        \dint^{\sqrt{\e}}_{\e} | \log r | \, r^{-1} \, dr \\
  &\le   \dfrac{8C}{| \log \e |} \| \vphi^{1}_{\e} \|^2_{H^1(M_1(r),g_1)} \, 
        \dint^{\sqrt{\e}}_{\e} r^{-1} dr  \\
  &=  4C \, \| \vphi^{1}_{\e} \|^2_{H^1(M_1(r),g_1)}.
\end{split}
\end{equation*}
 For the second term, we have 
\begin{equation*}
\begin{split}
 \| \nabla \vphi_{\e} \|^2_{L^2(M_1(\e), g_1)}
  &\le  \| \nabla \vphi_{\e} \|^2_{L^2(M, g_{\e})}  
   =  \overline{\lambda}^{(p)}_k(M, g_{\e}), 
\end{split}
\end{equation*}
 which is uniformly bounded by Lemma \ref{lem:upper-bound}. 

 Therefore, we find that $\{ \psi_{\e} \}_{\e>0}$ is bounded
 in $H^1(\Lambda^p M_1, g_1)$.
\end{proof}

 The following lemma is obtained from the same method as in
 \cite[Corollary $15$]{p-conv[12]}, p.1732.

\begin{lem}\label{cor:psi-conv}
 We can extract a subsequence $\{ \psi_{\e_i} \}$ which converges 
 weakly to $\psi$ in $H^1(\Lambda^p M_1, g_1)$ and
 strongly in $L^2(\Lambda^p M_1, g_1)$ such that
\begin{equation*}
 \begin{split}
  \overline{\Delta}_{M_1} \psi = \overline{\lambda}^{(p)}_k \psi \ \text{ and } 
     \  \| \psi \|_{L^2(M_1, g_1)} =1.  
\end{split}
\end{equation*}
\end{lem}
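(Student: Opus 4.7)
\bigskip
\noindent\textbf{Proof plan.}
The strategy is to combine compact Sobolev embedding on $M_1$ with the eigen-equation for $\vphi_\e$, in the spirit of the function case in \cite{0-conv[02]} and the Hodge case in \cite{p-conv[12]}. First, by Lemma~\ref{lem:H^1-bdd} the family $\{\psi_\e\}$ is bounded in $H^1(\Lambda^p M_1, g_1)$; since $M_1$ is compact, the Rellich--Kondrachov embedding $H^1 \hookrightarrow L^2$ yields a subsequence $\{\psi_{\e_i}\}$ that converges weakly in $H^1(\Lambda^p M_1, g_1)$ to some $\psi$ and strongly in $L^2(\Lambda^p M_1, g_1)$. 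Extracting a further subsequence if necessary, I may assume simultaneously that $\overline{\lambda}^{(p)}_k(M, g_{\e_i}) \to \overline{\lambda}^{(p)}_k$, the liminf fixed before the statement.

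\smallskip
Next I would verify $\|\psi\|_{L^2(M_1, g_1)} = 1$. The normalization $\|\vphi_\e\|_{L^2(M, g_\e)} = 1$ together with the behaviour of the $L^2$-norm on $p$-forms under a constant conformal factor is precisely \eqref{eq:gluing-cond}; Lemmas~\ref{lem:phi^2-parallel} and \ref{lem:phi^2=0} give $\|\vphi_{\e_i}^2\|_{L^2(M_2(1), g_2)} \to 0$, hence $\|\vphi_{\e_i}^1\|_{L^2(M_1(\e_i), g_1)}^2 \to 1$. The difference $\|\psi_\e\|_{L^2(M_1)}^2 - \|\vphi_\e^1\|_{L^2(M_1(\e))}^2$ is supported on the annulus $B(x_1,\sqrt\e)\setminus B(x_1,\e)$ and is bounded by $\|\vphi_\e^1\|_{L^2(\mathrm{annulus})}^2$; by polar coordinates and Lemma~\ref{lem:boundary-value-estimate}, this quantity is
$$\|\vphi_\e^1\|_{L^2(\mathrm{annulus})}^2 \;\leq\; \int_\e^{\sqrt\e} \|\vphi_\e^1\rest_{\partial M_1(r)}\|_{L^2(\partial M_1(r), g_{1,\partial})}^2 \,dr \;=\; O(\e) \text{ or } O(\e|\log\e|),$$
since $\|\vphi_\e^1\|_{H^1(M_1(\e),g_1)}$ is uniformly bounded by \eqref{eq:ev-quadratic} and Lemma~\ref{lem:upper-bound}. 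Strong $L^2$-convergence $\psi_{\e_i} \to \psi$ then forces $\|\psi\|_{L^2(M_1)} = 1$.

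\smallskip
For the eigen-equation, I would test the identity $\overline{\Delta}_{g_\e}\vphi_\e = \overline{\lambda}_k^{(p)}(\e) \vphi_\e$ against $\chi_\e \eta$, where $\eta \in C^\infty(\Lambda^p M_1)$ is arbitrary and $\chi_\e \eta$ is extended by zero to $M_2(1)$. The product rule on $M_1$ then yields
$$\int_{M_1} \langle \nabla \psi_\e, \nabla \eta\rangle\, d\mu_{g_1} \;=\; \overline{\lambda}_k^{(p)}(\e) \int_{M_1} \langle \psi_\e, \eta\rangle\, d\mu_{g_1} + R(\e),$$
where $R(\e)$ collects two cross-terms, each dominated by $\|\nabla\chi_\e\|_{L^2(M_1, g_1)} \cdot \|\eta\|_{C^1} \cdot \bigl(\|\vphi_\e^1\|_{L^2} + \|\nabla \vphi_\e^1\|_{L^2(M_1(\e))}\bigr)$. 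A direct polar-coordinate calculation from the logarithmic profile \eqref{eq:cut-off-func} gives $\|\nabla\chi_\e\|_{L^2(M_1, g_1)} = O(|\log\e|^{-1/2})$ in dimension $m=2$ and even faster decay for $m\geq 3$, while the factor in parentheses is uniformly bounded; hence $R(\e)\to 0$. Passing to the limit along $\e_i$, the left side converges by weak $H^1$-convergence, and the bulk term on the right by strong $L^2$-convergence together with $\overline{\lambda}_k^{(p)}(\e_i) \to \overline{\lambda}_k^{(p)}$; this establishes the weak form of $\overline{\Delta}_{M_1}\psi = \overline{\lambda}_k^{(p)}\psi$, and elliptic regularity upgrades it to a smooth eigen-equation.

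\smallskip
The main obstacle is showing that the cross-terms $R(\e)$ really vanish and that the $L^2$-mass does not leak into the annulus. Both rely on the interplay between the logarithmic cut-off \eqref{eq:cut-off-func}, which makes $\|\nabla\chi_\e\|_{L^2}$ small despite $\chi_\e - 1$ having small support, and the sharp trace inequality of Lemma~\ref{lem:boundary-value-estimate}, which controls $\vphi_\e^1$ on small spheres in terms of its $H^1$-norm. Once these are in hand, the remaining steps are the standard compactness-and-passage-to-the-limit argument.
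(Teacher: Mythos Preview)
Your argument is correct, but the route to the eigen-equation differs from the paper's. You test the equation $\overline{\Delta}_{g_\e}\vphi_\e = \overline{\lambda}^{(p)}_k(\e)\vphi_\e$ against the $\e$-dependent form $\chi_\e\eta$ and then estimate the two cross-terms coming from $\nabla\chi_\e$; this works because $\|\nabla\chi_\e\|_{L^2(M_1,g_1)}\to 0$ in every dimension $m\ge 2$, as you note. The paper instead tests against a \emph{fixed} $\omega\in\Omega^p_0(M_1\setminus\{x_1\})$; for $\e$ small enough the support of $\omega$ lies in the region where $\chi_\e\equiv 1$, so $\psi_{\e_i}=\vphi_{\e_i}$ there and no cross-terms appear at all. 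One then concludes by invoking the density of $\Omega^p_0(M_1\setminus\{x_1\})$ in $H^1(\Lambda^p M_1,g_1)$, which holds because a point has zero $H^1$-capacity in dimension $m\ge 2$. The paper's approach is slightly slicker (no error terms to estimate), while yours is more self-contained in that it does not invoke the capacity/density fact. Your treatment of the normalization $\|\psi\|_{L^2}=1$ is also more explicit than the paper's: the annulus estimate you give is exactly the computation \eqref{eq:chi-1=0} that the paper carries out only later, in the proof of Lemma~\ref{lem:limit-onf}.
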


\begin{proof}
 From Lemma \ref{lem:H^1-bdd}, a family $\{ \psi_{\e} \}$  is uniformly bounded
 in $H^1( \Lambda^p M_1, g_1)$.
 By the weak compactness for a Hilbert space and the Rellich-Kondrachov theorem, 
 there exist a subsequence $\{ \psi_{\e_i} \}_{i}$ 
 and the limit $\psi \in H^1( \Lambda^p M_1, g_1)$ such that  
  $\psi_{\e_i}  \rightarrow  \psi$   
 weakly in $H^1(\Lambda^p M_1, g_1)$ and strongly in $L^2(\Lambda^p M_1, g_1)$
 as $i \to \infty$.

 For any smooth $p$-form $\omega \in \Omega^p_0 (M_1 \setminus \{ x_1 \} ),$ 
 there exists $\e_0 > 0$ such that the support of $\omega$ is in 
 $M_1 \setminus B(x_1,2 \sqrt{\e_0} )$.
 So on this support we have $\psi_{\e_i}=\vphi^1_{\e_i}=\vphi_{\e_i}$ 
 as far as $\e_i<\e_0$. We label with $(\star)$ when we use this fact. 
 By Lemma \ref{lem:phi^2=0}, we have 
\begin{equation*}
\begin{split}
 (\psi, \overline{\Delta}_{g_1} \omega)_{L^2(M_1, g_1)} 
   &= \lim_{i \to \infty} ( \psi_{\e_i}, \overline{\Delta}_{g_1} \omega 
         )_{ L^2(M_1, g_1) }    \\
   &\underset{(\star)}{=} \lim_{i \to \infty} 
         ( \vphi_{\e_i}, \overline{\Delta}_{g_{\e_i}} \omega)_{L^2(M_1({\e_i}),g_1)} \\
   &= \lim_{i \to \infty} 
         ( \vphi_{\e_i}, \overline{\Delta}_{g_{\e_i}} \omega)_{L^2(M, g_{\e_i})} 
      \quad (\text{by Lemma $\ref{lem:phi^2=0}$}) \\
   &= \lim_{i \to \infty} \overline{\lambda}^{(p)}_k (M, g_{\e_i}) (\vphi_{\e_i}, 
             \omega )_{L^2 (M, g_{\e_i}) } 
    = \overline{\lambda}^{(p)}_k \, \lim_{i \to \infty} (\vphi^1_{\e_i}, \omega 
          )_{L^2 (M_1(\e_i), g_1)} \\
   &\underset{(\star)}{=}  \overline{\lambda}^{(p)}_k \, \lim_{i \to \infty}
       ( \psi_{\e_i}, \omega)_{L^2(M_1,g_1)}
    = \overline{\lambda}^{(p)}_k \, (\psi, \omega)_{L^2(M_1,g_1)}.
\end{split}
\end{equation*}


 Since $m \ge 2$, 
 $\Omega^{p}_0(M_1 \setminus \{ x_1 \})$ is dense in $H^1 (\Lambda^p M_1, g_1),$ 
 and we conclude that 
\begin{equation*}
\begin{split}
  \overline{\Delta}_{g_1} \psi &= \overline{\lambda}^{(p)}_k \psi
     \quad  \text{ weakly.}
\end{split}
\end{equation*}
  Furthermore, by the regularity theorem of weak solutions to elliptic equations,
 the limit $\psi$ in fact is a smooth $p$-form on $M_1$.

 Next, from the normalization $\| \vphi_{\e_i} \|_{L^2(M, g_{\e_i})} \equiv 1$ 
 and Lemma \ref{lem:phi^2=0}, we have 
  $\| \psi \|_{L^2(M_1,g_1)} =1.$
 Hence, the limit $\psi$ is a non-zero smooth eigenform on $(M_1, g_1)$ with 
 the eigenvalue $\overline{\lambda}^{(p)}_k.$
\end{proof}

 To complete the proof of Theorem \ref{thm:rough-conv}, 
 we have only to prove the following lemma.

\begin{lem}\label{lem:limit-onf}
 Let $\{ \vphi_{1, \e_i}, \dots, \vphi_{k,\e_i} \}$ be $L^2(M,g_{\e_i})$-orthonormal
 eigenforms  on $(M,g_{\e})$ associated with the eigenvalues 
 $\overline{\lambda}^{(p)}_1(M, g_{\e_i}), \dots, \overline{\lambda}^{(p)}_k(M,g_{\e_i}),$
 and let $\{ \psi_{1}, \dots, \psi_{k} \}$ be the limits obtained from
  $\{ \vphi_{1, \e_i}, \dots, \vphi_{k,\e_i} \}.$
 Then,  $\{ \psi_{1}, \dots, \psi_{k} \}$ are also $L^2(M_1,g_1)$-orthonormal
 eigenforms on $(M_1,g_1)$ associated with the eigenvalues 
 $\overline{\lambda}^{(p)}_1(M_1, g_1), \dots$, $ \overline{\lambda}^{(p)}_k(M_1,g_1).$
\end{lem}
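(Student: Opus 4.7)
The plan is to promote the single-eigenform convergence of Corollary $\ref{cor:psi-conv}$ to a simultaneous convergence for the first $k$ eigenforms, and then identify the resulting limit subspace as a space of first eigenforms of $(M_1,g_1)$ via the min-max principle. First I would apply Corollary $\ref{cor:psi-conv}$ iteratively, extracting by a finite diagonal procedure a common subsequence $\{\e_i\}$ along which, for every $j=1,\ldots,k,$ the cut-off sequence $\psi_{j,\e_i} := \chi_{\e_i}\,\vphi^1_{j,\e_i}$ converges weakly in $H^1(\Lambda^p M_1, g_1)$ and strongly in $L^2$ to a non-zero smooth eigenform $\psi_j$ satisfying $\overline{\Delta}_{g_1}\psi_j = \mu_j \psi_j,$ where
\begin{equation*}
  \mu_j := \lim_{i \to \infty} \overline{\lambda}^{(p)}_j(M, g_{\e_i}).
\end{equation*}
Monotonicity of the eigenvalues passes to this common limit, so $\mu_1 \le \cdots \le \mu_k,$ and Lemma $\ref{lem:upper-bound}$ gives $\mu_j \le \overline{\lambda}^{(p)}_j(M_1, g_1)$ for each $j.$

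Next I would pass to the limit in the orthonormality relation. Decomposing along the connected sum,
\begin{equation*}
  \delta_{jl} = (\vphi^1_{j,\e_i}, \vphi^1_{l,\e_i})_{L^2(M_1(\e_i), g_1)} + (\vphi^2_{j,\e_i}, \vphi^2_{l,\e_i})_{L^2(M_2(1), g_2)},
\end{equation*}
Lemma $\ref{lem:phi^2=0}$ kills the $M_2$-contribution in the limit, while the first term differs from $(\psi_{j,\e_i}, \psi_{l,\e_i})_{L^2(M_1, g_1)}$ only by an integral over the shrinking region where $\chi_{\e_i} \neq 1;$ a Cauchy-Schwarz estimate using the uniform $H^1$-bound of Lemma $\ref{lem:H^1-bdd}$ shows that this error vanishes. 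Combined with the strong $L^2$-convergence $\psi_{j,\e_i} \to \psi_j,$ this yields $(\psi_j, \psi_l)_{L^2(M_1, g_1)} = \delta_{jl},$ so the family $\{\psi_1, \ldots, \psi_k\}$ is orthonormal and spans a $k$-dimensional subspace of $H^1(\Lambda^p M_1, g_1).$

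To close the argument I would invoke the min-max characterization on $(M_1, g_1)$ with the nested test spaces $V_j := \mathrm{span}\{\psi_1, \ldots, \psi_j\}$: for any $\vphi = \sum_{l=1}^j c_l \psi_l \in V_j,$ the Rayleigh quotient satisfies
\begin{equation*}
  \frac{\|\nabla \vphi\|^2_{L^2(M_1, g_1)}}{\|\vphi\|^2_{L^2(M_1, g_1)}} = \frac{\sum_{l=1}^j \mu_l\, |c_l|^2}{\sum_{l=1}^j |c_l|^2} \le \mu_j,
\end{equation*}
hence $\overline{\lambda}^{(p)}_j(M_1, g_1) \le \mu_j,$ which together with the reverse inequality already obtained forces $\mu_j = \overline{\lambda}^{(p)}_j(M_1, g_1)$ for every $j=1,\ldots,k.$ Since every subsequence of $\{\overline{\lambda}^{(p)}_k(M, g_\e)\}$ has a further subsequence converging to this common value, the full limit exists and equals $\overline{\lambda}^{(p)}_k(M_1, g_1),$ proving Theorem $\ref{thm:rough-conv}.$

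The step I expect to demand the most care is the passage to the limit in the orthogonality identity, specifically controlling how the cut-off $\chi_{\e_i}$ distorts the inner product near the singular point $x_1;$ this is a small quantitative refinement of the estimates already performed in Lemmas $\ref{lem:boundary-value-estimate}$ and $\ref{lem:H^1-bdd},$ and should not introduce any new difficulty beyond what is implicit in Corollary $\ref{cor:psi-conv}.$
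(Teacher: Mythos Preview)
Your proposal is correct and follows essentially the same route as the paper: both establish $L^2(M_1,g_1)$-orthonormality of the limits $\psi_j$ by decomposing the inner product along the connected sum, killing the $M_2$-contribution via Lemma~\ref{lem:phi^2=0}, and controlling the cut-off error on the shrinking annulus through the boundary-value estimate of Lemma~\ref{lem:boundary-value-estimate} (the paper records this as the estimate $\|(\chi_{\e_i}-1)\vphi_{\e_i}\|_{L^2}\to 0$). You spell out the final min-max identification $\mu_j=\overline{\lambda}^{(p)}_j(M_1,g_1)$ more explicitly than the paper, which simply notes that the orthonormal limits lie in the spectrum and leaves the counting argument implicit.
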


\begin{proof}We first calculate:
\begin{equation} \label{eq:chi-1=0}
\begin{split}
 \| (\chi_{\e_i} -1) \vphi_{\e_i} \|^2_{L^2 (M_1(\e),g_1) }
  &\le \dint^{\sqrt{\e_i}}_{\e_i} \dint_{{\Bbb{S}^{m-1}}}
         |\vphi_{\e_i} |^2_{g} \, r^{m-1} dr d \mu_{h}   \\[0.2cm]
  &\le  C
\begin{cases}
  \dfrac{1}{m-2} \, \dint^{\sqrt{\e_i}}_{\e_i} r \, dr \, 
      \| \vphi_{\e_i} \|^2_{H^1(M_1(\e_i), g_1)}  &   \text{if } m \ge 3, \\
  |\log \e_i| \, \dint^{\sqrt{\e_i}}_{\e_i} r \, dr \, 
      \| \vphi_{\e_i} \|^2_{H^1(M_1(\e_i), g_1)}  &   \text{if } m=2
\end{cases}   \\[0.2cm]
  &\le C
\begin{cases}
  \dfrac{\e_i}{m-2} \, \| \vphi_{\e_i} \|^2_{H^1(M_1(\e_i), g_1)}
        &   \text{if } m \ge 3, \\
  |\log \e_i| \e_i  \, \| \vphi_{\e_i} \|^2_{H^1(M_1(\e_i), g_1)}
        &   \text{if } m=2
\end{cases}  \\[0.2cm]
  &\longrightarrow 0  \quad (i \to \infty).
\end{split}
\end{equation}
Then, from $\dlim_{i \to \infty} \vphi^2_{j, \e_i} = 0$ by Lemma \ref{lem:phi^2=0} 
 and \eqref{eq:chi-1=0}, it follows that for all $j,l=1,\dots,k,$
\begin{equation*} 
\begin{split}
 (\psi_j, \psi_l)_{L^2(M_1, g_1)} 
   &= \lim_{i \to \infty} ( \chi_{\e_i} \vphi_{j, \e_i}, \chi_{\e_i} \vphi_{l, \e_i}
        )_{ L^2(M_1, g_1) }  \\
   &= \lim_{i \to \infty} \Big\{ (\vphi^1_{j, \e_i}, \vphi^1_{l, \e_i}
        )_{ L^2(M_1(\e_i), g_1) } +  ( (\chi^2_{\e_i} -1) \vphi^1_{j, \e_i},
       \vphi^1_{l, \e_i})_{ L^2(M_1(\e_i), g_1) } \Big\}  \\
   &= \lim_{i \to \infty} \Big\{ (\vphi^1_{j, \e_i}, \vphi^1_{l, \e_i}
        )_{ L^2(M_1(\e_i), g_1) } + (\vphi^2_{j, \e_i}, \vphi^2_{l, \e_i}
        )_{ L^2(M_2(1), g_2) }  \Big\}  \\
   &\qquad  + \lim_{i \to \infty} ( (\chi^2_{\e_i} -1) \vphi^1_{j, \e_i},
       \vphi^1_{l, \e_i})_{ L^2(M_1(\e_i), g_1) } \\
   &=  \lim_{i \to \infty} (\vphi_{j, \e_i}, \vphi_{l, \e_i} )_{ L^2(M, g_{\e_i})}
       + \lim_{i \to \infty} ( (\chi^2_{\e_i} -1) \vphi^1_{j, \e_i},
       \vphi^1_{l, \e_i})_{ L^2(M_1(\e_i), g_1) }  \\
   &=  \delta_{j l}.
\end{split}
\end{equation*}
 Here, the last equality follows from \eqref{eq:chi-1=0}.
 Therefore, we conclude that 
 $\overline{\lambda}^{(p)}_j = \dlim_{i \to \infty} \overline{\lambda}^{(p)}_j (M, g_{\e_i})$
 for $j=1,\dots, k$ belong to the set of all eigenvalues of the rough Laplacian 
 acting on $p$-forms on $(M_1, g_1)$.
 Hence, we have finished the proof of Theorem \ref{thm:rough-conv}.
\end{proof}



 \vspace{0.5cm}
\noindent
 \ Colette Ann\'e \\
 \quad  Laboratoire de Math\'ematiques Jean Leray, \\
 \quad  Universit\'e de Nantes, CNRS, Facult\'e des Sciences, \\
 \quad  BP 92208, 44322 Nantes, France. \\
 \quad  colette.anne@univ-nantes.fr

\vspace{0.5cm}
\noindent
 \ Junya Takahashi \\
 \quad  Research Center for Pure and Applied Mathematics, \\
 \quad  Graduate School of Information Sciences, \\
 \quad  T\^{o}hoku University, \\
 \quad  $6$--$3$--$09$ Aoba, Sendai $980-8579$, Japan. \\
 \quad  e-mail: t-junya@tohoku.ac.jp

\end{document}